\newcommand{\BC}{{\mathbb {C}}}
\newcommand{\BQ}{{\mathbb {Q}}}
\newcommand{\CA}{{\mathcal {A}}}
\newcommand{\CC}{{\mathcal {C}}}
\newcommand{\CE}{{\mathcal {E}}}
\newcommand{\CG}{{\mathcal {G}}}
\newcommand{\CH}{{\mathcal {H}}}
\newcommand{\CL}{{\mathcal {L}}}
\newcommand{\CM}{{\mathcal {M}}}
\newcommand{\CN}{{\mathcal {N}}}
\newcommand{\CO}{{\mathcal {O}}}
\newcommand{\CP}{{\mathcal {P}}}
\newcommand{\CR}{{\mathcal {R}}}
\newcommand{\CX}{{\mathcal {X}}}
\newcommand{\RB}{{\mathrm {B}}}
\newcommand{\RC}{{\mathrm {C}}}
\newcommand{\RE}{{\mathrm {E}}}
\newcommand{\RH}{{\mathrm {H}}}
\newcommand{\RI}{{\mathrm {I}}}
\newcommand{\RL}{{\mathrm {L}}}
\newcommand{\RM}{{\mathrm {M}}}
\newcommand{\RN}{{\mathrm {N}}}
\newcommand{\RP}{{\mathrm {P}}}
\newcommand{\RT}{{\mathrm {T}}}
\newcommand{\RU}{{\mathrm {U}}}
\newcommand{\RW}{{\mathrm {W}}}
\newcommand{\RZ}{{\mathrm {Z}}}
\newcommand{\Aut}{{\mathrm{Aut}}}
\newcommand{\GL}{{\mathrm{GL}}}
\newcommand{\Hom}{{\mathrm{Hom}}}
\renewcommand{\Re}{{\mathrm{Re}}}
\newcommand{\rk}{{\mathrm{k}}}
\newcommand{\tr}{{\mathrm{tr}}}
\newcommand{\n}{\mathfrak n}
\renewcommand{\l}{\mathfrak l}
\renewcommand{\rk}{\mathrm k}
\newcommand{\Z}{\mathbb{Z}}
\newcommand{\C}{\mathbb{C}}
\newcommand{\R}{\mathbb R}
\newcommand{\K}{\mathbb{K}}
\newcommand{\A}{\mathbb{A}}
\newcommand{\ve}{{\vee}}
\newcommand{\abs}[1]{\lvert#1\rvert}
\newcommand{\la}{\langle}
\newcommand{\ra}{\rangle}
\newcommand{\be}{\begin {equation}}
\newcommand{\ee}{\end {equation}}
\newcommand{\bee}{\begin {equation*}}
\newcommand{\eee}{\end {equation*}}
\theoremstyle{Theorem}
\theoremstyle{Theorem}
\newtheorem*{theoremA'}{Theorem A'}
\theoremstyle{Theorem}
\theoremstyle{Theorem}
\newtheorem{prp}{Proposition}[section]
\newtheorem{lemp}[prp]{Lemma}
\newtheorem{thmp}[prp]{Theorem}
\theoremstyle{Plain}
\newtheorem{remarkp}[prp]{Remark}
\theoremstyle{Definition}
\newtheorem{dfnp}[prp]{Definition}
\numberwithin{equation}{section}
\begin{document}

	\title[Period Relations]{Period Relations for Rankin-Selberg convolutions for $\GL(n)\times\GL(n)$}
	
	\author[Y. Jin]{Yubo Jin}
	\address{Institute for Advanced Study in Mathematics, Zhejiang University\\
		Hangzhou, 310058, China}\email{yubo.jin@zju.edu.cn}

	\author[J.-S. Li]{Jian-Shu Li}
	\address{Institute for Advanced Study in Mathematics, Zhejiang University\\
		Hangzhou, 310058, China}\email{jianshu@zju.edu.cn}
	
	\author[D. Liu]{Dongwen Liu}
	\address{School of Mathematical Sciences,  Zhejiang University\\
		Hangzhou, 310058, China}\email{maliu@zju.edu.cn}
	
	\author[B. Sun]{Binyong Sun}
	\address{Institute for Advanced Study in Mathematics and New Cornerstone Science Laboratory, Zhejiang University\\
		Hangzhou, 310058, China}\email{sunbinyong@zju.edu.cn}

	\subjclass[2020]{Primary 11F67; Secondary 11F70, 11F75, 22E45}
	\keywords{Deligne's conjecture, L-function, Rankin-Selberg convolution, period relation,  Eisenstein cohomology}  

	\maketitle
	
	\begin{abstract}
    Using the modular symbol approach, we prove the rationality and period relations for certain critical values of Rankin-Selberg L-functions for $\GL(n)\times\GL(n)$ over any number field that contains a CM field.
	\end{abstract}
	
	\tableofcontents
	
	\section{Introduction}

    One of the central problems in number theory is the study of special values of L-functions. Deligne's celebrated conjecture (\cite{D}) predicts the rationality of critical values of L-functions attached to pure motives. In \cite{B}, Blasius established relations between Deligne's periods of certain pure motives and their Artin twists, and further proposed a conjecture on period relations among different critical values of L-functions. The main goal of this paper is to investigate the rationality and period relations for Rankin–Selberg L-functions. Within the framework of the Langlands program, our results are consistent with the conjectures of Deligne and Blasius.
	
The cases of $\GL_n\times\GL_{n-1}$ and $\GL_n\times\GL_n$ are of fundamental importance in Rankin-Selberg theory, and many problems for general Rankin-Selberg convolutions reduce to these two cases. The period relations for the $\GL_n\times\GL_{n-1}$ case have been studied by many authors. See \cite{Sh59, Ma72, Ma73, Ma76, Sh76, Sh77, Sh78, Har83, Hi94, Rag11} for $n=2$, and see \cite{Sch2, Ma98} for $n=3$. For general $n$, many works have been done after the landmark work of Kazhdan, Mazur and Schmidt in \cite{KMS00}, see \cite{Ma05,KS13,Rag1,Rag2,GH,HL,GS20,GL,GHL,LLS, HMN25}. Much less is known for the $\GL_n\times\GL_n$ case; see \cite{Hi94,Sh76,Sh78} for $n=2$ and \cite{Gr} for general $n$. The aim of this paper is to prove some period relations for the $\GL_n\times\GL_n$ case as a continuation of \cite{LLS}. We follow the well-established modular symbol approach used in the aforementioned works. In particular, our treatment of the $\GL_n\times\GL_n$ case follows that of \cite{Hi94, Gr}, and the results of this paper can be viewed as a refinement of those found there.

	\subsection{Representations and coefficient systems}
	
	Let $\rk$ be a number field that contains a CM field. Denote by $\rk_v$ the completion of $\rk$ at a place $v$ with the normalized absolute value $\abs{\cdot}_v$. Write
	\[
	\rk_{\infty}:=\rk\otimes_{\BQ}\R=\prod_{v|\infty}\rk_v\hookrightarrow\rk\otimes_{\BQ}\C=\prod_{\iota\in\CE_{\rk}}\C,
	\]
	where $\CE_{\rk}$ is the set of field embeddings $\iota:\rk\hookrightarrow\C$. For each $\iota\in\CE_{\rk}$, we write $\overline{\iota}$ for its complex conjugation. Write $\mathbb{A}=\mathbb{A}_{\mathrm{f}}\times\rk_{\infty}$ for the adèle ring of $\rk$ where $\mathbb{A}_{\mathrm{f}}$ denotes the ring of finite adèles, and set $\abs{\cdot}_{\A}:=\prod_v\abs{\cdot}_v$.

    Given a positive integer $n$, let $\GL_n$ denote the general linear group over $\rk$ and $1_n$ the identity matrix of size $n$. The center of $\GL_n$ is denoted by $\RZ_n$. Let $\RP_n$ be the  block upper-triangular parabolic subgroup of $\GL_n$ with Levi component $\GL_{n-1} \times \GL_1$.

For each archimedean place $v$, denote by $K_{n,v}$ the standard maximal compact subgroup of $\GL_n(\rk_v)$, which is  a  compact unitary group. Write  \[
\widetilde{K}_{n,v}:=K_{n,v}\cdot\RZ_n(\rk_v)\subset \GL_n(\rk_v).
\]
Denote by $\mathfrak{g}_{n,v}$ the complexified Lie algebra of $\GL_n(\rk_v)$. Set
	\[
	\mathfrak{g}_{n,\infty}:=\bigoplus_{v|\infty}\mathfrak{g}_{n,v},\qquad \widetilde{K}_{n,\infty}:=\prod_{v|\infty}\widetilde{K}_{n,v}.
	\]

  As usual,  a weight
\be\label{writemu}
\mu := \{\mu^{\iota}\}_{\iota \in \CE_{\rk}} := \{(\mu_1^{\iota}, \mu_2^{\iota}, \dots, \mu_n^{\iota})\}_{\iota \in \CE_{\rk}} \in (\Z^n)^{\CE_{\rk}}
\ee
is called dominant if
\[
\mu_1^{\iota} \geq \mu_2^{\iota} \geq \dots \geq \mu_n^{\iota}, \qquad \text{for all } \iota \in \CE_{\rk}.
\]
For such a dominant weight, we denote by $F_{\mu}$ the (unique up to isomorphism) irreducible holomorphic finite-dimensional representation of $\GL_n(\rk \otimes_{\BQ} \C)$ of highest weight $\mu$. 

Let $\Sigma$ be an irreducible smooth automorphic representation of $\GL_n(\A)$ that is cohomological in the following sense: 
there exists a dominant weight $\mu$ as above  such that  the total relative Lie algebra cohomology (see \cite{Clo, HM62, VZ84})
\be\label{totalcoh0}
\RH^*(\mathfrak{g}_{n,\infty},\widetilde{K}_{n,\infty}; F_{\mu}^\vee  \otimes \Sigma_\infty)\neq \{0\}\quad (\textrm{`${}^{\vee}$' indicates the contragredient}),
\ee
where $\Sigma_{\infty} := \widehat{\otimes}_{v \mid \infty} \Sigma_v$ is  the infinite part of $\Sigma$ ($\widehat{\otimes}$ indicates the completed inductive tensor product, see \cite[Definition 43.5]{Tr}). Here $\Sigma_v$ denotes the local $v$-component of $\Sigma$ (similar notation of subscripts `$v$', `$\infty$', and `$\mathrm f$' respectively for the local $v$-component, the infinite part,  and the  finite part will be used without explanation). 
We call $F_{\mu}$ the coefficient system of $\Sigma$. 

Let $\Sigma'$ be another cohomological irreducible smooth automorphic representation of $\GL_n(\A)$, with coefficient system $F_\nu$, where 
\begin{equation}\label{writenu}
\nu := \{\nu^{\iota}\}_{\iota \in \CE_{\rk}} := \{(\nu_1^{\iota}, \nu_2^{\iota}, \dots, \nu_n^{\iota})\}_{\iota \in \CE_{\rk}} \in (\Z^n)^{\CE_{\rk}}
\end{equation}
is another dominant weight. Set $$\Pi := \Sigma \,\widehat{\otimes}\,\Sigma'.$$ It is an   irreducible smooth automorphic representation of $\GL_n(\A) \times \GL_n(\A)$ which is cohomological with coefficient system $F_{\mu} \otimes F_{\nu}$ (in the sense that the obvious analogue of \eqref{totalcoh0} holds for $\Pi$).

	Let $\chi:\rk^{\times}\backslash\A^{\times}\to\C^{\times}$ be a Hecke character. When no confusion occurs, we identify a character of a group with the complex vector space $\C$ equipped with the corresponding action of the group. Denote by $\omega_{\Pi}$ the product of the central characters of $\Sigma$ and $\Sigma'$, and put 
    \be\label{eta}
    \eta:=\omega_{\Pi}\chi^n.
    \ee
    Consider the degenerate principal series representation
	\begin{equation}\label{Ieta}
		I_{\eta}:=\mathrm{Ind}^{\GL_n(\A)}_{\RP_n(\A)}(\abs{\det}_{\A}^{-\frac{1}{2}}\otimes\eta^{-1}\abs{\cdot}_{\A}^{\frac{n-1}{2}})={^{\mathrm{u}}\mathrm{Ind}}^{\GL_n(\A)}_{\RP_n(\A)}(\mathbf{1}\otimes\eta^{-1}),
	\end{equation}
	where $\det$ indicates the determinant character and $\mathbf{1}$ indicates the trivial representation. Here and henceforth, $\mathrm{Ind}$ and ${^{\mathrm{u}}\mathrm{Ind}}$ stand for the normalized and un-normalized smooth parabolic inductions respectively. We have the usual factorizations
    \begin{equation}\label{factorization}
			\chi=\otimes_v\chi_v=\chi_{\mathrm{f}}\otimes\chi_{\infty},\quad 
			\eta=\otimes_v\eta_v=\eta_{\mathrm{f}}\otimes\eta_{\infty},\quad 
			I_{\eta}=\widehat\otimes_v'I_{\eta,v}=I_{\eta,\mathrm{f}}\otimes I_{\eta,\infty}. 
	\end{equation}

	Write
	\[
	\begin{aligned}
		\mathrm{d}\chi:=\mathrm{d}\chi_{\infty}&:=\{\chi_{\iota}\}_{\iota\in\CE_{\rk}}\in\rk\otimes_{\BQ}\C=\C^{\CE_{\rk}},\\	
		\mathrm{d}\eta:=\mathrm{d}\eta_{\infty}&:=\{\eta_{\iota}\}_{\iota\in\CE_{\rk}}\in\rk\otimes_{\BQ}\C=\C^{\CE_{\rk}},
	\end{aligned}
	\]
	for the complexified differentials of $\chi_{\infty}$ and $\eta_{\infty}$,  respectively. We assume $\chi$ is algebraic in the sense that $\mathrm{d}\chi\in\Z^{\CE_{\rk}}$. Denote by $F_{\chi}$ the holomorphic character of $\GL_n(\rk\otimes_{\BQ}\C)$ extending the character $\chi_{\infty}\circ\det:\GL_n(\rk_{\infty})\to\C^{\times}$. Note that $\eta$ is also algebraic with
	\[
	\eta_{\iota}=\sum_{i=1}^n(\mu_i^{\iota}+\nu_i^{\iota})+n\chi_{\iota},\qquad\text{for all }\iota\in\CE_{\rk}.
	\]

The representation  $I_{\eta,\infty}$ is said to be regular algebraic if its infinitesimal character coincides with that of an irreducible holomorphic finite-dimensional representation of $\GL_n(\rk \otimes_{\BQ} \C)$. This finite-dimensional representation is called the coefficient system of  $I_{\eta,\infty}$ and is denoted by $F_{\eta}$.

  We say that the algebraic Hecke character $\chi$ is $(\mu,\nu)$-balanced if $I_{\eta,\infty}$ is regular algebraic and
       \begin{equation}\label{balanced}
		\left(F_{\mu}\otimes F_{\nu}\otimes F_{\eta}\otimes F_{\chi}\right)^{\GL_n(\rk\otimes_{\BQ}\C)}\neq\{0\}\qquad (\textrm{the invariant space}).
	\end{equation}

    It is easy to see that $I_{\eta,\infty}$ is regular algebraic if and only if 
	\begin{equation}\label{regular}
\eta_{\iota}(\eta_{\iota}-n)\geq 0, \qquad \textrm{for all $\iota\in\mathcal{E}_{\rk}$.}
	\end{equation}
    In this paper, we further impose the condition
	\begin{equation}\label{CM}
		\min\{\eta_{\iota},\eta_{\overline{\iota}}\}\leq 0\quad\text{and}\quad\max\{\eta_{\iota},\eta_{\overline{\iota}}\}\geq n,\quad \textrm{for all $\iota\in\CE_{\rk}$},  
	\end{equation}
	which is labelled as Case $(\pm)$ in \cite{local}. Note that \eqref{CM} implies the inequalities in \eqref{regular} and can happen only when $\rk$ contains a CM field, as assumed from the beginning. Denote by $\mathrm{B}(\mu,\nu)$ the set of all such $(\mu,\nu)$-balanced characters that also satisfy \eqref{CM}.

	We say that an algebraic Hecke character $\chi$ is $(\mu,\nu)$-critical if $s=0$ is not a pole of the archimedean local L-function $\RL(s,\Pi_{\infty}\times\chi_{\infty})$ or $\RL(1-s,\Pi_{\infty}^{\vee}\times\chi_{\infty}^{-1})$. Here $\RL(s,\Pi_{\infty}\times\chi_{\infty}):=\RL(s,\Sigma_{\infty}\times\Sigma_{\infty}'\times\chi_{\infty})$ denotes the archimedean Rankin-Selberg L-function, and similar notations apply to other (local or global) Rankin-Selberg L-functions. Denote by $\mathrm{C}(\mu,\nu)$ the set of all $(\mu,\nu)$-critical characters. It is shown in \cite[Lemma 2.7]{local} that $\RB(\mu,\nu)\subset\mathrm{C}(\mu,\nu)$. The method of the present paper does not apply to the characters in $\mathrm{C}(\mu,\nu)\setminus\RB(\mu,\nu)$. 
	
	\begin{remarkp}
		Denote by $\RB(\mu,\nu)^{\natural}$ (resp. $\RC(\mu,\nu)^{\natural}$) the set of all $(\mu,\nu)$-balanced characters (resp. $(\mu,\nu)$-critical characters) of the form $\chi=\chi_0\abs{\cdot}_{\A}^{t}$, where $t\in\Z$ and $\chi_0$ is a finite order character of $\rk^\times\backslash \A^\times$. Then
        \[
        \RB(\mu,\nu)^{\natural}\neq\emptyset\quad \Rightarrow \quad \RB(\mu,\nu)^{\natural}=\RC(\mu,\nu)^{\natural}.
        \]
	\end{remarkp}

	\subsection{The main theorem}

   We investigate the special value $\RL(0,\Pi \times \chi)$ when $\chi \in \RB(\mu,\nu)$ via the method of modular symbols. Throughout this paper, all global L-functions are taken to be completed. Let $\rk_1$ be the maximal CM subfield of $\rk$. Fix a CM type of $\rk_1$, that is, for every archimedean place $w$ of $\rk_1$, we fix a field embedding $\iota_w : \rk_{1} \hookrightarrow \C$ that induces $w$.  
   This determines a CM type of $\rk$ such that for every archimedean place $v$ of $\rk$ over $w$ we have a continuous field embedding 
   $\iota_v: \rk_v\hookrightarrow \C$ with $\iota_v|_{\rk_1} = \iota_w$. Then $\CE_{\rk_v} := \{\iota_v, \overline{\iota}_v\}$ is the set of topological isomorphisms $\rk_{v} \xrightarrow{\sim} \C$, which is naturally viewed as a subset of $\CE_{\rk}$. Here $\overline{\iota}_v$ denotes the complex conjugate of $\iota_v$. 
  
  For all $\chi \in \RB(\mu,\nu)$ and all archimedean places $v$ of $\rk$, we set \[
   \varepsilon_{\Pi,\chi,v}:=\begin{cases}
       1,\qquad & \textrm{if $\eta_{\iota_v} \leq 0$};\\
       -1,\qquad & \textrm{if $\eta_{\iota_v} \geq n$}.
   \end{cases}
   \]
   Denote $\mathrm{i}:=\sqrt{-1}\in \C$. 
Define
	\begin{equation}\label{omegainfty}
    \begin{aligned}
&\Omega_{\infty}(\Pi,\chi):=\delta_{\infty}(\omega_{\Pi}\chi^n)^{n-1}\cdot\prod_{v|\infty}\prod_{i+k\leq n}(\varepsilon_{\Pi,\chi,v}\cdot\mathrm{i})^{\sum_{\iota\in\CE_{\rk_v}}(\mu_i^{\iota}+\nu_k^{\iota}+\chi_{\iota})-1}\\
\times&\prod_{\iota\in \CE_\rk, \, \eta_\iota\geq n}\left(\prod_{i=1}^n(-1)^{\mu_i^{\iota}+n(\nu_i^{\iota}-1)+\chi_\iota}\cdot\prod_{i>k,\,i+k\leq n}(-1)^{\mu_i^{\bar\iota}+\nu_k^{\bar\iota}+\mu_{n+1-i}^{\iota}+\nu_{n+1-k}^{\iota}+\chi_{\bar\iota}+\chi_{\iota}}\right),
    \end{aligned}
			\end{equation}
            where 
            $\delta_{\infty}(\omega_{\Pi}\chi^n)$ is defined in \eqref{omegaeta}. 
   
     Note that $\chi \in \RB(\mu,\nu)$ implies that the character $\eta:=\omega_{\Pi} \chi^n$ is critical, namely $s=0$ is not a pole of $\RL(s,\eta_{\infty})$ or $\RL(1-s,\eta_\infty^{-1})$. Our main theorem is stated as follows.

	\begin{thmp}\label{mainthm}
		Let $\Pi=\Sigma\,\widehat{\otimes}\,\Sigma'$ be an irreducible smooth automorphic representation of $\GL_n(\A)\times\GL_n(\A)$ that is cohomological with coefficient system $F_{\mu}\otimes F_{\nu}$. Assume that $\Sigma$ is tamely isobaric (Definition \ref{def:isobaric}) with generic infinite part, and $\Sigma'$ is cuspidal. Then 
			\begin{equation}\label{thmquotient}
			\frac{\RL(0,\Pi\times\chi)}{\mathrm{c}(\omega_{\Pi}\chi^n)\cdot
				\CG(\chi)^{\frac{n(n-1)}{2}}\cdot\Omega_{\infty}(\Pi,\chi)\cdot\Omega(\Pi)}\in\BQ(\Pi,\chi)
		\end{equation}
        for all  $\chi\in\RB(\mu,\nu)$, where
		\begin{itemize}
			\item $\BQ(\Pi,\chi)$ is the compositum of the rationality fields of $\Pi$ and $\chi$;
			\item $\omega_\Pi$ is the product of the central characters of $\Sigma$ and $\Sigma'$;
			\item $\CG(\chi)\in\C^{\times}$ is the Gauss sum of $\chi$ defined in 
			\eqref{globalgauss};
            \item $\mathrm{c}(\omega_{\Pi}\chi^n):=\mathrm{c}^+(\omega_{\Pi}\chi^n)\cdot\RL(0,\omega_{\Pi_{\infty}}\chi_{\infty}^n)$, with
            $\mathrm{c}^+(\omega_{\Pi}\chi^n)\in\C^{\times}$ the Deligne period associated to $\omega_{\Pi}\chi^n$ defined in \eqref{cplus} and $\omega_{\Pi_{\infty}}$ the archimedean component of $\omega_\Pi$;
			\item $\Omega(\Pi)\in\C^{\times}$ is the Whittaker period of $\Pi$ defined in Section \ref{sec:whittaker}. 
		\end{itemize}
		Moreover, the quotient \eqref{thmquotient} is $\mathrm{Aut}(\C)$-equivariant in the sense that
		\begin{equation}\label{thmsigma}
			\begin{aligned}
				&\sigma\left(\frac{\RL(0,\Pi\times\chi)}{\mathrm{c}(\omega_{\Pi}\chi^n)\cdot
					\CG(\chi)^{\frac{n(n-1)}{2}}\cdot\Omega_{\infty}(\Pi,\chi)\cdot\Omega(\Pi)}\right)\\
				=&	\frac{\RL(0,{^{\sigma}\Pi}\times{^{\sigma}\chi})}{\mathrm{c}(\omega_{{^{\sigma}\Pi}}\,{^{\sigma}\chi}^n)\cdot
					\CG({^{\sigma}\chi})^{\frac{n(n-1)}{2}}\cdot\Omega_{\infty}({^{\sigma}\Pi},{^{\sigma}\chi})\cdot\Omega({^{\sigma}\Pi})},
			\end{aligned}
		\end{equation}
        for all $\sigma\in\mathrm{Aut}(\C)$. Here ${^{\sigma}\chi}$ is the usual $\sigma$-twist of $\chi$, and the $\sigma$-twist ${^{\sigma}\Pi}$ of $\Pi$ is defined in Section \ref{sec:sigmapi}.
	\end{thmp}

In view of the fact that the sum $\eta_{\iota} + \eta_{\overline{\iota}}$ is independent of $\iota\in \CE_\rk$ by the purity lemma (\cite{We55} and \cite[Lemma 7]{R-Hecke}), we define 
\[
\RB(\mu,\nu)^+ := \{\chi \in \RB(\mu,\nu) : \eta_{\iota} + \eta_{\overline{\iota}} \geq n\}\subset\RB(\mu,\nu).
\]
We will prove Theorem \ref{mainthm} for $\chi\in\RB(\mu,\nu)^+$ using the modular symbol approach, and the remaining case will be proved  by applying the functional equation.
    

    We remark that Theorem \ref{mainthm} is compatible with \cite{R-imaginary}, where the rationality of the quotient of successive critical L-values is studied, and with \cite{RS08}, where the period relations for Whittaker periods are established in the spirit of \cite{B}. When $n = 1$, we have $\Omega(\Pi) = 1$, and Theorem \ref{mainthm} reduces to Deligne's conjecture for Hecke characters, which was completely proved in \cite{Ku24}. We therefore assume $n \geq 2$ in the rest of the paper. When $n = 2$, the result was obtained in \cite{Hi94}. For general $n$, a form of the rationality result \eqref{thmquotient} was obtained by Grenié in \cite[Theorem 2]{Gr}, without the explicit determination of the constants $\CG(\chi)^{\frac{n(n-1)}{2}}$ and $\Omega_{\infty}(\Pi,\chi)$. The appearance of the term $\CG(\chi)^{\frac{n(n-1)}{2}}$ is  due to \cite{RS08}. The determination of the term $\Omega_{\infty}(\Pi,\chi)$, which relies on the archimedean period relation established in \cite{local}, is one of the two main contributions of the present article.

     Compared with the $\GL_n\times\GL_{n-1}$ case, the main difficulty for the $\GL_n\times\GL_n$ case lies in proving the rationality of certain Eisenstein cohomology spaces (Theorem \ref{thm:Eis}), which is the other main contribution of the present article. The proof  follows the general strategy of Harder (\cite{Har87, Har90}) and the approach of  Grenié (\cite[Proposition 12]{Gr}). The rationality of the `restriction map to boundary cohomology' is a crucial step in Harder's strategy, which is missing from the proof of \cite[Proposition 12]{Gr}. The main goal of Sections \ref{sec:2}–\ref{sec:5} is to complete this step (see Proposition \ref{mainprop}).  We will introduce the result on Eisenstein cohomology in more detail in Section \ref{intro:Eis}.

    We make two improvements over the result in \cite{LLS}. One is that we drop the assumption that $\chi\in\RB(\mu,\nu)^{\natural}$ and the other is that the cohomological tamely isobaric representations considered in the current paper are more general than those treated in \cite{LLS}. Both improvements can also be obtained for the $\GL_n\times\GL_{n-1}$ case without difficulty. We mention that allowing $\Sigma$ to be isobaric 
    is a common idea for deducing rationality results for general Rankin-Selberg L-functions from known ones. See \cite{Sch2,Ma98,Ma05,GH,HL,GS20,GL,GHL,Chen}. All these works carry out induction arguments starting from the rationality results for $\GL_n\times\GL_{n-1}$ Rankin-Selberg L-functions. We expect that our result for the $\GL_n\times\GL_n$ case may serve as another starting point for studying general Rankin-Selberg L-functions by induction.

	\subsection{The Eisenstein cohomology}\label{intro:Eis}

	We now introduce our main result on rationality of the Eisenstein cohomology.

Let $\eta:\rk^{\times}\backslash\A^{\times}\to\C^{\times}$ be an algebraic Hecke character such that
\begin{equation}\label{etaassumption}
	\min\{\eta_{\iota},\eta_{\overline{\iota}}\}\leq 0
    \quad\textrm{and}\quad \eta_{\iota}+\eta_{\overline{\iota}}\geq n
\end{equation}
for all $\iota\in\CE_{\rk}$. Note that \eqref{etaassumption} implies \eqref{CM}.  Let $I_{\eta}$ be the induced representation defined in \eqref{Ieta} with coefficient system  $F_\eta$ as before. For every $q\in\Z$, we consider the relative Lie algebra cohomology
	\[
	\RH^q(\mathfrak{g}_{n,\infty},\widetilde{K}_{n,\infty};I_{\eta}\otimes F^\vee_{\eta}).
	\]
    There is a canonical isomorphism
	\begin{equation}\label{canI}
		\begin{aligned}	&\imath_{\mathrm{can}}:I_{\eta,\mathrm{f}}\otimes\RH^q(\mathfrak{g}_{n,\infty},\widetilde{K}_{n,\infty};I_{\eta,\infty}\otimes F^\vee_{\eta})\,\xrightarrow{\sim}\RH^q(\mathfrak{g}_{n,\infty},\widetilde{K}_{n,\infty};I_{\eta}\otimes F^\vee_{\eta}).
		\end{aligned}
	\end{equation}
	As a consequence of Delorme's lemma (\cite[Theorem III.3.3]{BW}) we have that 
	\[
\RH^q(\mathfrak{g}_{n,\infty},\widetilde{K}_{n,\infty};I_{\eta,\infty}\otimes F^\vee_{\eta})=\{0\}\qquad\text{ for all }q<c_n:=\frac{[\rk:\BQ]}{2}\cdot(n-1), 
	\]
	and
	\be\label{dimhcn}
	\mathrm{dim}\,\RH^{c_n}(\mathfrak{g}_{n,\infty},\widetilde{K}_{n,\infty};I_{\eta,\infty}\otimes F^\vee_{\eta})=1.
	\ee
	
	Define 
	\[
	\mathcal{X}_n:=\GL_n(\rk)\backslash\GL_n(\A)/\widetilde{K}_{n,\infty}.
	\]
	For every open compact subgroup $K_{\mathrm{f}}$ of $\GL_n(\A_{\mathrm{f}})$, the representation $F^\vee_{\eta}$ defines a sheaf on $\mathcal{X}_n/K_{\mathrm{f}}$ as usual, which is still denoted by $F^\vee_{\eta}$. For $q\in\Z$, consider the sheaf cohomology space
	\[
	\RH^q(\mathcal{X}_n,F^\vee_{\eta}):=\lim_{\substack{\longrightarrow\\K_{\mathrm{f}}}}\RH^q(\mathcal{X}_n/K_{\mathrm{f}},F^\vee_{\eta}),
	\]
	where $K_{\mathrm{f}}$ runs over the directed system of open compact subgroups of $\GL_n(\A_{\mathrm{f}})$.

The Eisenstein series \eqref{Eisenstein} defines a $\GL_n(\A)$-equivariant continuous linear embedding (see Proposition \ref{prp-Eisenstein})
	\[
	\mathrm{Eis}_{\eta}:I_{\eta}\to\mathcal{A}(\GL_n(\rk)\backslash\GL_n(\A)),
	\]
	where $\mathcal{A}(\GL_n(\rk)\backslash\GL_n(\A))$ denotes the space of  smooth automorphic forms. On the cohomological level, this induces for each $q\in\Z$ an Eisenstein map
    \begin{eqnarray}\label{Eis}
\nonumber \mathrm{Eis}_{\eta}:\RH^q(\mathfrak{g}_{n,\infty},\widetilde{K}_{n,\infty};I_{\eta}\otimes F^\vee_{\eta})&\to &\RH^q(\mathfrak{g}_{n,\infty},\widetilde{K}_{n,\infty};\mathcal{A}(\GL_n(\rk)\backslash\GL_n(\A))\otimes F^\vee_{\eta})\\
  &\xrightarrow{\sim} & \RH^q(\mathcal{X}_n,F^\vee_{\eta}).
\end{eqnarray}

	For every $\sigma\in\mathrm{Aut}(\C)$, there is a unique Hecke character ${^{\sigma}\eta}={^{\sigma}\eta_{\mathrm{f}}}\otimes{^{\sigma}\eta_{\infty}}$ with ${^{\sigma}\eta_{\mathrm{f}}}=\sigma\circ\eta_{\mathrm{f}}$ and infinity type 
	\[
	\mathrm{d}{^{\sigma}\eta}:=\{{^{\sigma}\eta}_{\iota}\}_{\iota\in\mathcal{E}_{\rk}}=\{\eta_{\sigma^{-1}\circ\iota}\}_{\iota\in\mathcal{E}_{\rk}}.
	\]
    The purity lemma implies that the character ${^{\sigma}\eta}$ also satisfies \eqref{etaassumption}.
	The preceding discussion applies equally to ${^{\sigma}\eta}$.
    
    The $\sigma$-linear isomorphism 
	\[
	\sigma:F^\vee_{\eta}\to F^\vee_{{^{\sigma}\eta}}
	\]
	defined in \eqref{sigmaF} induces a $\sigma$-linear map
	\begin{equation}
		\sigma:\RH^q(\mathcal{X}_n,F^\vee_{\eta})\to\RH^q(\mathcal{X}_n,F^\vee_{{^{\sigma}\eta}}).
	\end{equation}
	On the other hand, we have a $\sigma$-linear isomorphism
	\be\label{sigmai}
	\sigma:I_{\eta,{\mathrm{f}}}\to I_{{^{\sigma}\eta},{\mathrm{f}}},\qquad \varphi\mapsto\sigma\circ\varphi
	\ee
	for the induced representations,  and we will define a $\sigma$-linear isomorphism
	\be\label{sigmah}
	\sigma:	\RH^{c_n}(\mathfrak{g}_{n,\infty},\widetilde{K}_{n,\infty};I_{\eta,\infty}\otimes F^\vee_{\eta})\to	\RH^{c_n}(\mathfrak{g}_{n,\infty},\widetilde{K}_{n,\infty};I_{{^{\sigma}\eta},\infty}\otimes F^\vee_{{^{\sigma}\eta}})
	\ee
	in \eqref{sigmaI}. See Section \ref{sec:lie} for details. In view of the canonical isomorphism \eqref{canI}, the $\sigma$-linear isomorphisms \eqref{sigmai} and \eqref{sigmah} yield a $\sigma$-linear isomorphism
	\[
	\sigma:	\RH^{c_n}(\mathfrak{g}_{n,\infty},\widetilde{K}_{n,\infty};I_{\eta}\otimes F^\vee_{\eta})\to	\RH^{c_n}(\mathfrak{g}_{n,\infty},\widetilde{K}_{n,\infty};I_{{^{\sigma}\eta}}\otimes F^\vee_{{^{\sigma}\eta}}).
	\]

 Following the general strategy of Harder (\cite{Har87, Har90}; see also \cite[Proposition 12]{Gr}), 
 we will prove the following theorem on the rationality of Eisenstein cohomology.

	\begin{thmp}\label{thm:Eis}
		Keep the above notation. The Eisenstein map $\mathrm{Eis}_{\eta}$ is $\mathrm{Aut}(\C)$-equivariant. That is, the diagram
		\begin{equation}\label{maindiagram}
			\begin{CD}
				\RH^{c_n}(\mathfrak{g}_{n,\infty},\widetilde{K}_{n,\infty};I_{\eta}\otimes F^\vee_{\eta}) @>\mathrm{Eis}_{\eta}>> \RH^{c_n}(\mathcal{X}_n,F^\vee_{\eta})\\
				@V\sigma VV @VV\sigma V\\
				\RH^{c_n}(\mathfrak{g}_{n,\infty},\widetilde{K}_{n,\infty};I_{^{\sigma}\eta}\otimes F^\vee_{^{\sigma}\eta}) @>\mathrm{Eis}_{^{\sigma}\eta}>>\RH^{c_n}(\mathcal{X}_n,F^\vee_{^{\sigma}\eta})
			\end{CD}
		\end{equation}
		commutes for every $\sigma\in\mathrm{Aut}(\C)$. 
	\end{thmp}
	
	As an application of Theorem \ref{thm:Eis}, we provide another proof of Deligne's conjecture for Hecke characters in \cite{Deligne} following a strategy initiated by Harder and Schappacher. To this end, we emphasize that the proof based on Theorem \ref{thm:Eis} is independent of \cite{Ku24}.

    \section{The Eisenstein series and constant terms}\label{sec:2}

    Sections \ref{sec:2}--\ref{sec:5} are devoted to proving Theorem \ref{thm:Eis} on the rationality of the Eisenstein cohomology. In this section, we review the Eisenstein series defined by degenerate principal series representations and compute their constant terms.

    Let $\RB_n = \RT_n \RN_n$ be the Borel subgroup of $\GL_n$ consisting of upper-triangular matrices, where $\RT_n$ is the diagonal torus and $\RN_n$ is its unipotent radical. Analogously, $\overline{\RB}_n = \RT_n \overline{\RN}_n$ is the opposite Borel subgroup of lower-triangular matrices with unipotent radical $\overline{\RN}_n$.

	\subsection{The Eisenstein series}	\label{sec:Eis}

    Let $\eta : \rk^{\times} \backslash \A^{\times} \to \C^{\times}$ be an algebraic Hecke character satisfying \eqref{etaassumption}. For every $s \in \C$, we set $\eta_s := \eta \abs{\cdot}_{\A}^{ns}$. Recall the assumption that $n \geq 2$. We consider the degenerate principal series representation
\begin{equation}\label{Ietas}
I_{\eta,s} := \mathrm{Ind}_{\RP_n(\A)}^{\GL_n(\A)} \bigl( \abs{\cdot}_{\A}^{-\frac{1}{2}} \otimes \eta_s^{-1} \abs{\cdot}_{\A}^{\frac{n-1}{2}} \bigr) = {}^{\mathrm{u}}\mathrm{Ind}_{\RP_n(\A)}^{\GL_n(\A)} (\mathbf{1} \otimes \eta_s^{-1}).
\end{equation}
When specialized to $s = 0$, this is the representation $I_{\eta} = \widehat{\otimes}_{v}' I_{\eta,v}$ from the Introduction.

Denote by $K_n := \GL_n(\widehat{\CO}) \cdot K_{n,\infty}$ a maximal compact subgroup of $\GL_n(\A)$, where $\widehat{\CO}$ is the profinite completion of the ring of integers $\CO$ of $\rk$. For every $\varphi \in I_{\eta}$ and $s \in \C$, we define
\be\label{varphis}
\varphi_s(g) := |p_{n,n}|_{\A}^{-ns} \cdot \varphi(g),
\ee
where $g = pk \in \RP_n(\A) K_n$ with $p = [p_{i,j}]_{1 \leq i,j \leq n} \in \RP_n(\A)$ and $k \in K_n$. Then clearly $\varphi_s \in I_{\eta,s}$. We form the Eisenstein series
\begin{equation}\label{Eisensteinseries}
\RE(g; \varphi_s, \eta) = \sum_{\gamma \in \RP_n(\rk) \backslash \GL_n(\rk)} \varphi_s(\gamma g).
\end{equation}
The series converges absolutely for $\mathrm{Re}(s)$ (the real part) sufficiently large and admits a meromorphic continuation to the whole complex plane. See \cite{BL24, La2, Lap08, MW} for the general theory of Eisenstein series.

    \subsection{Intertwining operators} \label{sec2.2}
	
	Let $\psi_{\R}:\R\to\C^{\times}$ be the unitary character defined by $x\mapsto e^{2\pi\mathrm{i}x}$. Fix an additive character of $\rk\backslash\A$ as the composition
	\be \label{additivechar}
	\psi:\rk\backslash\A\xrightarrow{\tr_{\rk/\BQ}}\BQ\backslash\A_{\BQ}\xrightarrow{\textrm{quotient map}}\BQ\backslash\A_{\BQ}/\widehat{\Z}=\Z\backslash\R\xrightarrow{\psi_{\R}}\C^{\times},
	\ee
	where $\mathbb{A}_{\BQ}$ is the adèle ring of $\BQ$; $\tr_{\rk/\BQ}$ is the trace map; and $\widehat{\Z}$ is the profinite completion of $\Z$. On $\A$ we choose the self-dual measure with respect to $\psi$ so that $\rk\backslash\A$ has total volume $1$. For each place $v$, we fix a Haar measure $\mathrm{d}x_v$ on $\rk_v$ such that
	\begin{itemize}
		\item $\CO_v$ has volume $1$ if $v$ is non-archimedean, where $\CO_v$ denotes the ring of integers of $\rk_v$; and
		\item the product of all the measures $\{\mathrm{d}x_v\}_v$ equals the self-dual Haar measure on $\mathbb{A}$.
	\end{itemize}
For each algebraic subgroup $\RN'$ of $\RN_n$ that is stable under the adjoint action of $\RT_n$, the Haar measure on $\RN'(\A)$ (resp. $\RN'(\rk_v)$) is fixed to be the product of the Haar measures on $\A$ (resp. $\rk_v$). 

Denote by $\RW_n$ the (relative) Weyl group of $\GL_n$ with respect to $\RT_n$, which is also identified with the group of permutation matrices in $\GL_n(\rk)$. Let $k=1,2,\dots,n$. Define 
a permutation matrix
	\[
	w_k=\begin{bmatrix}
		1_{k-1} & & \\
		& & 1\\
		&1_{n-k} & 
	\end{bmatrix}\in \RW_n\subset \GL_n(\rk).
	\]
For all $s\in \C$, define a character
	\begin{eqnarray*}
		\Lambda_{\eta,s}^{(k)}: & \RT_n(\A)& \to \C, \\
		&\mathrm{diag}\left(t_1,t_2,\dots,t_n\right)&\mapsto \eta^{-1}(t_k)\cdot|t_k|_{\A}^{-ns-k+n} \cdot|t_{k+1}|^{-1}_{\A}\cdots|t_n|^{-1}_{\A},
	\end{eqnarray*}
	and a principal series representation
	\begin{equation} \label{IB}
		I_{\eta,s}^{\RB,(k)}:={^{\mathrm{u}}\mathrm{Ind}_{\RB_n(\A)}^{\GL_n(\A)}}\Lambda_{\eta,s}^{(k)}. 
	\end{equation}
	We simply denote \[
    \Lambda_{\eta,s}:=\Lambda_{\eta,s}^{(n)}\quad\textrm{ and }\quad I_{\eta,s}^{\RB}:=I_{\eta,s}^{\RB,(n)}.
    \]
    Note that  
	$$I_{\eta,s}\subset I_{\eta,s}^{\RB}.$$

    Define the   global intertwining operator
	\begin{equation}\label{gi}
		\begin{aligned}
			\mathcal{M}(s,w_k):I_{\eta,s}&\to I_{\eta,s}^{\RB,(k)},\\
			\varphi_s&\mapsto \left(g\mapsto \int_{\RN_n(\A)\cap w_k\overline{\RN}_n(\A)w_k^{-1}}\varphi_s(w_k^{-1}ug)\mathrm{d}u\right),
		\end{aligned}
	\end{equation}
   and for every place $v$ of $\rk$ define   the local 
    intertwining operator
	\begin{equation}\label{gi2}
		\begin{aligned}
			\mathcal{M}_v(s,w_k):I_{\eta,s,v}&\to I_{\eta,s,v}^{\RB,(k)},\\
			\varphi_{s,v}&\mapsto \left(g\mapsto \int_{\RN_n(\rk_v)\cap w_k\overline{\RN}_n(\rk_v)w_k^{-1}}\varphi_{s,v}(w_k^{-1}ug)\mathrm{d}u \right).
		\end{aligned}
	\end{equation}
  Recall that the above integrals absolutely converge when $\Re(s)$ is sufficiently large and admit  meromorphic continuations to the whole complex plane. The maps \eqref{gi} and \eqref{gi2}
	 agree with the inclusion maps when $k=n$.

    Define the normalized local intertwining operator
	\begin{equation}\label{normalize}
		\CN_v(s,w_k):= \frac{\RL(ns,\eta_v)}{\RL(ns-n+k,\eta_v)}\CM_v(s,w_k)
	\end{equation}
so that 
    \be\label{facm}
	\mathcal{M}(s,w_k)=\frac{\RL(ns-n+k,\eta)}{\RL(ns,\eta)}\cdot \otimes_v\mathcal{N}_v(s,w_k).
    \ee
    
	\subsection{The constant term}\label{sec:2.2}
	
 In this subsection we calculate the constant term of $\RE(g;\varphi_s,\eta)$ along the Borel subgroup (Proposition \ref{prp-constant}):
	\[
	\RE_{\mathrm{B}}(g;\varphi_s,\eta):=\int_{\mathrm{N}_n(\mathrm{k})\backslash \mathrm{N}_n(\mathbb{A})}\RE(ug;\varphi_s,\eta)\mathrm{d}u.
	\]
    This is well known to experts (see \cite[II.1.7]{MW} and \cite{Sh10}), but we will repeat the computation following \cite{HM15} to make our presentation self-contained.
    
	We introduce some notation that will be used in the computation. Let $e_i$ ($1\leq i\leq n$) be the algebraic characters of $\RT_n$ defined over $\rk$ given by
	\[
e_i\left(\mathrm{diag}\left(t_1,\dots,t_n\right)\right)=t_i.
	\]
	They form a basis of the abelian group of algebraic characters of $\RT_n$ defined over $\rk$. The sets of positive roots and simple roots for $\GL_n$ are respectively  given by
	\[
	\begin{aligned}
		\Sigma_n^+&:=\{e_i-e_j:i, j =1,2,\dots, n, \, i<j\},\ \textrm{and}\\
		\Delta_n&:=\{\alpha_1:=e_1-e_2,\alpha_2:=e_2-e_3,\dots,\alpha_{n-1}:=e_{n-1}-e_n\}.
	\end{aligned}
	\]

	\begin{lemp}\label{lemct}
	For all $\varphi \in I_{\eta}$ and $g\in \GL_n(\A)$, the following equality (of meromorphic functions in the complex variable $s$) holds: 
        \begin{equation}
		\label{constant2}
		\RE_{\mathrm{B}}(g;\varphi_s,\eta)=\sum_{k=1}^n	(\mathcal{M}(s,w_k)\varphi_s)(g).
	\end{equation} 
    	\end{lemp}
	
	\begin{proof}
		        By the Bruhat decomposition, we have
		\[
		\mathrm{GL}_n=\bigsqcup_{\substack{w\in \mathrm{W}_n\\w(\Delta_n\backslash\{\alpha_{n-1}\})\subset\Sigma_n^+}}\mathrm{P}_nw^{-1}\RB_n=\bigsqcup_{\substack{w\in \RW_n\\w(\Delta_n\backslash\{\alpha_{n-1}\})\subset\Sigma_n^+}}\RP_nw^{-1}(\RN_n\cap w\overline{\RN}_n w^{-1}).
		\]
		Unfolding the Eisenstein series, we find that
		\[
		\begin{aligned}
			\RE_{\RB}(g;\varphi_s,\eta)&=\int_{\RN_n(\mathrm{k})\backslash \RN_n(\mathbb{A})}\sum_{\gamma\in \RP_n(\mathrm{k})\backslash\mathrm{GL}_n(\mathrm{k})}\varphi_s(\gamma ug)\mathrm{d}u\\
			&=\int_{\RN_n(\mathrm{k})\backslash \RN_n(\mathbb{A})}\sum_{\substack{w\in \RW_n\\w(\Delta_n\backslash\{\alpha_{n-1}\})\subset\Sigma_n^+}}\sum_{\gamma\in (\RN_n\cap w\overline{\RN}_nw^{-1})(\mathrm{k})}\varphi_s(w^{-1}\gamma ug)\mathrm{d}u\\
			&=\sum_{\substack{w\in \RW_n\\w(\Delta_n\backslash\{\alpha_{n-1}\})\subset\Sigma_n^+}}\int_{(\RN_n\cap w\RN_nw^{-1})(\mathrm{k})\backslash \RN_n(\mathbb{A})}\varphi_s(w^{-1}ug)\mathrm{d}u.
		\end{aligned}
		\]
        
		The inner integral equals
		\[
		\begin{aligned}
			&\int_{(\RN_n\cap w\RN_nw^{-1})(\mathrm{k})\backslash \RN_n(\mathbb{A})}\varphi_s(w^{-1}ug)\mathrm{d}u\\
			=&\int_{\RN_n(\mathbb{A})\cap w\overline{\RN}_n(\mathbb{A})w^{-1}}\int_{(\RN_n\cap w\RN_nw^{-1})(\mathrm{k})\backslash(\RN_n\cap w\RN_nw^{-1})(\mathbb{A})}\varphi_s(w^{-1}u_1u_2g)\mathrm{d}u_1\mathrm{d}u_2\\
			=&\int_{\RN_n(\mathbb{A})\cap w\overline{\RN}_n(\mathbb{A})w^{-1}}\varphi_s(w^{-1}ug)\mathrm{d}u.
		\end{aligned}
		\]
Since
    \[
\{w\in\RW_n\,:\, w(\Delta_n\backslash\{\alpha_{n-1}\})\subset\Sigma_n^+\}=\{w_1, w_2, \dots, w_n\},
\]
the lemma follows.
	\end{proof}

   Let $v$ be a finite place such that $\eta_v$ is unramified. Denote by $\varphi_{s,v}^{(k)\circ}$ the unique spherical vector of $I_{\eta,s,v}^{\RB,(k)}$ such that $\varphi_{s,v}^{(k)\circ}(1_n)=1$. Write $\varphi_{s,v}^{\circ}:=\varphi_{s,v}^{(n)\circ}$ for simplicity and note that $\varphi_{s,v}^{\circ}\in I_{\eta,s, v}$. 

 \begin{prp} \label{prp-constant}
Let $S$ be a finite set of places including all archimedean places such that $\eta_v$ is unramified for all $v\notin S$. Assume that $\varphi_{s}=\otimes_v\varphi_{s,v}\in I_{\eta,s}$ with $\varphi_{s,v}=\varphi_{s,v}^{\circ}$ for all $v\notin S$. Then
	\begin{equation} \label{constantterm}
		\begin{aligned}
			\RE_{\RB}(g;\varphi_s,\eta)=\sum_{k=1}^n\frac{\RL(ns-n+k,\eta)}{\RL(ns,\eta)}\cdot\left(\otimes_{v\in S}\CN_v(s,w_k)\varphi_{s,v}\right)\otimes \left(\otimes_{v\notin S}\varphi_{s,v}^{(k)\circ}\right).
		\end{aligned}
	\end{equation}
	\end{prp}

\begin{proof}
    We have Langlands' formula (\cite{L71}; see also \cite{Kim05, Sh10})
	\begin{equation}\label{LF}
	\CM_v(s,w_k)\varphi^{\circ}_{s,v}= 	\prod_{\substack{\alpha\in\Sigma_n^+\\w_k(\alpha)\notin\Sigma_n^+}}\frac{\RL(0,\Lambda_{\eta,s,v}\varrho_v^{-1}\circ\alpha^{\vee})}{\RL(1,\Lambda_{\eta,s,v}\varrho_v^{-1}\circ\alpha^{\vee})}\cdot \varphi^{(k)\circ}_{s,v},
	\end{equation}
	where  
    $\alpha^{\vee}$ denotes the coroot of $\alpha$ and $\varrho_v$ is the character
    \[
    \varrho_v:\RT_n(\rk_v)\to\C,\qquad\mathrm{diag}\left(t_1,\dots,t_n\right)\mapsto\prod_{i=1}^n|t_i|_v^{\frac{n+1}{2}-i}.
    \]

    Note that
    \[
    \{\alpha\in\Sigma_n^+\,:\, w_k(\alpha)\notin\Sigma_n^+\}=\{e_k-e_n,\,e_{k+1}-e_n,\,\dots,\,e_{n-1}-e_n\}.
    \]
 	For $\alpha=e_i-e_n$ with $ k\leq i<n$, we have that 
	\[
	\Lambda_{\eta,s,v}\varrho_v^{-1}\circ\alpha^{\vee}=\eta_v\abs{\cdot}_v^{ns-n+i}.
	\]
	Hence
	\[
		\begin{aligned}
			\CM_v(s,w_k)\varphi^{\circ}_{s,v}&= 
			\prod_{i=k}^{n-1}\frac{\RL(ns-n+i,\eta_v)}{\RL(ns-n+i+1,\eta_v)}\cdot\varphi^{(k)\circ}_{s,v}=
			\frac{\RL(ns-n+k,\eta_v)}{\RL(ns,\eta_v)}\cdot\varphi^{(k)\circ}_{s,v},
		\end{aligned}
	\]
	and the global intertwining operator can be written as 
	\[	\begin{aligned}
			\CM(s,w_k)&\varphi_s=\frac{\RL(ns-n+k,\eta)}{\RL(ns,\eta)}\cdot\left(\otimes_{v\in S}\CN_v(s,w_k)\varphi_{s,v}\right)\otimes \left(\otimes_{v\notin S}\varphi_{s,v}^{(k)\circ}\right).
		\end{aligned}
	\]
    This completes the proof of the proposition in view of \eqref{constant2}.
 \end{proof}

\subsection{The Eisenstein map}
    
	Denote by $\mathcal{A}(\GL_n(\rk)\backslash\GL_n(\A))$ the space of smooth automorphic forms (\cite{Gro23}). Both $I_{\eta}$ and $\mathcal{A}(\GL_n(\rk)\backslash\GL_n(\A))$ are equipped with
	the inductive topology in the category of locally convex complex topological vector  spaces and are quasi-Casselman-Wallach representations of $\GL_n(\A)$ (see \cite[Section 3.2]{LS19}). In view of the following proposition, we define an Eisenstein map
		\begin{equation}
		\label{Eisenstein}
		\mathrm{Eis}_{\eta}:I_{\eta}\to\mathcal{A}(\GL_n(\rk)\backslash\GL_n(\A)),\qquad
		\varphi\mapsto\RE(g;\varphi_s,\eta)|_{s=0}.
	\end{equation}

	\begin{prp}\label{prp-Eisenstein}
		The Eisenstein series $\RE(g;\varphi_s,\eta)$ is holomorphic at $s=0$ for all $\varphi\in I_{\eta}$ and $g\in \GL_n(\A)$, and the Eisenstein map \eqref{Eisenstein} defines a $\GL_n(\A)$-equivariant continuous linear embedding from  $I_{\eta}$ into the space $\mathcal{A}(\GL_n(\rk)\backslash\GL_n(\A))$.
	\end{prp}
	
	\begin{proof}
		It is well known that the poles of the Eisenstein series $\RE(g;\varphi_s,\eta)$ coincide with those of the constant term $\RE_{\RB}(g;\varphi_s,\eta)$ (see \cite[Lemma 6.2]{La2} and also \cite[Lemma 2.3]{M07}). 
		The same argument as in the proof of \cite[Lemmas 6.13 and 8.4]{HM15} shows that
        \[
        \CN_v(s,w_k)\varphi_{v,s} \qquad(k=1,2,\dots,n,\ \varphi_v\in I_{\eta,v})
        \]
       is holomorphic at $s=0$, where $\varphi_{v,s}\in I_{\eta,s,v}$ is defined in a similar way to \eqref{varphis}.
        
        Note that the assumption \eqref{etaassumption} implies that $\RL(s,\eta)$ has no pole and $\RL(0,\eta)\neq 0$ (see \cite[Lemma VII. 13.3]{Neu99}), and consequently   the quotient 
		\[
		\frac{\RL(ns-n+k,\eta)}{\RL(ns,\eta)}\qquad (k=1,2,\dots,n)
		\]
		is holomorphic at $s=0$. Therefore by \eqref{facm} and Lemma  \ref{lemct}, the Eisenstein series $\RE(g;\varphi_s,\eta)$ is holomorphic at $s=0$ for all  $\varphi\in I_{\eta}$ and $g\in \GL_n(\A)$. Then it is clear that   \eqref{Eisenstein} is a well-defined $\GL_n(\A)$-equivariant linear map,  which is continuous by \cite[Theorem 2.2]{Lap08}. 
        
     Note that the condition   \eqref{CM} implies that         
 \[
        \Lambda_{\eta,s}^{(i)}\neq \Lambda_{\eta,s}^{(j)} 
        \qquad \textrm{ for all $s\in \C$ and $1\leq i< j\leq n$},
        \]
        and hence 
\[
\bigoplus_{k=1}^nI_{\eta}^{\RB,(k)}\subset\CA(\RB_n(\rk)\RN_n(\A)\backslash\GL_n(\A))\quad(\textrm{the space of smooth automorphic forms}).
\]
In view of Lemma \ref{lemct}, this 
   further implies that the map  \eqref{Eisenstein} is injective. 
	\end{proof}

	From now on, we omit $s$ from the notation when we specialize to the case $s=0$.

	\section{Cohomology for induced representations}
	\label{sec:lie}

	As in the Introduction, let $F_{\eta}$ be the irreducible holomorphic finite-dimensional representation of $\GL_n(\rk\otimes_{\BQ}\C)$ whose infinitesimal character equals that of $I_{\eta,\infty}$, and denote by $F_{\eta}^{\vee}$ its contragredient.  In this section, we study the relative Lie algebra cohomology spaces
	\[
	\RH^q(\mathfrak{g}_{n,\infty},\widetilde{K}_{n,\infty};I_{\eta,\infty}^{\RB,(k)}\otimes F^\vee_{\eta}),\qquad k=1,2,\dots, n, \ q\in \Z.
	\]
	 
	\subsection{Finite-dimensional representations}\label{secfeta}
	
	Recall that for any dominant weight $\mu\in(\Z^n)^{\CE_{\rk}}$, we denote by $F_{\mu}$ the (unique up to isomorphism) irreducible holomorphic finite-dimensional representation of $\GL_n(\rk\otimes_{\BQ}\C)$ of highest weight $\mu$. We realize $F_{\mu}^{\vee}$ as the algebraic induction
	\begin{equation}
		\label{algebraicinduction}
		 	F_{\mu}^{\vee}={^{\mathrm{alg}}\mathrm{Ind}^{\GL_n(\rk\otimes_{\BQ}\C)}_{\RB_n(\rk\otimes_{\BQ}\C)}}\chi_{-\mu}\subset {^{\mathrm{alg}}\mathrm{Ind}^{\GL_n(\rk\otimes_{\BQ}\C)}_{\RN_n(\rk\otimes_{\BQ}\C)}}\C,
	\end{equation}
	where  $\chi_{-\mu}$ is the algebraic character of $\RT_n(\rk\otimes_{\BQ}\C)$ corresponding to the weight $-\mu$. 
    
Define an action
\[
\mathrm{Aut}(\C)\curvearrowright {^{\mathrm{alg}}\mathrm{Ind}^{\GL_n(\rk\otimes_{\BQ}\C)}_{\RN_n(\rk\otimes_{\BQ}\C)}}\C,\quad (\sigma, f)\mapsto {^\sigma f}:=\left(g\mapsto \sigma(f(\sigma^{-1}(g)))\right),
\]
where $\mathrm{Aut}(\C)$ acts on $\GL_n(\rk\otimes_{\BQ}\C)$ through its action on the second factor of $\rk\otimes_{\BQ}\C$. 
This induces a  $\sigma$-linear isomorphism
	\begin{equation}\label{sigmaF}
\sigma:F^{\vee}_{\mu}\to F^{\vee}_{{^{\sigma}\mu}},
	\end{equation}
	where  $\sigma\in\mathrm{Aut}(\C)$, and 
	\[
	{^{\sigma}\mu}:=\{\mu^{\sigma^{-1}\circ\iota}\}_{\iota\in\CE_{\rk}}.
	\]

	Fix an identification 
$F^{\vee}_{^{\sigma}\eta}=F^{\vee}_{^{\sigma}\mu}$, where $\mu=\{\mu^{\iota}\}_{\iota\in\CE_{\rk}}$ is given by
	\[
	\mu^{\iota}=\begin{cases}
		(-\eta_{\iota},0,\dots,0),&\eta_{\iota}\leq 0;\\
		(-1,\dots,-1,n-1-\eta_{\iota}),&\eta_{\iota}\geq n.
	\end{cases}
	\]
	Applying the above discussion to the representation $F_{\eta}^{\vee}$, we obtain a $\sigma$-linear isomorphism
	\begin{equation}\label{sigmaFeta}
		\sigma:F_{\eta}^{\vee}\to F_{{^{\sigma}\eta}}^{\vee}.
	\end{equation}

Fix a highest weight vector 
\begin{equation}\label{vetavee}
v_{\eta}^{\vee}\in (F_{\eta}^{\vee})^{\RN_n(\rk\otimes_\BQ \C)}
\end{equation}
normalized such that $v_{\eta}^{\vee}(w_0)=1$, where $w_0\in\GL_n(\rk)\subset\GL_n(\rk\otimes_{\BQ}\C)$ is the anti-diagonal permutation matrix.

	\subsection{Cohomology for principal series representations}
	
	The cohomology for induced representations can be calculated by Delorme's lemma \cite[Theorem III.3.3]{BW} (see also \cite[Proposition 2.9]{Chen} and \cite[Theorem 9.2.1]{HRbook}). We review the computations in what follows. 
	
	Let $\widetilde{\mathfrak{k}}_{n,\infty}$ be the complexified Lie algebra of $\widetilde{K}_{n,\infty}$ and set $\mathfrak{p}_{n,\infty}:=\mathfrak{g}_{n,\infty}/\widetilde{\mathfrak{k}}_{n,\infty}$. By definition, the relative Lie algebra cohomology 
	\[
	\RH^q(\mathfrak{g}_{n,\infty},\widetilde{K}_{n,\infty};I_{\eta,\infty}^{\RB,(k)}\otimes F^\vee_{\eta})\qquad (k=1,2,\dots,n,\ q\in \Z)
	\]
	is the degree $q$ cohomology of the complex
	\[
	\begin{aligned}
		\cdots\xrightarrow{d}	\,&\mathrm{Hom}_{\widetilde{\mathfrak{k}}_{n,\infty}}(\wedge^{q-1}\mathfrak{p}_{n,\infty}, I_{\eta,\infty}^{\RB,(k)}\otimes F^\vee_{\eta})\\
		\xrightarrow{d}\,&\mathrm{Hom}_{\widetilde{\mathfrak{k}}_{n,\infty}}(\wedge^{q}\mathfrak{p}_{n,\infty}, I_{\eta,\infty}^{\RB,(k)}\otimes F^\vee_{\eta})\\
		\xrightarrow{d}\,&\mathrm{Hom}_{\widetilde{\mathfrak{k}}_{n,\infty}}(\wedge^{q+1}\mathfrak{p}_{n,\infty}, I_{\eta,\infty}^{\RB,(k)}\otimes F^\vee_{\eta})\to\cdots.
	\end{aligned}
	\]
	See \cite[Section I.1]{BW} for the explicit formula of the coboundary map $d$. 
	
	Denote by $\mathfrak{b}_{n,\infty}$, $\mathfrak{n}_{n,\infty}$, $\mathfrak{t}_{n,\infty}$ and $\mathfrak{t}_{n,\infty}^{K}$  the complexified Lie algebras of $\RB_n(\rk_{\infty})$, $\RN_n(\rk_{\infty})$, $\RT_n(\rk_{\infty})$ and $\RT_n(\rk_{\infty})\cap\widetilde{K}_{n,\infty}$ respectively. 
     The obvious isomorphisms 
	\[
	\mathfrak{p}_{n,\infty}\cong\mathfrak{b}_{n,\infty}/\mathfrak{t}_{n,\infty}^{K}\cong\mathfrak{t}_{n,\infty}/\mathfrak{t}_{n,\infty}^{K}\oplus\mathfrak{n}_{n,\infty}
	\]
    yield isomorphisms
    \begin{eqnarray*}
  &&\mathrm{Hom}_{\widetilde{\mathfrak{k}}_{n,\infty}}\left(\wedge^q\mathfrak{p}_{n,\infty}, I_{\eta,\infty}^{\RB,(k)}\otimes F^\vee_{\eta}\right)\\
  &\xrightarrow{\sim}&\mathrm{Hom}_{\mathfrak{t}_{n,\infty}^{K}}\left(\wedge^q(\mathfrak{b}_{n,\infty}/\mathfrak{t}_{n,\infty}^{K}),\Lambda_{\eta,\infty}^{(k)}\otimes F^\vee_{\eta}\right)\qquad (\textrm{by  Frobenius reciprocity})\\
  &\xrightarrow{\sim}&\bigoplus_{0\leq p\leq q}\mathrm{Hom}_{\mathfrak{t}_{n,\infty}^{K}}\left(\wedge^{q-p}(\mathfrak{t}_{n,\infty}/\mathfrak{t}_{n,\infty}^{K}),\mathrm{Hom}(\wedge^{p}\mathfrak{n}_{n,\infty},F^\vee_{\eta})\otimes\Lambda_{\eta,\infty}^{(k)}\right).
    \end{eqnarray*}

 By   Kostant's theorem \cite{Ko61}, $\RH^{p}(\mathfrak{n}_{n,\infty},F^\vee_{\eta})$ is naturally identified as a direct summand  of $\mathrm{Hom}(\wedge^{p}\mathfrak{n}_{n,\infty},F^\vee_{\eta})$. Then by the same proof as that of \cite[Proposition 2.9]{Chen}, the projection map 
    \[
    \mathrm{Hom}(\wedge^{p}\mathfrak{n}_{n,\infty},F^\vee_{\eta})\twoheadrightarrow \RH^{p}(\mathfrak{n}_{n,\infty},F^\vee_{\eta})
    \]
    and the inclusion map 
    \[
   \RH^{p}(\mathfrak{n}_{n,\infty},F^\vee_{\eta})\hookrightarrow  \mathrm{Hom}(\wedge^{p}\mathfrak{n}_{n,\infty},F^\vee_{\eta})
    \]
    respectively induce an isomorphism 
  \begin{equation}\label{generaldelorme}
	\begin{aligned}
		&\RH^{q}(\mathfrak{g}_{n,\infty},\widetilde{K}_{n,\infty};I_{\eta,\infty}^{\RB,(k)}\otimes F^\vee_{\eta})\\
		\xrightarrow{\sim}\bigoplus_{0\leq p\leq q}&\RH^{q-p}(\mathfrak{t}_{n,\infty},\RT_{n}(\rk_{\infty})\cap\widetilde{K}_{n,\infty};\RH^{p}(\mathfrak{n}_{n,\infty},F^\vee_{\eta})\otimes\Lambda_{\eta,\infty}^{(k)})
	\end{aligned}
	\end{equation}
	and its inverse.  In particular, for the bottom degree $q=c_n$, we have an isomorphism 
	\begin{equation}\label{delorme'}
		\RH^{c_n}(\mathfrak{g}_{n,\infty},\widetilde{K}_{n,\infty};I_{\eta,\infty}^{\RB,(k)}\otimes F^\vee_{\eta})\xrightarrow{\sim}\RH^0(\mathfrak{t}_{n,\infty};\RH^{c_n}(\mathfrak{n}_{n,\infty},F^\vee_{\eta})\otimes\Lambda_{\eta,\infty}^{(k)}).
	\end{equation}
	
	As mentioned above, the nilpotent cohomology $\RH^p(\mathfrak{n}_{n,\infty},F^\vee_{\eta})$ is calculated by Kostant's theorem \cite{Ko61} (see also \cite[Section 2.7]{R-imaginary}). We need to make this explicit. 

    For each $\iota\in\CE_{\rk}$, write $\C_{\iota}:=\C$, viewed as a $\rk$-algebra via $\iota$. Denote by $\mathfrak{n}_n^{\iota}$  the Lie algebra of the complex Lie group $\RN_n(\C_{\iota})$. 
    We fix a basis 
	\begin{equation}\label{basis}
		\{e_{i,j}^{\iota}\}_{1\leq i<j\leq n,\,\iota\in\CE_{\rk}}\quad \textrm{ of }\quad \mathfrak{n}_{n,\infty}=\oplus_{\iota\in\CE_{\rk}}\mathfrak{n}_n^{\iota},
	\end{equation}
	where  $e_{i,j}^{\iota}$ is the $n\times n$ elementary matrix with $1$ at the $(i,j)$-th entry and $0$ elsewhere,  viewed as an element of $\mathfrak{n}_{n}^{\iota}$.
	
	Denote by \[
    \RW_{n,\infty}=\prod_{\iota\in\mathcal{E}_{\rk}}\RW_n^{\iota}
    \]
    the (absolute) Weyl group of $\GL_n(\rk_{\infty}\otimes_{\R}\C)=\prod_{\iota\in\CE_{\rk}}\GL_n(\C_{\iota})$ with respect to $\RT_n(\rk_{\infty}\otimes_{\R}\C)=\prod_{\iota\in\CE_{\rk}}\RT_n(\C_{\iota})$, where $\RW_n^{\iota}$ is the Weyl group of $\GL_n(\C_{\iota})$ with respect to $\RT_n(\C_{\iota})$ and is identified with the group of permutation matrices in $\GL_n(\C_{\iota})$. Moreover, we identify the group of permutation matrices in $\GL_n(\C_{\iota})$ as the permutation group on $n$ letters $\{1,\dots,n\}$ as follows. A permutation matrix $w^{\iota}=[w^{\iota}_{i,j}]_{1\leq i,j\leq n}\in\GL_n(\C_{\iota})$ is identified with a permutation $w^{\iota}$ on $n$ letters such that $w^{\iota}(j)=i$ whenever $w^{\iota}_{i,j}=1$. For $w=\{w^{\iota}\}_{\iota\in\mathcal{E}_{\rk}}\in\RW_{n,\infty}$, we set
	\[
	\frak n_w : = \bigoplus_{\iota\in\CE_{\rk}} \bigoplus_{\substack{1\leq i<j\leq n, \\ 
			w^{\iota}(i)>w^{\iota}(j)}} \BC e_{i, j}^{\iota},
	\]
	which is of dimension $l(w)$ (the length of $w$).
		Then Kostant's theorem reads
	\begin{equation}\label{kostant}
		\RH^p(\mathfrak{n}_{n,\infty},F^\vee_{\eta})=\bigoplus_{\substack{w\in\RW_{n,\infty}\\l(w)=p}} \wedge^p \frak{n}_w^* \otimes \BC w.v_{\eta}^{\vee}, 
	\end{equation}
	where the superscript `$\ast$' indicates the dual space, 
		and $v_{\eta}^{\vee}\in F_{\eta}^{\vee}$ is the fixed highest weight vector in \eqref{vetavee}. Here $\n_w$ has a unique $\RT_n(\rk_{\infty}\otimes_{\R}\C)$-stable complement  in  $\n_{n,\infty}$ so that the one-dimensional space $\wedge^p \frak{n}_w^*$ is obviously viewed as a subspace of $\wedge^p \frak{n}_{n,\infty}^*$.
	
	Let $w^{(k)}:=\{w^{(k)\iota}\}_{\iota\in\mathcal{E}_{\rk}}\in\RW_{n,\infty}$ be the 
	element defined by permutations
	\begin{equation}\label{wk}
	w^{(k)\iota}:=\begin{cases}
		\begin{pmatrix}
			k & k+1 & \cdots & n
		\end{pmatrix}^{-1},&\eta_{\iota}\leq 0;\\
		\begin{pmatrix}
			1 & 2 & \cdots & k
		\end{pmatrix}
		, & \eta_{\iota}\geq n.
	\end{cases}
	\end{equation}
	Then $w^{(k)}$ is the unique  element of $\RW_{n,\infty}$ of length $c_n$ such that $\RT_n(\rk_{\infty})$ acts on the one-dimensional space $\wedge^{c_n}\frak n_{w^{(k)}}^*\otimes \BC w^{(k)}.v_{\eta}^{\vee}$ via $(\Lambda_{\eta,\infty}^{(k)})^{-1}$. Therefore,
	\[
	\RH^{q}(\mathfrak{g}_{n,\infty},\widetilde{K}_{n,\infty};I_{\eta,\infty}^{\RB,(k)}\otimes F^\vee_{\eta})=\{0\}\qquad\text{ if }q<l(w^{(k)})=c_n,
	\]
	and  \eqref{delorme'} reads
	\begin{equation}\label{delorme}
		\begin{aligned}
			\RH^{c_n}(\mathfrak{g}_{n,\infty},\widetilde{K}_{n,\infty};I_{\eta,\infty}^{\RB,(k)}\otimes F^\vee_{\eta})\xrightarrow{\sim} \wedge^{c_n}\frak n_{w^{(k)}}^*\otimes \BC w^{(k)}.v_{\eta}^{\vee}.
					\end{aligned}
	\end{equation}

	\subsection{Rational structures}\label{sec:rs}
	
	We are going to impose a rational structure on the one-dimensional space
	\[
	\RH^{c_n}(\mathfrak{g}_{n,\infty},\widetilde{K}_{n,\infty};I_{\eta,\infty}^{\RB,(k)}\otimes F^\vee_{\eta})\qquad(k=1,2,\dots,n).
	\]
	
	We have an identification $\mathfrak{n}_{n,\infty}=\n_n(\rk)\otimes_\BQ \C$, where $\n_n(\rk)$ denotes the space of $n\times n$ upper-triangular nilpotent matrices with coefficients in $\rk$. The action of  $\mathrm{Aut}(\C)$  on the second factor $\C$  induces an action 
    \be\label{actn}
    \mathrm{Aut}(\C)\curvearrowright \mathfrak{n}_{n,\infty}, \quad (\sigma, x)\mapsto \sigma(x). 
    \ee
     With respect to the fixed basis \eqref{basis} of $\mathfrak{n}_{n,\infty}$, this is explicitly given by
	\[
	\begin{aligned}
		\sigma\left(	\sum_{\substack{1\leq i<j\leq n\\\iota\in\CE_{\rk}}}C_{i,j}^{\iota}e_{i,j}^{\iota}\right)=\sum_{\substack{1\leq i<j\leq n\\\iota\in\CE_{\rk}}}\sigma(C_{i,j}^{\iota})e_{i,j}^{\sigma\circ\iota},\qquad C_{i,j}^{\iota}\in\C.
	\end{aligned}
	\]
	Together with \eqref{sigmaFeta}, we obtain a $\sigma$-linear isomorphism
	\begin{equation}\label{sigmaN}
		\sigma:\RH^q(\mathfrak{n}_{n,\infty},F_{\eta}^{\vee})\to\RH^q(\mathfrak{n}_{n,\infty},F_{{^{\sigma}\eta}}^{\vee})\qquad (q\in\Z)
	\end{equation}
    which sends the cohomology class of a cocycle $\phi\in \Hom_\C(\wedge^q \n_{n,\infty}, F_\eta^\vee)$ to the cohomology class of the composition of 
    \[
      \wedge^q \n_{n,\infty}\xrightarrow{\sigma^{-1}} \wedge^q \n_{n,\infty}\xrightarrow{\phi}F_\eta^\vee \xrightarrow{\sigma} F_{{^{\sigma}\eta}}^{\vee}.
    \]
    	
        We also  have a $\sigma$-linear isomorphism
	\begin{equation}\label{sigmaLambda}
		\sigma:\Lambda^{(k)}_{\eta,\infty}\to{^{\sigma}\Lambda^{(k)}_{\eta,\infty}}
	\end{equation}
	that sends $1$ to $1$. Both \eqref{sigmaN} and \eqref{sigmaLambda} are equivariant with respect to the isomorphism
	\[
	\sigma:\RT_n(\rk\otimes_{\BQ}\C)\to\RT_n(\rk\otimes_{\BQ}\C).
	\]
	Therefore, in view of \eqref{delorme'}, they induce a $\sigma$-linear isomorphism
	\begin{equation}\label{sigmacohomologyinfty}
		\sigma:	\RH^{c_n}(\mathfrak{g}_{n,\infty},\widetilde{K}_{n,\infty};I_{\eta,\infty}^{\RB,(k)}\otimes F^\vee_{\eta})\to	\RH^{c_n}(\mathfrak{g}_{n,\infty},\widetilde{K}_{n,\infty};I_{{^{\sigma}\eta},\infty}^{\RB,(k)}\otimes F^\vee_{{^{\sigma}\eta}}).
	\end{equation}
	
	It is clear  that the diagram
	\begin{equation}\label{delormesigma}
	\begin{CD}
		\RH^{c_n}(\mathfrak{g}_{n,\infty},\widetilde{K}_{n,\infty};I_{\eta,\infty}^{\RB,(k)}\otimes F^\vee_{\eta}) @>\eqref{delorme}>>
		\wedge^{c_n}\frak n_{w^{(k)}}^* \otimes \BC w^{(k)}.v_{\eta}^{\vee} 
		\\
		@V\sigma VV @VV\sigma V\\
		\RH^{c_n}(\mathfrak{g}_{n,\infty},\widetilde{K}_{n,\infty};I_{{^{\sigma}\eta},\infty}^{\RB,(k)}\otimes F^\vee_{{^{\sigma}\eta}}) @>\eqref{delorme}>> \wedge^{c_n}\frak n_{{}^\sigma w^{(k)}}^* \otimes \BC \,{}^\sigma w^{(k)}.v_{{}^\sigma\eta}^{\vee}
	\end{CD}
	\end{equation}
	commutes for all $\sigma\in\mathrm{Aut}(\C)$. Here 
    \begin{equation}\label{w317}
{^{\sigma}w^{(k)}}:=\left\{w^{(k)\sigma^{-1}\circ\iota}\right\}_{\iota\in\CE_{\rk}}\in\RW_{n,\infty}
    \end{equation}
    and the right vertical arrow is induced by \eqref{actn}  and \eqref{sigmaFeta}. 
	
 In view of the canonical isomorphism
	\[
	\imath_{\mathrm{can}}:I_{\eta,\mathrm{f}}^{\RB,(k)}\otimes	\RH^{c_n}(\mathfrak{g}_{n,\infty},\widetilde{K}_{n,\infty};I_{\eta,\infty}^{\RB,(k)}\otimes F^\vee_{\eta})\xrightarrow{\sim}	\RH^{c_n}(\mathfrak{g}_{n,\infty},\widetilde{K}_{n,\infty};I_{\eta}^{\RB,(k)}\otimes F^\vee_{\eta}),
	\]	
	 for every $\sigma\in\mathrm{Aut}(\C)$ we define a $\sigma$-linear isomorphism
	\begin{equation}\label{sigmacohomology}
		\sigma:	\RH^{c_n}(\mathfrak{g}_{n,\infty},\widetilde{K}_{n,\infty};I_{\eta}^{\RB,(k)}\otimes F^\vee_{\eta})\to	\RH^{c_n}(\mathfrak{g}_{n,\infty},\widetilde{K}_{n,\infty};I_{{^{\sigma}\eta}}^{\RB,(k)}\otimes F^\vee_{{^{\sigma}\eta}})
	\end{equation}
	such that the diagram
	\[
	\begin{CD}
		I_{\eta,\mathrm{f}}^{\RB,(k)}\otimes	\RH^{c_n}(\mathfrak{g}_{n,\infty},\widetilde{K}_{n,\infty};I_{\eta,\infty}^{\RB,(k)}\otimes F^\vee_{\eta}) @>\imath_{\mathrm{can}}>> \RH^{c_n}(\mathfrak{g}_{n,\infty},\widetilde{K}_{n,\infty};I_{\eta}^{\RB,(k)}\otimes F^\vee_{\eta})\\
		@V\sigma\otimes \sigma VV @VV\sigma V\\
		I_{{^{\sigma}\eta},\mathrm{f}}^{\RB,(k)}\otimes \RH^{c_n}(\mathfrak{g}_{n,\infty},\widetilde{K}_{n,\infty};I_{{^{\sigma}\eta},\infty}^{\RB,(k)}\otimes F^\vee_{{^{\sigma}\eta}}) @>\imath_{\mathrm{can}}>> \RH^{c_n}(\mathfrak{g}_{n,\infty},\widetilde{K}_{n,\infty};I_{{^{\sigma}\eta}}^{\RB,(k)}\otimes F^\vee_{{^{\sigma}\eta}})
	\end{CD}
	\]
	commutes, where the first map $\sigma$ in the left vertical arrow is given by 
	\begin{equation}\label{sigmaIf}
	\sigma:I_{\eta,\mathrm{f}}^{\RB,(k)}\to I_{{^{\sigma}\eta},\mathrm{f}}^{\RB,(k)},\qquad \varphi\mapsto\sigma\circ\varphi,
	\end{equation}
	and the second $\sigma$ is given in \eqref{sigmacohomologyinfty}. 
    
	\subsection{Cohomology for degenerate principal series representations}\label{dp}
	
	We end this section by discussing the cohomology space 
	\[
	\RH^{c_n}(\mathfrak{g}_{n,\infty},\widetilde{K}_{n,\infty};I_{\eta,\infty}\otimes F_{\eta}^{\vee}).
	\]

	\begin{lemp}\label{isoiib}
		The natural embedding $I_{\eta,\infty}\hookrightarrow I_{\eta,\infty}^{\RB}$ induces an isomorphism
		\[
		\RH^{c_n}(\mathfrak{g}_{n,\infty},\widetilde{K}_{n,\infty};I_{\eta,\infty}\otimes F_{\eta}^{\vee})\cong\RH^{c_n}(\mathfrak{g}_{n,\infty},\widetilde{K}_{n,\infty};I^{\RB}_{\eta,\infty}\otimes F_{\eta}^{\vee}).
		\]
	\end{lemp}
	
	\begin{proof}
		By induction in stages, the embedding $I_{\eta,\infty}\hookrightarrow I_{\eta,\infty}^{\RB}$ can be understood as
		\[
		\begin{aligned}
			I_{\eta,\infty}&={^{\mathrm{u}}\mathrm{Ind}}^{\GL_n(\rk_{\infty})}_{\RP_n(\rk_{\infty})}(\mathbf{1}\otimes\eta_{\infty}^{-1})\hookrightarrow{^{\mathrm{u}}\mathrm{Ind}}^{\GL_n(\rk_{\infty})}_{\RP_n(\rk_{\infty})}\left(({^{\mathrm{u}}\mathrm{Ind}}^{\GL_{n-1}(\rk_{\infty})}_{\RB_{n-1}(\rk_{\infty})}\mathbf{1})\otimes\eta_{\infty}^{-1}\right)=I_{\eta,\infty}^{\RB}.
		\end{aligned}
		\]
		Here the first `$\mathbf{1}$' is the trivial representation of $\GL_{n-1}(\rk_{\infty})$, while the second `$\mathbf{1}$' is the trivial representation of $\RB_{n-1}(\rk_{\infty})$.
		
		Write $\RP_n=\RM_n\RU_n$ for the Levi decomposition of $\RP_n$, where $\RM_n=\GL_{n-1}\times\GL_1$ is the Levi component and $\RU_n$ is the unipotent radical. Denote by $\mathfrak{m}_{n,\infty}$ and $\mathfrak{u}_{n,\infty}$  the complexified Lie algebras of $\RM_n(\rk_{\infty})$ and $\RU_n(\rk_{\infty})$ respectively. Similar to previous discussions for principal series representations,  Delorme's lemma (\cite[Theorem III.3.3]{BW}) gives the following isomorphisms
		\[
		\RH^{c_n}(\mathfrak{g}_{n,\infty},\widetilde{K}_{n,\infty};I_{\eta,\infty}\otimes F_{\eta}^{\vee})\cong\RH^0(\mathfrak{m}_{n,\infty};\RH^{c_n}(\mathfrak{u}_{n,\infty},F_{\eta}^{\vee})\otimes(\mathbf{1}\otimes\eta^{-1}_\infty))
		\]
		and
		\[
		\begin{aligned}
			&\,\RH^{c_n}(\mathfrak{g}_{n,\infty},\widetilde{K}_{n,\infty};I^{\RB}_{\eta,\infty}\otimes F_{\eta}^{\vee})\\
			=&\,\RH^{c_n}\left(\mathfrak{g}_{n,\infty},\widetilde{K}_{n,\infty};{^{\mathrm{u}}\mathrm{Ind}}^{\GL_n(\rk_{\infty})}_{\RP_n(\rk_{\infty})}\left(({^{\mathrm{u}}\mathrm{Ind}}^{\GL_{n-1}(\rk_{\infty})}_{\RB_{n-1}(\rk_{\infty})}\mathbf{1})\otimes\eta_{\infty}^{-1}\right)\otimes F_{\eta}^{\vee}\right)\\
			\cong&\,\RH^0\left(\mathfrak{m}_{n,\infty};\RH^{c_n}(\mathfrak{u}_{n,\infty},F_{\eta}^{\vee})\otimes\left(({^{\mathrm{u}}\mathrm{Ind}}^{\GL_{n-1}(\rk_{\infty})}_{\RB_{n-1}(\rk_{\infty})}\mathbf{1})\otimes\eta^{-1}_\infty\right)\right).
		\end{aligned}
		\]
		Note that both spaces above have dimension one. The inclusion $\mathbf{1}\hookrightarrow{^{\mathrm{u}}\mathrm{Ind}}^{\GL_{n-1}(\rk_{\infty})}_{\RB_{n-1}(\rk_{\infty})}\mathbf{1}$ induces an isomorphism
		\[
		\begin{aligned}
			&\,\RH^0(\mathfrak{m}_{n,\infty};\RH^{c_n}(\mathfrak{u}_{n,\infty},F_{\eta}^{\vee})\otimes(\mathbf{1}\otimes\eta^{-1}_\infty))\\
			\cong&\,\RH^0\left(\mathfrak{m}_{n,\infty};\RH^{c_n}(\mathfrak{u}_{n,\infty},F_{\eta}^{\vee})\otimes\left(({^{\mathrm{u}}\mathrm{Ind}}^{\GL_{n-1}(\rk_{\infty})}_{\RB_{n-1}(\rk_{\infty})}\mathbf{1})\otimes\eta^{-1}_\infty\right)\right)
		\end{aligned}
		\]
		and thus the isomorphism in the lemma.
	\end{proof}
	
	Let $\sigma\in\mathrm{Aut}(\C)$. In view of the above lemma and Section \ref{sec:rs}, we obtain a $\sigma$-linear isomorphism
	\begin{equation}\label{sigmaI}
		\sigma:\RH^{c_n}(\mathfrak{g}_{n,\infty},\widetilde{K}_{n,\infty};I_{\eta,\infty}\otimes F_{\eta}^{\vee})\to\RH^{c_n}(\mathfrak{g}_{n,\infty},\widetilde{K}_{n,\infty};I_{{^{\sigma}\eta},\infty}\otimes F_{{^{\sigma}\eta}}^{\vee}).
	\end{equation}
Similar to \eqref{sigmaIf}, we have a $\sigma$-linear isomorphism
    \[
    \sigma:I_{\eta,\mathrm{f}}\to I_{{^{\sigma}\eta},\mathrm{f}},\qquad \varphi\mapsto\sigma\circ\varphi.
    \]
    Finally, we define a $\sigma$-linear isomorphism
	\[
		\sigma:\RH^{c_n}(\mathfrak{g}_{n,\infty},\widetilde{K}_{n,\infty};I_{\eta}\otimes F_{\eta}^{\vee})\to\RH^{c_n}(\mathfrak{g}_{n,\infty},\widetilde{K}_{n,\infty};I_{{^{\sigma}\eta}}\otimes F_{{^{\sigma}\eta}}^{\vee})
	\]
	such that the diagram
	\begin{equation}
		\label{diagrameis}
		\begin{CD}
			I_{\eta_{\mathrm{f}}}\otimes\RH^{c_n}(\mathfrak{g}_{n,\infty},\widetilde{K}_{n,\infty};I_{\eta,\infty}\otimes F_{\eta}^{\vee})@>\imath_{\mathrm{can}}>>\RH^{c_n}(\mathfrak{g}_{n,\infty},\widetilde{K}_{n,\infty};I_{\eta}\otimes F_{\eta}^{\vee})\\
			@V\sigma\otimes \sigma VV @VV\sigma V\\
			I_{^{\sigma}\eta_{\mathrm{f}}}\otimes\RH^{c_n}(\mathfrak{g}_{n,\infty},\widetilde{K}_{n,\infty};I_{{^{\sigma}\eta},\infty}\otimes F_{{^{\sigma}\eta}}^{\vee})@>\imath_{\mathrm{can}}>>\RH^{c_n}(\mathfrak{g}_{n,\infty},\widetilde{K}_{n,\infty};I_{{^{\sigma}\eta}}\otimes F_{{^{\sigma}\eta}}^{\vee})
		\end{CD}
	\end{equation}
	commutes.

	\section{The boundary cohomology}\label{sec4}
	
	In this section, we review the sheaf cohomology for the ad\'elic symmetric space of $\GL_n$, especially the structure of the boundary cohomology following \cite[Theorem I]{Har90}, \cite[Section 4.2]{HRbook} and \cite[Section 2.6]{R-imaginary}.

	
	Recall that $\mathcal{X}_n =\GL_n(\rk)\backslash\GL_n(\A)/\widetilde{K}_{n,\infty}$ and $K_{\mathrm{f}}\subset\GL_n(\A_{\mathrm{f}})$ is an open compact subgroup. Let $\overline{\mathcal{X}}_n(K_{\mathrm{f}})$ be the Borel-Serre compactification of $\mathcal{X}_n/K_{\mathrm{f}}$ with the boundary $\partial(\mathcal{X}_n/K_{\mathrm{f}})$ stratified as $\partial(\mathcal{X}_n/K_{\mathrm{f}})=\bigsqcup_{\RP}\partial_{\RP}(\mathcal{X}_n/K_{\mathrm{f}})$, where $\RP$ runs over the $\GL_n(\rk)$-conjugacy classes of proper parabolic subgroups of $\GL_n$ defined over $\rk$.
	
	For any dominant weight $\mu\in(\Z^n)^{\CE_{\rk}}$, the representation $F_{\mu}^{\vee}$ defines a sheaf on $\mathcal{X}_n/K_{\mathrm{f}}$, which is still denoted by $F^\vee_{\mu}$. For each $q\in\Z$, we consider the sheaf cohomology $\RH^q(\mathcal{X}_n/K_{\mathrm{f}},F^\vee_{\mu})$ and the boundary cohomology $\RH^q(\partial_{\RP}(\mathcal{X}_n/K_{\mathrm{f}}),F^\vee_{\mu})$. Passing to the limit, we define
	\be\label{defbcoh}
	\begin{aligned}
		\RH^{q}(\mathcal{X}_n,F^\vee_{\mu})	&:=\lim_{\substack{\longrightarrow\\K_{\mathrm{f}}}}\RH^{q}(\mathcal{X}_n/K_{\mathrm{f}},F^\vee_{\mu}),\\
         \RH^{q}(\partial_{\RP}\mathcal{X}_n,F^\vee_{\mu})	&:=\lim_{\substack{\longrightarrow\\K_{\mathrm{f}}}}\RH^{q}(\partial_{\RP}(\mathcal{X}_n/K_{\mathrm{f}}),F^\vee_{\mu}).
	\end{aligned}
	\ee
	The sheaf cohomology can be calculated by the Betti (singular) complex, and the $\sigma$-linear isomorphism $\sigma:F^\vee_{\mu}\to F^\vee_{^{\sigma}\mu}$ defines a $\sigma$-linear map of the corresponding sheaves which induces $\sigma$-linear maps
	\begin{equation} \label{sheafsigma}
		\begin{aligned}
			\sigma:\RH^{q}(\mathcal{X}_n,F^\vee_{\mu})&\to\RH^{q}(\mathcal{X}_n,F^\vee_{^{\sigma}\mu}),\\
              \sigma:\RH^{q}(\partial_\RP\mathcal{X}_n,F^\vee_{\mu})&\to\RH^{q}(\partial_\RP\mathcal{X}_n,F^\vee_{^{\sigma}\mu}).
		\end{aligned}
	\end{equation}
	Both the domain and the codomain of the maps in \eqref{sheafsigma} are naturally smooth representations of $\GL_n(\A_{\mathrm{f}})$, and the maps in \eqref{sheafsigma} are $\GL_n(\A_{\mathrm{f}})$-equivariant.

	Applying the preceding discussion to the representation $F_{\eta}^{\vee}$, we now consider the boundary cohomology $\RH^q(\partial_{\RB_n}\mathcal{X}_n,F^\vee_{\eta})$ along the Borel stratum. Note that the homotopy equivalence between the underlying spaces induces an identification (see \cite[Section 1.1.1]{Har90} and \cite[Proposition 3.1]{Har82})
	\[
\RH^q(\partial_{\RB_n}\mathcal{X}_n,F^\vee_{\eta})=\lim_{\substack{\longrightarrow\\K_{\mathrm{f}}}}\RH^q(\RB_n(\rk)\backslash\GL_n(\A)/\widetilde{K}_{n,\infty}K_{\mathrm{f}},F^\vee_{\eta}).
	\]
    Calculating $\RH^q(\partial_{\RB_n}\mathcal{X}_n,F^\vee_{\eta})$ via the de Rham resolution yields a canonical isomorphism
\begin{equation}\label{boundaryZ}
\RH^q(\partial_{\RB_n}\mathcal{X}_n,F^\vee_{\eta})\cong\RH^q(\mathfrak{g}_{n,\infty},\widetilde{K}_{n,\infty};\CC^{\infty}(\RB_n(\rk)\backslash\GL_n(\A))\otimes F_{\eta}^{\vee}).
\end{equation}
Here
\[
\CC^{\infty}(\RB_n(\rk)\backslash\GL_n(\A)):=\bigcup_{K_{\mathrm{f}}}\CC^{\infty}(\RB_n(\rk)\backslash\GL_n(\A)/K_{\mathrm{f}})
\]
denotes the space of smooth functions on $\RB_n(\rk)\backslash\GL_n(\A)$, where $K_{\mathrm{f}}$ runs over all open compact subgroups of $\GL_n(\A_{\mathrm{f}})$. We have the following proposition, which is essentially due to \cite[Theorem I]{Har90}. The proof will be included for the convenience of the reader.

\begin{prp}
For every $1\leq k\leq n$, the inclusion $I_{\eta}^{\RB,(k)}\hookrightarrow\CC^{\infty}(\RB_n(\rk)\backslash\GL_n(\A))$ induces an embedding
    \begin{equation}\label{prp4.1}
	\RH^{c_n}(\mathfrak{g}_{n,\infty},\widetilde{K}_{n,\infty};I_{\eta}^{\RB,(k)}\otimes F^\vee_{\eta})\hookrightarrow\RH^{c_n}(\partial_{\RB_n}\CX_n,F^\vee_{\eta}),
	\end{equation}
	which is $\mathrm{Aut}(\C)$-equivariant in the sense that the diagram
	\begin{equation}\label{boundaryautC}
		\begin{CD}
			\RH^{c_n}(\mathfrak{g}_{n,\infty},\widetilde{K}_{n,\infty};I_{\eta}^{\RB,(k)}\otimes F^\vee_{\eta}) @>>>\RH^{c_n}(\partial_{\RB_n}\CX_n,F^\vee_{\eta})\\
			@V\sigma VV @VV\sigma V\\
			\RH^{c_n}(\mathfrak{g}_{n,\infty},\widetilde{K}_{n,\infty};I_{{^{\sigma}\eta}}^{\RB,(k)}\otimes F^\vee_{{^{\sigma}\eta}}) @>>>\RH^{c_n}(\partial_{\RB_n}\CX_n, F^\vee_{{^{\sigma}\eta}})
		\end{CD}
	\end{equation}
	commutes for every $\sigma\in\mathrm{Aut}(\C)$.
\end{prp}

\begin{proof}
By \cite[Theorem I]{Har90} (see also \cite[Proposition 4.2]{HRbook} and \cite[Proposition 2.23]{R-imaginary}), for every $q\in\Z$, we have an identification
	\begin{equation}
		\label{boundarystructure}
		\RH^q(\partial_{\RB_n}\mathcal{X}_n,F^\vee_{\eta})=\bigoplus_{0\leq p\leq q}{^{\mathrm{u}}\mathrm{Ind}^{\GL_n(\A_{\mathrm{f}})}_{\RB_n(\A_{\mathrm{f}})}}\left(\RH^{q-p}(\mathcal{X}_n^{\RT_n},\RH^p(\mathfrak{n}_{n,\infty};F^\vee_{\eta}))\right),
	\end{equation}
	which is $\mathrm{Aut}(\C)$-equivariant in the sense that the diagram
	\begin{equation}\label{sigmaboundary}
		\begin{CD}
			\RH^q(\partial_{\RB_n}\mathcal{X}_n,F^\vee_{\eta}) @>=>>\bigoplus_{0\leq p\leq q}{^{\mathrm{u}}\mathrm{Ind}^{\GL_n(\A_{\mathrm{f}})}_{\RB_n(\A_{\mathrm{f}})}}\left(\RH^{q-p}(\mathcal{X}_n^{\RT_n},\RH^p(\mathfrak{n}_{n,\infty};F^\vee_{\eta}))\right)\\
			@V\sigma VV @VV\sigma V\\
			\RH^q(\partial_{\RB_n}\mathcal{X}_n,F^\vee_{{^{\sigma}\eta}}) @>=>>\bigoplus_{0\leq p\leq q}{^{\mathrm{u}}\mathrm{Ind}^{\GL_n(\A_{\mathrm{f}})}_{\RB_n(\A_{\mathrm{f}})}}\left(\RH^{q-p}(\mathcal{X}_n^{\RT_n},\RH^p(\mathfrak{n}_{n,\infty};F^\vee_{{^{\sigma}\eta}}))\right)
		\end{CD}
	\end{equation}
	commutes for every $\sigma\in \mathrm{Aut}(\C)$. Here 
	\[
	\mathcal{X}_n^{\RT_n}:=\RT_n(\rk)\backslash\RT_n(\A)/(\RT_n(\rk_{\infty})\cap\widetilde{K}_{n,\infty}),
	\]
$\RH^p(\mathfrak{n}_{n,\infty};F^\vee_{{}^\sigma\eta})$ is naturally a representation of $\RT_n(\rk\otimes_{\BQ}\C)$ which defines the cohomology space $\RH^{\ast}(\mathcal{X}_n^{\RT_n},\RH^p(\mathfrak{n}_{n,\infty};F^\vee_{{^{\sigma}\eta}}))$ as in \eqref{defbcoh}, and the vertical arrows are the obvious $\sigma$-linear maps. 

Applying Kostant's theorem \eqref{kostant}, we rewrite \eqref{boundarystructure} as
	\begin{equation} \label{dechq0}
		\RH^q(\partial_{\RB_n}\mathcal{X}_n,F^\vee_{\eta})=\bigoplus_{w\in\RW_{n,\infty}}{^{\mathrm{u}}\mathrm{Ind}^{\GL_n(\A_{\mathrm{f}})}_{\RB_n(\A_{\mathrm{f}})}}\left(\RH^{q-l(w)}(\mathcal{X}_n^{\RT_n}, \wedge^{l(w)}\frak n_w^*\otimes \BC w. v^{\vee}_\eta)
		\right).
	\end{equation}

Recall the element $w^{(k)}\in\RW_{n,\infty}$ of length $c_n$ in \eqref{wk}. We have  
\begin{eqnarray}
    \label{inch00} &&\RH^{c_n}(\partial_{\RB_n}\mathcal{X}_n,F^\vee_{\eta})\\
    \nonumber &\supset & {^{\mathrm{u}}\mathrm{Ind}^{\GL_n(\A_{\mathrm{f}})}_{\RB_n(\A_{\mathrm{f}})}}\left(\RH^{0}(\mathcal{X}_n^{\RT_n},\wedge^{c_n}\frak n_{w^{(k)}}^*\otimes \BC w^{(k)}. v^{\vee}_\eta)\right) \quad (\textrm{by \eqref{dechq0}})\\
   \nonumber &=&\bigoplus_{\Lambda}{^{\mathrm{u}}\mathrm{Ind}^{\GL_n(\A_{\mathrm{f}})}_{\RB_n(\A_{\mathrm{f}})}}\Lambda_{\mathrm{f}}\otimes(\wedge^{c_n}\frak{n}_{w^{(k)}}^*\otimes \BC w^{(k)}. v^{\vee}_\eta)\quad (\textrm{see \cite[Sections 2.5, 2.6]{Har87}})\\
    \nonumber &\cong&\bigoplus_{\Lambda}{^{\mathrm{u}}\mathrm{Ind}^{\GL_n(\A_{\mathrm{f}})}_{\RB_n(\A_{\mathrm{f}})}}\Lambda_{\mathrm{f}}\otimes\RH^{c_n}(\mathfrak{g}_{n,\infty},\widetilde{K}_{n,\infty};{^{\mathrm{u}}\mathrm{Ind}}^{\GL_n(\rk_{\infty})}_{\RB_n(\rk_{\infty})}\Lambda_{\infty}\otimes F^\vee_{\eta})   \quad (\textrm{by \eqref{delorme}})\\
		\nonumber &\cong&\bigoplus_{\Lambda}\RH^{c_n}(\mathfrak{g}_{n,\infty},\widetilde{K}_{n,\infty};{^{\mathrm{u}}\mathrm{Ind}}^{\GL_n(\A)}_{\RB_n(\A)}\Lambda\otimes F^\vee_{\eta})\\
        \nonumber &\supset &\RH^{c_n}(\mathfrak{g}_{n,\infty},\widetilde{K}_{n,\infty};I_{\eta}^{\RB,(k)}\otimes F^\vee_{\eta}),
\end{eqnarray}
where the direct sum is taken over all  
    characters $\Lambda=\Lambda_{\mathrm{f}}\otimes\Lambda_{\infty}:\RT_n(\rk)\backslash\RT_n(\A)\to\C^{\times}$ such that $\Lambda_{\infty}=\Lambda_{\eta,\infty}^{(k)}$.
   
From \cite[Theorem 2.7]{Sch83}, we see that the map \eqref{prp4.1} agrees with map 
\begin{equation}\label{prp4.1p}
	\RH^{c_n}(\mathfrak{g}_{n,\infty},\widetilde{K}_{n,\infty};I_{\eta}^{\RB,(k)}\otimes F^\vee_{\eta})\hookrightarrow\RH^{c_n}(\partial_{\RB_n}\CX_n,F^\vee_{\eta})
	\end{equation}
obtained from \eqref{inch00}. Thus the map \eqref{prp4.1} is injective, and is $\mathrm{Aut}(\C)$-equivariant in view of \eqref{delormesigma} and \eqref{sigmaboundary}.  
 \end{proof}

	\section{The Eisenstein cohomology}\label{sec:5}

  In view of the identification 
	\begin{equation}\label{sheafLie}
		\RH^{q}(\mathcal{X}_n,F^\vee_{\mu})=\RH^q(\mathfrak{g}_{n,\infty},\widetilde{K}_{n,\infty};\CA(\GL_n(\rk)\backslash\GL_n(\A))\otimes F^\vee_{\mu})\qquad (q\in\Z),
	\end{equation}
	the Eisenstein map \eqref{Eisenstein} induces 
    a linear map 
	\begin{equation}
		\mathrm{Eis}_{\eta}:\RH^{c_n}(\mathfrak{g}_{n,\infty},\widetilde{K}_{n,\infty};I_{\eta}\otimes F^\vee_{\eta})\to\RH^{c_n}(\mathcal{X}_n,F^\vee_{\eta}).
	\end{equation}
    In this section we will prove Theorem \ref{thm:Eis} on rationality of the Eisenstein cohomology.

\subsection{The explicit generator of $\RH^{n-1}(\mathfrak{g}_{n,v},\widetilde{K}_{n,v};I_{\eta,v}\otimes F^\vee_{\eta,v})$} \label{sec:gen}

We write $F_{\eta}^{\vee}=\otimes_{v|\infty}F_{\eta,v}^{\vee}$, where $F_{\eta,v}^{\vee}$ is an irreducible holomorphic representation of $\GL_n(\rk_v\otimes_{\R}\C)$ realized as an algebraic parabolic induction as in \eqref{algebraicinduction}.  Fix an archimedean place $v$ of $\rk$. As in \eqref{dimhcn}, we have that  
  \[ 
	\mathrm{dim}\,\RH^{n-1}(\mathfrak{g}_{n,v},\widetilde{K}_{n,v};I_{\eta,v}\otimes F^\vee_{\eta,v})=1.
	\]
    In what follows we construct an explicit generator 
\[
\kappa_{\eta_v}\in \RH^{n-1}(\mathfrak{g}_{n,v},\widetilde{K}_{n,v};I_{\eta,v}\otimes F^\vee_{\eta,v}).
\]


Let $\iota_v'$ be the unique element in $\{\iota_v, \bar\iota_v\}$ such that $\eta_{\iota_v'}\geq n$. 
 Set 
 \[
\mathfrak{p}_{n,v}:=\mathfrak{g}_{n,v}/\widetilde{\mathfrak{k}}_{n,v},
\]
    where $\widetilde{\mathfrak{k}}_{n,v}$ is the complexified Lie algebra of $\widetilde{K}_{n,v}$. 
Following \cite[Lemmas 4.3, 4.1 and (40)]{DX}, let
    \[
   \tau_{n,v}\subset \wedge^{n-1}\mathfrak{p}_{n,v}, \qquad \tau_{\eta_v}\subset I_{\eta,v},\qquad\textrm{and}\qquad \tau'_{\eta_v}\subset F_{\eta,v}^\vee
    \]
    be the unique irreducible $K_{n,v}$-subrepresentations  of highest weights
    \[
 (\iota_v',\dots,\iota_v', (\iota_v')^{1-n}), \ \,  ((\iota_v')^0,\dots,(\iota_v')^0,(\iota_v')^{\eta_{\overline{\iota_v'}}\,-\eta_{\iota_v'}}), \ \, \textrm{and}\ \, 
 ((\iota_v')^{\eta_{\iota_v'}-\eta_{\overline{\iota_v'}}\,+1-n},\iota_v',\dots,\iota_v'),
    \]
    respectively. 

Write $\C_{\iota_v'}:=\C$ (resp. $\C_{\overline{\iota_v'}}:=\C$), viewed as a $\rk_v$-algebra via $\iota_v'$ (resp. $\overline{\iota_v'}$). Note that $\tau'_{\eta_v}$ is isomorphic to the Cartan product of
	$F_{\eta,\iota_v'}^{\vee}|_{K_{n,v}}$ and $F_{\eta,\overline{\iota_v'}}^{\vee}|_{K_{n,v}}$, where $F_{\eta,\iota_v'}$ and  $F_{\eta,\overline{\iota_v'}}$ are irreducible holomorphic finite-dimensional representations of   $\GL_n(\C_{\iota_v'})$ and  $\GL_n(\C_{\overline{\iota_v'}})$ respectively such that $F_{\eta,v}^{\vee}=F_{\eta,\iota_v'}^{\vee}\otimes F_{\eta,\overline{\iota_v'}}^{\ve}$. 
     Also note that $\tau_{n,v}$ is isomorphic to the PRV-component (see \cite{PRV67}) of $\tau_{\eta_v}\otimes \tau_{\eta_v}'$, and 
     \be\label{dim11}
     \dim \mathrm{Hom}_{K_{n,v}}(\tau_{n,v},\tau_{\eta_v}\otimes \tau_{\eta_v}')=\dim \mathrm{Hom}_{K_{n,v}}(\tau_{n,v},\tau_{\eta_v}\otimes F_{\eta_v}^\vee)=1.
     \ee
     
Define a character  
    \[
	\eta^{\circ}_{v}:=(\iota_v')^n: \rk_v^\times \rightarrow \C^\times,
    \]
    and set 
    \[
    I_{\eta_v^\circ}:={}^{\mathrm{u}}\mathrm{Ind}_{\RP_n(\rk_v)}^{\GL_n(\rk_v)} (\mathbf{1} \otimes (\eta_v^\circ)^{-1}).
   \]
   Denote by $F_{\eta^\circ_v}$ the coefficient system of $I_{\eta_v^\circ}$. Then all the preceding discussions apply to $\eta_v^\circ$.

  We have a Jantzen--Zuckerman translation map (see \cite{ Jan79, Zu77,Vo81,VZ84})
		\[
		\jmath_{\eta,v}:I_{\eta_v^\circ}\otimes F^\vee_{\eta_v^\circ}\to I_{\eta,v}\otimes F^\vee_{\eta,v}
		\]
        explicitly constructed in \cite[Section 2.3]{local}. It is an injective homomorphism of representations of $\GL_n(\rk_v)$ which induces   an isomorphism	\[	\jmath_{\eta,v}:\RH^{n-1}(\mathfrak{g}_{n,v},\widetilde{K}_{n,v};I_{\eta_v^\circ}\otimes F^\vee_{\eta_v^\circ})\xrightarrow{\sim}\RH^{n-1}(\mathfrak{g}_{n,v},\widetilde{K}_{n,v};I_{\eta,v}\otimes F^\vee_{\eta,v}).	\]

\begin{lemp}\label{cohomi}
    There is a well-defined linear isomorphism 
    \[
    \begin{array}{rcl}
    \mathrm{Hom}_{K_{n,v}}(\tau_{n,v},\tau_{\eta_v}\otimes \tau_{\eta_v}')&\rightarrow& \RH^{n-1}(\mathfrak{g}_{n,v},\widetilde{K}_{n,v};I_{\eta,v}\otimes F^\vee_{\eta,v}),\smallskip\\
    \phi&\mapsto & \textrm{the cohomology class of $\tilde \phi$ },
    \end{array}
    \]
    where 
    $\tilde \phi$ is the composition 
    \[
    \tilde \phi: \wedge^{n-1}\mathfrak{p}_{n,v}\xrightarrow{\textrm{the $K_{n,v}$-equivariant projection}}\tau_{n,v}\xrightarrow{\phi} \tau_{\eta_v}\otimes \tau_{\eta_v}'\xrightarrow{\subset} I_{\eta,v}\otimes F^\vee_{\eta,v}.
    \]
	\end{lemp}
	
	\begin{proof}
     First consider the special case when 
    $\eta_v=\eta^{\circ}_{v}$.
   Note that $I_{\eta_v^\circ}$
   becomes unitarizable after twisting by the character $\abs{\det}_v^{\frac{1}{2}}$. Therefore in this special case, the lemma holds by \cite[Proposition 9.4.3]{Wallach}.

Note that \[
F^\vee_{\eta^\circ_v}=\tau_{\eta^\circ_v}'={\det}_{\iota_v'}\otimes 1_{\overline{\iota_v'}}\qquad \textrm{and}\qquad \tau_{\eta^\circ_v}\otimes \tau_{\eta^\circ_v}'\cong \tau_{n,v},
\]
where $\det_{\iota_v'}$  is the determinant character of $\GL_n(\C_{\iota_v'})$ and $1_{\overline{\iota_v'}}$ is the trivial representation of $\GL_n(\C_{\overline{\iota_v'}})$.  

    In general, the construction of $\jmath_{\eta,v}$ implies that
    \[
    \jmath_{\eta,v}(\tau_{\eta_v^\circ}\otimes F_{\eta_v^\circ}^{\vee})\subset\tau_{\eta_v}\otimes F_{\eta_v}^{\vee}
    \]
     which, in view of \eqref{dim11},  further implies that 
   \[
    \jmath_{\eta,v}(\tau_{\eta_v^\circ}\otimes \tau_{\eta_v^\circ}')\subset\tau_{\eta_v}\otimes \tau_{\eta_v}'.
    \]

   The lemma then follows by considering the commutative diagram
\[
\begin{CD}
		\tau_{n,v}@>\phi_0>>	\tau_{\eta_v^\circ}\otimes \tau_{\eta_v^\circ}' @>\subset >>I_{\eta_v^\circ}\otimes F^\vee_{\eta_v^\circ}\\
			@ V = VV @V\jmath_{\eta,v} VV @VV\jmath_{\eta,v} V\\
		\tau_{n,v}@>\phi >>	\tau_{\eta_v}\otimes \tau_{\eta_v}' @>\subset >>I_{\eta,v}\otimes F^\vee_{\eta,v}
		\end{CD},
    \]
    where $\phi_0$ is the map that makes the left square commutative. 
	\end{proof}

\begin{remarkp}\label{gr1}
The lemma above was originally stated in \cite[Proposition 8]{Gr}. We follow the idea in that paper, which uses the Jantzen--Zuckerman translation functor (see \cite{Jan79, Zu77,Vo81,VZ84} for further details). However, constructing the translation map requires more work than was done in \cite{Gr}. In Grenié's proof (paragraph 2 on p.~296), he selects an extremal weight vector $v_s$ of a certain weight and claims that it is the desired eigenvector for the action of $P(\C)$. This is not generally valid, because an extremal weight vector need not be a highest weight vector. The correct translation map is now constructed in \cite{local}.
\end{remarkp}

We have the obvious identification  
    \[
    \mathfrak{p}_{n,v}=\mathfrak{t}_{n,v}/\mathfrak{t}_{n,v}^{K}\oplus\mathfrak{n}_{n,v},
    \]
    where $\mathfrak{n}_{n,v}$, $\mathfrak{t}_{n,v}$, and $\mathfrak{t}_{n,v}^{K}$ are the complexified Lie algebras of $\RN_n(\rk_v)$, $\RT_n(\rk_v)$,  and $\RT_n(\rk_v)\cap\widetilde{K}_{n,v}$, respectively. Note that $\mathfrak{n}_{n,v}=\mathfrak{n}_{n}^{\iota_v'}\oplus\mathfrak{n}^{\overline{\iota_v'}}_{n}$, and
	\[
	e_{1,n}^{\iota_v'}\wedge e_{2,n}^{\iota_v'}\wedge\dots\wedge e_{n-1,n}^{\iota_v'}\in\wedge^{n-1}\mathfrak{n}^{\iota_v'}_{n}\subset\wedge^{n-1}\mathfrak{n}_{n,v}\subset \wedge^{n-1}\mathfrak{p}_{n,v}
	\]
	    is a highest weight vector of $\tau_{n,v}$.

Let
	\[
	\Upsilon(\eta_v):=\{\beta=(\beta_1,\dots,\beta_n)\in\Z_{\geq 0}^n:\beta_1+\dots+\beta_n=\eta_{\iota_v'}-\eta_{\overline{\iota_v'}}\}
	\]
	and for every $\beta\in\Upsilon(\eta_v)$ we define $\varphi_{\beta}\in I_{\eta,v}$ by
	\be \label{phibeta}
	\varphi_{\beta}(g):=\frac{\overline{\iota_v'}(g_{n,1})^{\beta_1}\cdots\overline{\iota_v'}(g_{n,n})^{\beta_n}}{\left(|g_{n,1}|_v+\dots+|g_{n,n}|_v\right)^{\eta_{\iota_v'}}},\qquad g=[g_{i,j}]_{1\leq i,j\leq n}\in\GL_n(\rk_v).
	\ee
	Then $\{\varphi_{\beta}\}_{\beta\in\Upsilon(\eta_v)}$ forms a basis of $\tau_{\eta_v}$ (see \cite[page 731]{IM22}). Specifically, $\varphi_{\beta_0}$ with 
    \[
    \beta_0:=(0,\dots,0,\eta_{\iota_v'}-\eta_{\overline{\iota_v'}})
    \]
    is a highest weight vector in $\tau_{\eta_v}$. Fix a highest weight vector $v^{\vee}_{\eta_v}\in F^{\vee}_{\eta,v}$ in a similar way to \eqref{vetavee}, and define the Weyl group element
	\be\label{wvn}
	w_v^{(n)}:=(w_0,1)\in \RW_n^{\iota_v'}\times \RW_n^{\overline{\iota_v'}},
	\ee
	where $w_0$ is the longest element of $\RW_n^{\iota_v'}$. Since $\tau_{\eta_v}$ is isomorphic to the Cartan component of $\tau_{n,v}\otimes\tau'^{\vee}_{\eta_v}$, there is a unique highest weight vector  of the PRV-component 
		of $\tau_{\eta_v}\otimes \tau_{\eta_v}'$ of the form
		\begin{equation}\label{lwv}
			\Psi_{\eta_v}:=\varphi_{\beta_0}\otimes w_v^{(n)}.v^{\vee}_{\eta_v}+\sum_{\beta\in\Upsilon(\eta_v),\, \beta\neq \beta_0}\varphi_{\beta}\otimes v_{\beta},\qquad v_{\beta}\in F_{\eta,v}^{\vee}.
		\end{equation}

In view of Lemma \ref{cohomi}, define a generator
	\[
	\kappa_{\eta_v}\in\mathrm{Hom}_{K_{n,v}}(\tau_{n,v},\tau_{\eta_v}\otimes \tau_{\eta_v}' )=\RH^{n-1}(\mathfrak{g}_{n,v},\widetilde{K}_{n,v};I_{\eta,v}\otimes F^\vee_{\eta,v})
	\]
	 such that
	\begin{equation}\label{kappa}
		\kappa_{\eta_v}\left(e_{1,n}^{\iota_v'}\wedge e_{2,n}^{\iota_v'}\wedge\dots\wedge e_{n-1,n}^{\iota_v'}\right)=\Psi_{\eta_v}.
	\end{equation}
Specializing to the case when $\eta_v=\eta^\circ_v$, we obtain  a generator
\[
\kappa_{\eta_v^\circ}\in\mathrm{Hom}_{K_{n,v}}(\tau_{n,v},\tau_{\eta^\circ_v}\otimes \tau_{\eta_v^\circ}' )=\RH^{n-1}(\mathfrak{g}_{n,v},\widetilde{K}_{n,v};I_{\eta_v^\circ}\otimes F^\vee_{\eta_v^\circ}).
\]

\begin{lemp}\label{lem:ind-trans}
    It holds that $\jmath_{\eta,v}(\kappa_{\eta_v^\circ}) = \kappa_{\eta_v}$.
\end{lemp}

\begin{proof}
    The explicit construction of $\jmath_{\eta,v}$ in \cite[Section 2.3]{local} shows that 
\[\jmath_{\eta,v}(\Psi_{\eta_v^\circ}) = \Psi_{\eta_v},
\]
which implies the lemma.
\end{proof}

\subsection{The explicit generator of $\RH^{c_n}(\mathfrak{g}_{n,\infty},\widetilde{K}_{n,\infty};I_{\eta,\infty}\otimes F^\vee_{\eta})$} \label{sec5.2}

Recall from the Introduction that $\rk_1$ is the maximal CM subfield of $\rk$, and we have fixed compatible CM types of $\rk_1$ and $\rk$. The field $\rk_1$ is a totally  imaginary quadratic extension of the maximal totally real subfield $\rk_0$ in $\rk$. Denote by $\CE_{\rk_0}$ and $\CE_{\rk_1}$ the sets of field embeddings $\rk_0\hookrightarrow\C$ and $\rk_1\hookrightarrow\C$, respectively.

Fix total orders $\prec$ on $\CE_{\rk_0}$ 
and $\CE_{\rk}$ such that 
\begin{itemize}
    \item 
    the restriction map $\CE_{\rk}\to \CE_{\rk_0}$ is order-preserving, namely  for all $\iota, \iota'\in \CE_{\rk}$,
    \[ 
    \iota \prec \iota'\quad \textrm{implies}\quad \iota|_{\rk_0}=\iota'|_{\rk_0}\textrm{ or } \iota|_{\rk_0}\prec \iota'|_{\rk_0};
    \]
    \item $\iota_v\prec \bar\iota_v$ for every archimedean place $v$ of $\rk$, and there is no $\iota\in \CE_\rk$ satisfying that $\iota_v\prec \iota \prec \bar\iota_v$.
    \end{itemize}
For every $\tau\in\CE_{\rk_1}$, we denote 
\[
\CE_{\rk}(\tau):=\{\iota\in\CE_{\rk}:\iota|_{\rk_1}=\tau\}.
\]
Then it clear that the map 
\[
	\CE_{\rk}(\tau)\to\CE_{\rk}(\overline{\tau}),\qquad \iota\mapsto\overline{\iota}
	\]
    is order-preserving.

Note that the set $\{v| \infty\}$ of archimedean places of $\rk$ is naturally in bijection with  $\{\iota_v :  v|\infty\}$. We endow $\{v |\infty\}$ with the total order induced from that on $\{\iota_v :  v|\infty\}$. With respect to this fixed order, the Künneth formula yields an identification
\[
\RH^{c_n}(\mathfrak{g}_{n,\infty},\widetilde{K}_{n,\infty};I_{\eta,\infty}\otimes F^\vee_{\eta})
=
\bigotimes_{v \mid \infty} \RH^{n-1}(\mathfrak{g}_{n,v},\widetilde{K}_{n,v};I_{\eta,v}\otimes F^\vee_{\eta,v}).
\]
We further define a generator
    \begin{equation}
    \label{generator'}
    \begin{aligned}
\kappa_{\eta}:=\otimes_{v|\infty}\kappa_{\eta_v}\in\,&\bigotimes_{v|\infty}\RH^{n-1}(\mathfrak{g}_{n,v},\widetilde{K}_{n,v};I_{\eta,v}\otimes F^\vee_{\eta,v})\\
        =\,&\RH^{c_n}(\mathfrak{g}_{n,\infty},\widetilde{K}_{n,\infty};I_{\eta,\infty}\otimes F^\vee_{\eta}).
    \end{aligned}
    \end{equation}

Put $\eta^\circ:=\otimes_{v\mid\infty} \eta^\circ_v$, $I_{\eta^\circ}:=\widehat\otimes_{v\mid\infty} I_{\eta_v^\circ}$, $F_{\eta^\circ}:=\otimes_{v\mid\infty} F_{\eta_v^\circ}$, and
\begin{equation}\label{generator0}
\begin{aligned}
\kappa_{\eta^{\circ}}:=\otimes_{v|\infty}\kappa_{\eta^{\circ}_v}\in\,&\bigotimes_{v|\infty}\RH^{n-1}(\mathfrak{g}_{n,v},\widetilde{K}_{n,v};I_{\eta^{\circ}_v}\otimes F^\vee_{\eta^{\circ}_v}).
    \end{aligned}
\end{equation}
We have a translation map
        \[
        \jmath_\eta :=\otimes_{v\mid\infty}\jmath_{\eta,v}: I_{\eta^\circ}\otimes F_{\eta^\circ}^\vee \to I_{\eta,\infty}\otimes F_{\eta}^\vee,
        \]
and we will also write $\jmath_\eta$ for the induced isomorphism between the corresponding cohomology spaces.  Then by Lemma \ref{lem:ind-trans} we have 
   \be \label{ind-trans}
   \jmath_{\eta}(\kappa_{\eta^{\circ}})=\kappa_{\eta}.
\ee

Denote by $\{e_{i,j}^{\iota\ast}\}_{1\leq i<j\leq n,\iota\in\CE_{\rk}}$ the dual basis of $\mathfrak{n}_{n,\infty}^{\ast}$ with respect to the basis $\{e_{i,j}^{\iota}\}_{1\leq i<j\leq n,\iota\in\CE_{\rk}}$ of $\mathfrak{n}_{n,\infty}$. Recall from \eqref{wk} and \eqref{wvn} the element 
\[
  w^{(n)}:=\{w^{(n)\iota}\}_{\iota\in\mathcal{E}_{\rk}}=\{w^{(n)}_v\}_{v\mid \infty}\in\RW_{n,\infty}. 
\]
Using \eqref{kappa},the following lemma is routine to verify. 

    \begin{lemp}\label{kappaeta}
        Under the isomorphism (see \eqref{delorme}) 
        \[
\RH^{c_n}(\mathfrak{g}_{n,\infty},\widetilde{K}_{n,\infty};I_{\eta,\infty}\otimes F^\vee_{\eta})\xrightarrow{\sim} \wedge^{c_n}\frak n_{ w^{(n)}}^*\otimes \BC (w^{(n)}.v_{\eta}^{\vee}),
\]
$\kappa_\eta$ corresponds to the element 
\[
  \omega(\kappa_\eta):=\bigwedge_{\substack{\iota\in\CE_{\rk}\\\eta_{\iota}\geq n}}\left(e_{1,n}^{\iota\ast}\wedge\cdots\wedge e^{\iota\ast}_{n-1,n}\right)\otimes w^{(n)}.v_{\eta}^{\vee}\in\wedge^{c_n}\mathfrak{n}^{\ast}_{w^{(n)}}\otimes\C (w^{(n)}.v_{\eta}^{\vee}),
\]
where the wedge products are taken with respect to the fixed total order of $\CE_{\rk}$ . 
    \end{lemp} 

\subsection{$\mathrm{Aut}(\C)$-action on $\kappa_{\eta}$}

Let $\sigma\in\mathrm{Aut}(\C)$. We have defined in \eqref{sigmacohomologyinfty} a $\sigma$-linear map 
\[
\sigma:\RH^{c_n}(\mathfrak{g}_{n,\infty},\widetilde{K}_{n,\infty};I_{\eta,\infty}\otimes F^\vee_{\eta})\to\RH^{c_n}(\mathfrak{g}_{n,\infty},\widetilde{K}_{n,\infty};I_{^\sigma\eta,\infty}\otimes F^\vee_{^\sigma\eta}).
\]
In this subsection, we describe the image $\sigma(\kappa_{\eta})$ of our fixed generator
\[
\kappa_{\eta}\in\RH^{c_n}(\mathfrak{g}_{n,\infty},\widetilde{K}_{n,\infty};I_{\eta,\infty}\otimes F^\vee_{\eta}).
\]

Note that the diagram
\[
\begin{CD}
	\CE_{\rk} @>>>\CE_{\rk_1}@>>>\CE_{\rk_0}\\
	@V\sigma\circ(\cdot) VV  @V \sigma\circ(\cdot)VV @VV \sigma\circ(\cdot) V\\
	\CE_{\rk} @>>>\CE_{\rk_1}@>>>\CE_{\rk_0}\\
\end{CD}
\]
commutes for every $\sigma\in\mathrm{Aut}(\C)$, where the horizontal arrows are the restrictions (which are obviously surjective). We view $\sigma\in\mathrm{Aut}(\C)$ as the permutation $\sigma\circ(\cdot)$ on $\CE_{\rk}$, and (uniquely) decompose it as a product $\sigma=\sigma_2\sigma_1$ of permutations such that
\begin{itemize}
	\item $\sigma_1$ descends to the permutation $\sigma\circ(\cdot)$ on $\CE_{\rk_1}$, and induces an order-preserving bijection $\sigma_1:\CE_{\rk}(\tau)\mapsto\CE_{\rk}(\sigma\circ\tau)$ for each $\tau\in\CE_{\rk_1}$;
	\item $\sigma_2$ descends to the trivial permutation on $\CE_{\rk_1}$.
\end{itemize}
Denote by $p_0(\sigma)$ the permutation number of the permutation $\sigma\circ(\cdot)$ on the totally ordered set $\CE_{\rk_0}$. For every $\tau\in\CE_{\rk_1}$, we define $p(\sigma,\tau)$ to be the permutation number of $\sigma_2$ on the totally ordered set $\CE_{\rk}(\tau)$.

For all number field extensions $\rk''/\rk'$, we define the discriminant of $\rk''$ over $\rk'$ to be
    \[
\delta_{\rk''/\rk'}:=\det\left[\mathrm{tr}_{\rk''/\rk'}(x_ix_j)\right]_{1\leq i,j\leq [\rk'':\rk']}\in\rk'^{\times},
    \]
where $\{x_i\}_{1\leq i\leq [\rk'':\rk']}$ is a fixed basis of $\rk''$ over $\rk'$, and $\mathrm{tr}_{\rk''/\rk'}$ is the trace map. Up to multiplication by scalars in $(\rk'^{\times})^2$, this is independent of the choice of the basis.

Define 
	\begin{equation}\label{Delta}
		\Delta_{\rk}:=\sqrt{\RN_{\rk_0/\BQ}(\delta_{\rk_1/\rk_0})}^{[\rk:\rk_1]},\qquad\nabla_{\rk}:=\sqrt{\RN_{\rk_1/\BQ}(\delta_{\rk/\rk_1})},
	\end{equation}
    where $\RN_{\rk_0/\BQ}:\rk_0\rightarrow\BQ$ and  $\RN_{\rk_1/\BQ}:\rk_1\to\BQ$ denote the norm maps.

Denote by $\delta_{\rk_0}$ and $\delta_{\rk}$ the discriminants of $\rk_0$ and $\rk$ respectively. By \cite[Lemma 5.27]{R-imaginary}, we have that
\begin{equation}\label{discriminant}
	|\delta_{\rk}|^{\frac{1}{2}}=c\cdot\mathrm{i}^{\frac{[\rk:\BQ]}{2}}\cdot\Delta_{\rk}\cdot\nabla_{\rk}
\end{equation}
for some $c\in\BQ^{\times}$. Fix a basis $\alpha_1,\alpha_2,\dots,\alpha_{[\rk:\rk_1]}$ of $\rk$ over $\rk_1$. For each $\tau\in\CE_{\rk_1}$, we form a matrix
\[
D(\rk/\rk_1,\tau):=\begin{bmatrix}
	\iota_i(\alpha_j)
\end{bmatrix}_{1\leq i,j\leq [\rk:\rk_1]},\qquad\text{where}\qquad\mathcal{E}_{\rk}(\tau)=\{\iota_1\prec\iota_2\prec\cdots\prec\iota_{[\rk:\rk_1]}\},
\]
and define
\[
\delta(\rk/\rk_1,\tau):=\det\left(D(\rk/\rk_1,\tau)\right)\in\C^{\times}.
\]
Up to multiplication by scalars in $\tau(\rk_1^{\times})$, this is independent of the choice of the basis. It is easy to see that
\begin{equation}\label{dk0}
\frac{\sigma(\delta_{\rk_0})}{\delta_{\rk_0}}=(-1)^{p_0(\sigma)},\qquad\text{and}\qquad \frac{\sigma(\delta(\rk/\rk_1,\tau))}{\delta(\rk/\rk_1,\sigma\circ\tau)}=(-1)^{p(\sigma,\tau)},\,\tau\in\CE_{\rk_1}.
\end{equation}
By \cite[Sublemma 5.35]{R-imaginary}, we have that
\begin{equation}\label{nabla}
	\frac{\sigma(\nabla_{\rk})}{\nabla_{\rk}}=(-1)^{\sum_{\tau\in\CE_{\rk_1}}p(\sigma,\tau)}.
\end{equation}

 For every $\tau\in\CE_{\rk_1}$, set
\[
\eta_{\tau}:=\eta_{\iota}\in\Z, \quad \textrm{where }\ \iota\in\CE_{\rk}(\tau).
\]
As a consequence of the purity lemma (\cite{We55} and \cite[Lemma 7]{R-Hecke}), this is independent of the choice of $\iota \in \CE_{\rk}(\tau)$. The regularity condition \eqref{regular} implies that \[
\textrm{$\eta_{\tau}\leq 0\quad $ or $\quad \eta_{\tau}\geq n$. }
\]
Define
\begin{equation}\label{omegaeta}
\delta_{\infty}(\eta):=\delta_{\rk_0}\cdot\prod_{\substack{\tau\in\CE_{\rk_1}\\\eta_{\tau}\geq n}}\delta(\rk/\rk_1,\tau).
\end{equation}

\begin{lemp}\label{sigmakappaeta}
	For every $\sigma\in\mathrm{Aut}(\C)$, it holds that 
	\[
	\sigma\left(\frac{\kappa_{\eta}}{\delta_{\infty}(\eta)^{n-1}}\right)=\frac{\kappa_{^\sigma\eta}}{\delta_{\infty}(^\sigma\eta)^{n-1}}.
	\]
\end{lemp}

\begin{proof}
Recall the element ${^\sigma w^{(n)}}$ from \eqref{w317}.  
In the notation of Lemma \ref{kappaeta}, we have that 
\[
\sigma(\omega(\kappa_{\eta}))=\bigwedge_{\substack{\iota\in\CE_{\rk}\\\eta_{\iota}\geq n}}\left(e_{1,n}^{\sigma\circ \iota \ast}\wedge\cdots\wedge e^{\sigma\circ \iota\ast}_{n-1,n}\right)\otimes {^\sigma w^{(n)}}.v_{^\sigma\eta}^{\vee},
	\]
	which differs from $\omega(\kappa_{^\sigma\eta})$ by a sign. To calculate this sign, we view $\sigma$ as a permutation on $\CE_{\rk}$ and decompose it as a product $\sigma=\sigma_2\sigma_1$ of permutations as before. It is straightforward to check that
	\[
	\bigwedge_{\substack{\iota\in\CE_{\rk}\\\eta_{\iota}\geq n}}\left(e_{1,n}^{\sigma_1(\iota)\ast}\wedge\cdots\wedge e^{\sigma_1(\iota)\ast}_{n-1,n}\right)=(-1)^{(n-1)p_0(\sigma)}\cdot\bigwedge_{\substack{\iota\in\CE_{\rk}\\\eta_{\sigma_1^{-1}(\iota)}\geq n}}\left(e_{1,n}^{\iota\ast}\wedge\cdots\wedge e^{\iota\ast}_{n-1,n}\right)
	\]
	and
	\[
	\bigwedge_{\substack{\iota\in\CE_{\rk}\\\eta_{\iota}\geq n}}\left(e_{1,n}^{\sigma_2(\iota)\ast}\wedge\cdots\wedge e^{\sigma_2(\iota)\ast}_{n-1,n}\right)=\prod_{\substack{\tau\in\CE_{\rk_1}\\\eta_{\tau}\geq n}}(-1)^{(n-1)p(\sigma,\tau)}\cdot\bigwedge_{\substack{\iota\in\CE_{\rk}\\\eta_{\sigma_2^{-1}(\iota)}\geq n}}\left(e_{1,n}^{\iota\ast}\wedge\cdots\wedge e^{\iota\ast}_{n-1,n}\right).
	\]
	Therefore,
	\[
	\sigma(\omega(\kappa_{\eta}))=(-1)^{(n-1)p_0(\sigma)}\cdot\prod_{\substack{\tau\in\CE_{\rk_1}\\\eta_{\tau}\geq n}}(-1)^{(n-1)p(\sigma,\tau)}\cdot\omega(\kappa_{^\sigma\eta}),
	\]
	and the lemma follows by \eqref{dk0} and \eqref{delormesigma}.
\end{proof}

	\subsection{Cohomological intertwining operators}
	
	Recall the local normalized intertwining operator
    \[
    \CN_v(w_k):=\CN_v(0,w_k):I_{\eta,v}\to I_{\eta,v}^{\RB,(k)}\qquad (1\leq k\leq n)
    \]
	defined in \eqref{normalize}. Put
    \[
    \CN(w_k):=\otimes_{v}\CN_v(w_k),\qquad\CN_{\mathrm{f}}(w_k):=\otimes_{v\nmid\infty}\CN_v(w_k),\qquad\CN_{\infty}(w_k):=\otimes_{v|\infty}\CN_v(w_k).
    \]
    We further define
\[
\begin{aligned}
    \CN'(w_k)&:=(\mathrm{i}^{\frac{[\rk:\BQ]}{2}}\cdot\Delta_{\rk})^{k-n}\cdot\CN(w_k):I_{\eta}\to I_{\eta}^{\RB,(k)},\\
    \CN'_{\infty}(w_k)&:=(\mathrm{i}^{\frac{[\rk:\BQ]}{2}}\cdot\Delta_{\rk})^{k-n}\cdot\CN_{\infty}(w_k):I_{\eta,\infty}\to I_{\eta,\infty}^{\RB,(k)}.
\end{aligned}
\]
	On the cohomological level, the normalized intertwining operators $\CN(w_k)$ and $\CN_{\infty}(w_k)$ induce
	\begin{equation}\label{cohomologyintertwine}
    \begin{aligned}
        \mathcal{N}(w_k):	&\RH^{c_n}(\mathfrak{g}_{n,\infty},\widetilde{K}_{n,\infty};I_{\eta}\otimes F^\vee_{\eta})\to	\RH^{c_n}(\mathfrak{g}_{n,\infty},\widetilde{K}_{n,\infty};I_{\eta}^{\RB,(k)}\otimes F^\vee_{\eta}),\\
        \mathcal{N}_{\infty}(w_k):	&\RH^{c_n}(\mathfrak{g}_{n,\infty},\widetilde{K}_{n,\infty};I_{\eta,\infty}\otimes F^\vee_{\eta})\to	\RH^{c_n}(\mathfrak{g}_{n,\infty},\widetilde{K}_{n,\infty};I_{\eta,\infty}^{\RB,(k)}\otimes F^\vee_{\eta}),
    \end{aligned}
	\end{equation}
and similarly for $\CN'(w_k)$ and  $\CN_{\infty}'(w_k)$. This subsection is devoted to a proof of the following proposition.

	\begin{prp}
		\label{prop-intertwine}
		The map $\CN'(w_k)$ is $\mathrm{Aut}(\C)$-equivariant. That is, the diagram
		\begin{equation}
			\begin{CD}
				\RH^{c_n}(\mathfrak{g}_{n,\infty},\widetilde{K}_{n,\infty};I_{\eta}\otimes F^\vee_{\eta}) @>\CN'(w_k)>>\RH^{c_n}(\mathfrak{g}_{n,\infty},\widetilde{K}_{n,\infty};I_{\eta}^{\RB,(k)}\otimes F^\vee_{\eta})\\
				@V\sigma VV @VV\sigma V\\
				\RH^{c_n}(\mathfrak{g}_{n,\infty},\widetilde{K}_{n,\infty};I_{{^{\sigma}\eta}}\otimes F^\vee_{{^{\sigma}\eta}}) @>\CN'(w_k)>>\RH^{c_n}(\mathfrak{g}_{n,\infty},\widetilde{K}_{n,\infty};I_{{^{\sigma}\eta}}^{\RB,(k)}\otimes F^\vee_{{^{\sigma}\eta}})
			\end{CD}
		\end{equation}
		commutes for every $\sigma\in\mathrm{Aut}(\C)$.
	\end{prp}

	It suffices to show the  $\mathrm{Aut}(\C)$-equivariance of
	\begin{equation}\label{archimedean}
		\CN'_{\infty}(w_k):\RH^{c_n}(\mathfrak{g}_{n,\infty},\widetilde{K}_{n,\infty};I_{\eta,\infty}\otimes F^\vee_{\eta})\to\RH^{c_n}(\mathfrak{g}_{n,\infty},\widetilde{K}_{n,\infty};I_{\eta,\infty}^{\RB,(k)}\otimes F^\vee_{\eta})
	\end{equation}
	and
	\begin{equation}\label{nonarchimedean}
		\CN_{\mathrm{f}}(w_k):I_{\eta,\mathrm{f}}\to I_{\eta,\mathrm{f}}^{\RB,(k)}.
	\end{equation}

    The $\mathrm{Aut}(\mathbb{C})$-equivariance of \eqref{nonarchimedean} follows by the same argument as in \cite[Section 7.3.2.1]{HRbook}, using Waldspurger's rationality result \cite[Theorem IV.1.1]{Wa03}. It remains to prove the $\mathrm{Aut}(\mathbb{C})$-equivariance of \eqref{archimedean}.


	\begin{lemp}\label{lem-intertwine}
   Let $v$ be an archimedean place of $\rk$.  For all $\beta\in\Upsilon(\eta_v)$ and $k=1,2,\dots,n$, it holds that 
		\[
		(\mathcal{N}_v(w_k)\varphi_{\beta})(w_k) =  \begin{cases}
			c_v^{n-k},\quad& \textrm{if }\beta=\beta_0:=(0,\dots,0,\eta_{\iota_v'}-\eta_{\overline{\iota_v'}}),\\
			0,\quad& \textrm{otherwise,}
		\end{cases}
		\]
		where
\be \label{cv}
c_v:= \frac{1}{2\pi}\mathrm{vol}\left(\{ x\in \rk_v: |x|_v\leq 1\}\right),
\ee
and `$\,\mathrm{vol}$' indicates the volume with respect to the fixed Haar measure of $\rk_v$. 
	\end{lemp}
	
	\begin{proof}
For each $s\in \C$, define $\varphi_{\beta,s}\in I_{\eta,s,v}$ by 
\[
\varphi_{\beta,s}(g):=\frac{\overline{\iota_v'}(g_{n,1})^{\beta_1}\cdots\overline{\iota_v'}(g_{n,n})^{\beta_n}}{\left(|g_{n,1}|_v+\cdots+|g_{n,n}|_v\right)^{\eta_{\iota_v'}+ns}},\qquad   g=[g_{i,j}]_{1\leq i,j\leq n}\in\GL_n(\rk_v),
\]
so that 
\[
(\mathcal{N}_v(w_k)\varphi_{\beta})(w_k)=(\mathcal{N}_v(s,w_k)\varphi_{\beta,s})(w_k)|_{s=0}.
\]

By definition, we have that
		\[
		(\mathcal{M}_v(s,w_k)\varphi_{\beta,s})(w_k)=\int_{\rk_v}\dots\int_{\rk_v}\varphi_{\beta,s}\begin{pmatrix}
			1_{k-1}\\
			& 1\\
			& & \ddots \\
			& & & 1\\
			& u_k & \cdots & u_{n-1} & 1	\end{pmatrix}\mathrm{d}u_k\cdots\mathrm{d}u_{n-1}
		\]
        when $\Re(s)$ is sufficiently large such that the integral converges absolutely. Write $\beta=(\beta_1,\beta_2, \dots, \beta_n)$. 
       It is clear that the above integrand vanishes unless $\beta_1=\dots=\beta_{k-1}=0$. Assume that $\beta_1=\dots=\beta_{k-1}=0$. Then 
		\[
		(\mathcal{M}_v(s,w_k)\varphi_{\beta,s})(w_k)=\int_{\rk_v}\dots\int_{\rk_v}\frac{\overline{\iota_v'}(u_k)^{\beta_k}\cdots\overline{\iota_v'}(u_{n-1})^{\beta_{n-1}}}{\left(|u_k|_v+\cdots+|u_{n-1}|_v+1\right)^{\eta_{\iota_v'}+ns}}\mathrm{d}u_k\cdots\mathrm{d}u_{n-1}.
		\]
		Using polar coordinates $\overline{\iota_v'}(u_k)=re^{\mathrm{i}\theta}$, we have 
		\[
        \begin{aligned}
         & \int_{\rk_v}\frac{\overline{\iota_v'}(u_k)^{\beta_k}}{\left(|u_k|_v+\cdots+|u_{n-1}|_v+1\right)^{\eta_{\iota_v'}+ns}}\mathrm{d}u_k \\
	    =\, & c_v\cdot\int_0^{2\pi}\int_0^{\infty}\frac{(re^{\mathrm{i}\theta})^{\beta_k}\cdot 2r\mathrm{d}r\mathrm{d}\theta}{(r^2+|u_{k+1}|_v+\dots+|u_{n-1}|_v+1)^{\eta_{\iota_v'}+ns}} \\
        =\, & \begin{cases} c_v\cdot\dfrac{\RL(ns-1,\eta_v)}{\RL(ns,\eta_v)}\cdot\dfrac{1}{\left(|u_{k+1}|_v+\dots+|u_{n-1}|_v+1\right)^{\eta_{\iota_v'}-1+ns}}, & \text{if }\beta_k=0,\\
        0, & \text{if }\beta_k>0.
        \end{cases}
        \end{aligned}
        \]
           Repeating the same calculation, we find that 
		\[
			(\mathcal{M}_v(s,w_k)\varphi_{\beta_0,s})(w_k)= \begin{cases} c_v^{n-k}\cdot\dfrac{\RL(ns+k-n,\eta_v)}{\RL(ns,\eta_v)}, & \text{if }\beta = \beta_0, \\
            0, & \text{otherwise}.
	  \end{cases}
		\]
		This proves the lemma.
	\end{proof}

Recall the generator $\kappa_{\eta}$ defined in \eqref{generator'}. For $k=1,2,\dots,n$, we define
\[
\kappa_{\eta}^{(k)}:=\mathcal{N}_{\infty}(w_k)(\kappa_{\eta})\in\RH^{c_n}(\mathfrak{g}_{n,\infty},\widetilde{K}_{n,\infty};I^{\RB,(k)}_{\eta,\infty}\otimes F^\vee_{\eta}).
\]
Note that $\kappa_{\eta}^{(n)}=\kappa_{\eta}$, under the identification given by Lemma \ref{isoiib}.

    \begin{lemp}\label{kappaetak}
   For $k=1,2,\dots,n$, the image of $\kappa^{(k)}_{\eta}$ under \eqref{delorme} is
  \[
\begin{aligned}
   \omega(\kappa^{(k)}_{\eta})
   :& = |\delta_{\rk}|^{\frac{k-n}{2}}\cdot\bigwedge_{\substack{\iota\in\CE_{\rk}\\\eta_{\iota}\geq n}}\left( e_{1,k}^{\iota\ast}\wedge\cdots\wedge e_{k-1,k}^{\iota\ast}\wedge e_{k,k+1}^{\bar\iota\ast}\wedge\cdots\wedge e_{k,n}^{\bar\iota\ast}\right)\otimes w^{(k)}.v_{\eta}^{\vee}\\
   & \in\wedge^{c_n}\frak{n}_{w^{(k)}}^*\otimes \C w^{(k)}.v_{\eta}^{\vee},
\end{aligned}
\]
where the wedge product is taken with respect to the fixed total order of $\CE_{\rk}$. 
    \end{lemp}

\begin{proof}
For each archimedean place $v$ of $\rk$, let $\kappa_{\eta_v}$ be the generator defined by \eqref{kappa}, and define
    \[
    \kappa_{\eta_v}^{(k)}:=\CN_v(w_k)(\kappa_{\eta_v})\in\RH^{n-1}(\mathfrak{g}_{n,v},\widetilde{K}_{n,v};I_{\eta,v}^{\RB,(k)}\otimes F^\vee_{\eta,v})
    \]
    to be the image of $\kappa_{\eta_v}$ under the intertwining operator
    \[
    \CN_v(w_k):\RH^{n-1}(\mathfrak{g}_{n,v},\widetilde{K}_{n,v};I_{\eta,v}\otimes F^\vee_{\eta,v})\to\RH^{n-1}(\mathfrak{g}_{n,v},\widetilde{K}_{n,v};I_{\eta,v}^{\RB,(k)}\otimes F^\vee_{\eta,v}).
    \]
    Then $\kappa_{\eta}^{(k)}=\otimes_{v|\infty}\kappa_{\eta_v}^{(k)}$. By Lemma \ref{cohomi} and \eqref{kappa},
    $\kappa_{\eta_v}^{(k)}$ is represented by a cocycle 
    \[
    \tilde\kappa_{\eta_v}^{(k)}\in\mathrm{Hom}_{K_{n,v}}\left(\wedge^{n-1}\mathfrak{p}_{n,v},I_{\eta,v}^{\RB,(k)}\otimes F^\vee_{\eta,v}\right)
    \]
    with the property that
    \begin{equation}\label{kappa'}
		\begin{aligned}
		    & \tilde\kappa_{\eta_v}^{(k)}\left(e_{1,n}^{\iota_v'}\wedge e_{2,n}^{\iota_v'}\wedge\dots\wedge e_{n-1,n}^{\iota_v'}\right)\\
            =\, &\CN_v(w_k)\varphi_{\beta_0}\otimes w_v^{(n)}.v^{\vee}_{\eta_v}+\sum_{\beta\in\Upsilon(\eta_v),\,\beta\neq\beta_0}\CN_v(w_k)\varphi_{\beta}\otimes v_{\beta}.
		\end{aligned}
	\end{equation}

Note that
\[
\bigwedge_{\substack{\iota\in\CE_{\rk}\\\eta_{\iota}\geq n}}\left( e_{1,k}^{\iota\ast}\wedge\cdots\wedge e_{k-1,k}^{\iota\ast}\wedge e_{k,k+1}^{\bar\iota\ast}\wedge\cdots\wedge e_{k,n}^{\bar\iota\ast}\right)\otimes w^{(k)}.v_{\eta}^{\vee}
\]
is a generator of $\wedge^{c_n}\frak{n}_{w^{(k)}}^*\otimes \C w^{(k)}.v_{\eta}^{\vee}$ and we calculate that
\[
\begin{aligned}
		    & \tilde\kappa_{\eta_v}^{(k)}\left( e_{1,k}^{\iota_v'}\wedge\cdots\wedge e_{k-1,k}^{\iota_v'}\wedge e_{k,k+1}^{\overline{\iota_v'}}\wedge\cdots\wedge e_{k,n}^{\overline{\iota_v'}}\right)\\
            =\, &\tilde\kappa_{\eta_v}^{(k)}\left(w_k.(e_{1,n}^{\iota_v'}\wedge e_{2,n}^{\iota_v'}\wedge\dots\wedge e_{n-1,n}^{\iota_v'})\right)\\
            =\, &w_k.(\CN_v(w_k)\varphi_{\beta_0})\otimes w_kw_v^{(n)}.v^{\vee}_{\eta_v}+\sum_{\beta\in\Upsilon(\eta_v),\,\beta\neq\beta_0}w_k.(\CN_v(w_k)\varphi_{\beta})\otimes w_k.v_{\beta}.
		\end{aligned}
\]
Using Lemma~\ref{lem-intertwine}, it is routine to verify that the image of $\kappa^{(k)}_{\eta}$ under \eqref{delorme} is
\[
\begin{aligned}
\prod_{v|\infty}c_v^{n-k}\cdot\bigwedge_{\substack{\iota\in\CE_{\rk}\\\eta_{\iota}\geq n}}\left( e_{1,k}^{\iota\ast}\wedge\cdots\wedge e_{k-1,k}^{\iota\ast}\wedge e_{k,k+1}^{\bar\iota\ast}\wedge\cdots\wedge e_{k,n}^{\bar\iota\ast}\right)\otimes w^{(k)}.v_{\eta}^{\vee},
\end{aligned}
\]
where $c_v$ is defined in \eqref{cv}.

Recall the measure on $\rk_v$ ($v|\infty$) fixed in Section~\ref{sec2.2}. A direct computation gives
\[
\prod_{v\mid\infty} c_v = |\delta_{\rk}|^{-\frac{1}{2}}.
\]
This completes the proof of the lemma.
\end{proof}

\begin{lemp}\label{sigmakappaetak}
	For every $\sigma\in\mathrm{Aut}(\C)$, it holds that 
	\[
	\sigma\left(\frac{|\delta_{\rk}|^{n-k}\cdot\kappa^{(k)}_{\eta}}{\nabla_{\rk}^{n-k}\cdot\delta_{\infty}(\eta)^{n-1}}\right)=\frac{|\delta_{\rk}|^{n-k}\cdot\kappa^{(k)}_{^\sigma\eta}}{\nabla_{\rk}^{n-k}\cdot\delta_{\infty}(^\sigma\eta)^{n-1}}.
	\]
\end{lemp}

\begin{proof}
    Using Lemma \ref{kappaetak}, the same computation as in the proof of Lemma \ref{sigmakappaeta} shows that
	\[
	\sigma(|\delta_{\rk}|^{n-k}\cdot\kappa_{\eta}^{(k)})=(-1)^{(n-1)p_0(\sigma)}\cdot\prod_{\substack{\tau\in\CE_{\rk_1}\\\eta_{\tau}\geq n}}(-1)^{(n-1)p(\sigma,\tau)}\cdot\prod_{\tau\in\CE_{\rk_1}}(-1)^{(n-k)p(\sigma,\tau)}\cdot|\delta_{\rk}|^{n-k}\cdot\kappa_{^\sigma\eta}^{(k)}.
	\]
	The lemma then follows by \eqref{dk0}, \eqref{nabla} and \eqref{delormesigma}.
\end{proof}

Comparing Lemmas \ref{sigmakappaeta} and \ref{sigmakappaetak}, and using \eqref{discriminant}, we obtain that the diagram
\[
	\begin{CD}
	\RH^{c_n}(\mathfrak{g}_{n,\infty},\widetilde{K}_{n,\infty};I_{\eta,\infty}\otimes F^\vee_{\eta}) @>\CN_{\infty}'(w_k)>>\RH^{c_n}(\mathfrak{g}_{n,\infty},\widetilde{K}_{n,\infty};I_{\eta,\infty}^{\RB,(k)}\otimes F^\vee_{\eta})\\
	@V\sigma VV @VV\sigma V\\
	\RH^{c_n}(\mathfrak{g}_{n,\infty},\widetilde{K}_{n,\infty};I_{{^{\sigma}\eta,\infty}}\otimes F^\vee_{{^{\sigma}\eta}}) @>\CN_{\infty}'(w_k)>>\RH^{c_n}(\mathfrak{g}_{n,\infty},\widetilde{K}_{n,\infty};I_{{^{\sigma}\eta,\infty}}^{\RB,(k)}\otimes F^\vee_{{^{\sigma}\eta}})
\end{CD}
\]
commutes for every $\sigma\in\mathrm{Aut}(\C)$, which completes the proof of Proposition \ref{prop-intertwine}.

	\subsection{Rationality of the Eisenstein cohomology}
	
Consider the composition of
    \[
	\mathrm{Eis}_{\eta}:\RH^{c_n}(\mathfrak{g}_{n,\infty},\widetilde{K}_{n,\infty};I_{\eta}\otimes F^\vee_{\eta})\to\RH^{c_n}(\mathcal{X}_n,F^\vee_{\eta})
    \]
 with the restriction map
	\begin{equation}
		r_{\RB_n}:\RH^{c_n}(\mathcal{X}_n,F^\vee_{\eta})\to\RH^{c_n}(\partial_{\RB_n}\mathcal{X}_n,F^\vee_{\eta}).
	\end{equation}
	
	\begin{prp}\label{mainprop}
		The diagram
		\begin{equation}\label{diagramproof}
			\begin{CD}
				\RH^{c_n}(\mathfrak{g}_{n,\infty},\widetilde{K}_{n,\infty};I_{\eta}\otimes F^\vee_{\eta}) @>r_{\RB_n}\circ\mathrm{Eis}_{\eta}>>\RH^{c_n}(\partial_{\RB_n}\mathcal{X}_n,F^\vee_{\eta})\\
				@V\sigma VV @VV\sigma V\\
				\RH^{c_n}(\mathfrak{g}_{n,\infty},\widetilde{K}_{n,\infty};I_{^{\sigma}\eta}\otimes F^\vee_{^{\sigma}\eta}) @>r_{\RB_n}\circ\mathrm{Eis}_{^{\sigma}\eta}>>\RH^{c_n}(\partial_{\RB_n}\mathcal{X}_n,F^\vee_{^{\sigma}\eta})
			\end{CD}
		\end{equation}
		commutes for every $\sigma\in\mathrm{Aut}(\C)$.
	\end{prp}
	
	\begin{proof}

Define a map
\begin{equation}\label{318A}
\begin{aligned}
I_{\eta}&\to\CA(\RB_n(\rk)\RN_n(\A)\backslash\GL_n(\A)),\\
\varphi&\mapsto\left(g\mapsto\int_{\RN_n(\rk)\backslash\RN_n(\A)}\mathrm{Eis}_{\eta}(\varphi)(ug)\mathrm{d}u\right).
\end{aligned}
\end{equation}
It follows from Lemma \ref{lemct} that its image lies in 
\[
\bigoplus_{k=1}^nI_{\eta}^{\RB,(k)}\subset\CA(\RB_n(\rk)\RN_n(\A)\backslash\GL_n(\A)),
\]
and hence \eqref{318A} induces a map
\begin{equation}\label{318B}
\RH^{c_n}(\mathfrak{g}_{n,\infty},\widetilde{K}_{n,\infty};I_{\eta}\otimes F^\vee_{\eta})\to\bigoplus_{k=1}^n\RH^{c_n}(\mathfrak{g}_{n,\infty},\widetilde{K}_{n,\infty};I_{\eta}^{\RB,(k)}\otimes F^\vee_{\eta}).
\end{equation}

By \cite[Satz 1.10]{Sch83}, the map $r_{\RB_n}\circ\mathrm{Eis}_{\eta}$ in \eqref{diagramproof} equals the composition of \eqref{318B} with the map
\[
		\bigoplus_{k=1}^n\RH^{c_n}(\mathfrak{g}_{n,\infty},\widetilde{K}_{n,\infty};I_{\eta}^{\RB,(k)}\otimes F^\vee_{\eta})\to\RH^{c_n}(\partial_{\RB_n}\mathcal{X}_n,F_{\eta}^{\vee})
		\]
        induced by \eqref{boundaryZ}. In view of \eqref{boundaryautC}, it suffices to show the $\mathrm{Aut}(\C)$-equivariance of \eqref{318B}. 
		
		Recall from the proof of Proposition \ref{prp-Eisenstein} that the L-function $\RL(s,\eta)$  has no pole and $\RL(0,\eta)\neq 0$. By the constant term formula \eqref{constantterm}, the $k$-th component of the map \eqref{318B} equals
        \[
        (\mathrm{i}^{\frac{[\rk:\BQ]}{2}}\cdot\Delta_{\rk})^{k-n}\frac{\RL(k-n,\eta)}{\RL(0,\eta)}\cdot\CN'(w_k).
        \]
We have proved in Proposition \ref{prop-intertwine} that $\mathcal{N}'(w_k)$ is $\mathrm{Aut}(\C)$-equivariant. Note that $0,-1,\dots,1-n$ are critical places for the Hecke L-function $\RL(s,\eta)$. By a theorem of Harder \cite{Har87} (see also \cite[Theorem 25]{R-Hecke}) we have
		\[
		\sigma\left((\mathrm{i}^{\frac{[\rk:\BQ]}{2}}\cdot\Delta_{\rk})^{k-n}\frac{\RL(k-n,\eta)}{\RL(0,\eta)}\right)=(\mathrm{i}^{\frac{[\rk:\BQ]}{2}}\cdot\Delta_{\rk})^{k-n}\frac{\RL(k-n,{^{\sigma}\eta})}{\RL(0,{^{\sigma}\eta})}.
		\]
        This proves the $\mathrm{Aut}(\C)$-equivariance of \eqref{318B}, which completes the proof of the proposition.
	\end{proof}
	
	We now finish the proof of Theorem \ref{thm:Eis}. For each $\sigma\in\mathrm{Aut}(\C)$, taking the difference between the two compositions around the diagram \eqref{maindiagram} provides a $\GL_n(\A_{\mathrm{f}})$-intertwining map
	\[
	\begin{aligned}
		\Delta_{\sigma}:\RH^{c_n}(\mathfrak{g}_{n,\infty},\widetilde{K}_{n,\infty};I_{\eta}\otimes F^\vee_{\eta})&\to \RH^{c_n}(\mathcal{X}_n,F^\vee_{\eta}),\\
		\kappa&\mapsto \mathrm{Eis}_{\eta}(\kappa)-\sigma^{-1}(\mathrm{Eis}_{^{\sigma}\eta}(\sigma(\kappa))).
	\end{aligned}
	\]
	By \eqref{diagramproof}, the image of $\Delta_{\sigma}$ lies in the kernel of $r_{\mathrm{B}_n}$ and we obtain a $\GL_n(\A_{\mathrm{f}})$-equivariant map
    \[
    \Delta_{\sigma}:I_{\eta,\mathrm{f}}\otimes \RH^{c_n}(\mathfrak{g}_{n,\infty},\widetilde{K}_{n,\infty};I_{\eta,\infty}\otimes F^\vee_{\eta})\to\mathrm{ker}(r_{\RB_n}).
    \]
	The proof of \cite[Theorem II]{Har90} shows that $\mathrm{Hom}_{\GL_n(\A_{\mathrm{f}})}(I_{\eta,\mathrm{f}},\mathrm{ker}(r_{\RB_n}))=\{0\}$. Therefore  $\Delta_{\sigma}=0$ and the diagram \eqref{maindiagram} commutes.

	\section{Cohomological tamely isobaric representations}
	
	The rest of the paper is devoted to a proof of our main theorem (Theorem \ref{mainthm}) on special values of Rankin-Selberg L-functions. We introduce in this section the cohomological tamely isobaric representations, extending the ones considered in \cite{LLS}. See \cite[Section 1.2 and Section 4]{dual} for more details.

\subsection{
Tamely isobaric automorphic representations}

We fix a partition 
    \[
     n=n_1+n_2+\dots+n_r \qquad ( r, n_1,n_2,\dots\, n_r\geq 1)
    \]
    and write 
    \[
     \RM=\GL_{n_1}\times \GL_{n_2}\times \dots \times \GL_{n_r}, 
     \]
    which is viewed as a Levi subgroup of $\GL_n$ as usual. Denote by $\RP$ the block upper-triangular parabolic subgroup of $\GL_n$ with Levi factor $\RM$. 
    
Let $\Sigma$ be an isobaric automorphic representation of $\GL_n(\A)$: 
\be\label{isob0}
\Sigma:=\Sigma_{1}\boxplus  \Sigma_{2}\boxplus  \dots \boxplus \Sigma_{r}, 
\ee
where  $\Sigma_i$ ($1\leq i\leq r$) is an irreducible cuspidal smooth automorphic representation of $\GL_{n_i}(\A)$. We suppose that the exponents satisfy 
		\[
		\mathrm{ep}(\Sigma_1)\geq\mathrm{ep}(\Sigma_2)\geq\dots\geq\mathrm{ep}(\Sigma_r).
		\]
 Here the exponent $\mathrm{ep}(\Sigma_i)$ is the unique real number such that $\Sigma_i\otimes\abs{\det}_{\A}^{-\mathrm{ep}(\Sigma_i)}$ is unitarizable. Note that the cuspidal support of $\Sigma$
 is $[(\RM, \Sigma_{1}\widehat \otimes  \Sigma_{2}\widehat \otimes  \dots\widehat \otimes \Sigma_{r})]$ (the $\GL_n(\rk)$-conjugacy  class of the pair $(\RM, \Sigma_{1}\widehat \otimes  \Sigma_{2}\widehat \otimes  \dots\widehat \otimes \Sigma_{r})$). See \cite[Section 3]{F}, \cite[Section 1]{FS}, and \cite[Section 3.2]{dual} for the notion of cuspidal support.
 
 We have the factorizations
\[
\Sigma=\widehat{\otimes}_v'\Sigma_v=\Sigma_{\mathrm{f}}\otimes\Sigma_{\infty}.
\]
By definition, 
 \[
 \Sigma_v\cong \Sigma_{1,v}\boxplus  \Sigma_{2,v}\boxplus  \dots \boxplus \Sigma_{r,v},
 \]
 namely, the Langlands parameter of $\Sigma_v$ (which is an $n$-dimensional completely reducible representation of the Weil-Deligne group) is isomorphic to the direct sum of those of $\Sigma_{1,v}, \Sigma_{2,v},\dots, \Sigma_{r,v}$.  
Define the induced representation
\[
 \mathrm I(\Sigma):=\mathrm{Ind}^{\GL_n(\A)}_{\RP(\A)}\left(\Sigma_1\,\widehat{\otimes}\,\cdots\,\widehat{\otimes}\,\Sigma_r\right)=\widehat{\otimes}'_v\RI_v(\Sigma)=\RI_{\mathrm{f}}(\Sigma)\otimes\RI_{\infty}(\Sigma),\]
where  \[
  \RI_{\mathrm{f}}(\Sigma):=\otimes_{v\nmid\infty}'\RI_v(\Sigma),\qquad \  \RI_{\infty}(\Sigma):=\widehat{\otimes}_{v|\infty}\RI_v(\Sigma), 
\]
and
\[
\mathrm I_v(\Sigma):=\mathrm{Ind}^{\GL_n(\rk_v)}_{\RP(\rk_v)}\left(\Sigma_{1,v}\,\widehat{\otimes}\cdots\widehat{\otimes}\,\Sigma_{r,v}\right).
\] 

\begin{dfnp}\label{def:isobaric}
	An automorphic representation  of $\GL_n(\A)$ is said to be tamely isobaric if it is an isobaric automorphic representation $\Sigma$ as above such that  $\Sigma_i\neq  \Sigma_j$ and $\Sigma_i\neq\Sigma_j\otimes \abs{\det}_{\A}$ for all distinct $i,j\in\{1,2,\dots,r\}$ with $n_i=n_j$.
	\end{dfnp}

Assume that $\Sigma$ is tamely isobaric. Then the Eisenstein series yields an embedding 
\[
 \RI(\Sigma)\hookrightarrow \mathcal{A}\left(\GL_n(\rk)\backslash\GL_n(\A)\right).
\]
Denote by $\RI^{\mathrm{aut}}(\Sigma)$ the image of this embedding, which is a subrepresentation of $\mathcal{A}\left(\GL_n(\rk)\backslash\GL_n(\A)\right)$ that  depends only on $\Sigma$. See \cite[Section 5.2]{F}, \cite{BL24, Lap08}, and \cite[Section 4.2]{dual}.

\subsection{Whittaker models}

Let $\Sigma'$ be an irreducible cuspidal smooth automorphic  representation of $\GL_n(\A)$. It is obvious that  $\Sigma'$ is tamely isobaric and $\RI(\Sigma')=\Sigma'$. Put \[
    \Pi:=\Sigma\,\widehat{\otimes}\,\Sigma',\qquad  \RI(\Pi):=\RI(\Sigma)\,\widehat{\otimes}\, \RI(\Sigma'), \]
    and
    \[
    \RI^{\mathrm{aut}}(\Pi):=\RI^{\mathrm{aut}}(\Sigma)\,\widehat \otimes\, \RI^{\mathrm{aut}}(\Sigma')\subset \CA\left(\left(\GL_n(\rk)\backslash\GL_n(\A)\right)\times \left(\GL_n(\rk)\backslash\GL_n(\A)\right)\right). 
    \]

Recall the additive character $\psi:\rk\backslash\A\to\C^{\times}$ defined in \eqref{additivechar}. This further induces an automorphic character
	\begin{equation}\label{psin}
		\begin{aligned}
			\psi_n:\RN_n(\A)\times\RN_n(\A)&\to\C^{\times},\\
			\left([x_{i,j}]_{1\leq i,j\leq n},[x'_{i,j}]_{1\leq i,j\leq n}\right)&\mapsto\psi\left(\sum_{i=1}^{n-1}(x_{i,i+1}-x_{i,i+1}')\right).
		\end{aligned}
	\end{equation}
	Define the global Whittaker functional
	\begin{equation}
		\lambda_{\Pi}\in\mathrm{Hom}_{\RN_n(\A)\times\RN_n(\A)}\left(\RI^{\rm aut}(\Pi),\psi_n\right)
	\end{equation}
    by 
	\[
	\la \lambda_{\Pi}, \phi\ra:=\int_{\RN_n(\rk)\backslash\RN_n(\A)\times\RN_n(\rk)\backslash\RN_n(\A)}\phi(u)\overline{\psi_n}(u)\mathrm{d}u,\qquad \phi\in \RI^{\rm aut}(\Pi).
	\]
     This is nonzero by  \cite[Chapter 7]{Sh10} (see also \cite[Proposition 4.2]{dual}). 

Let $v$ be a place of $\rk$. Recall that by \cite[Theorem 4.1]{dual}, $\RI_v(\Pi)$ is isomorphic to the standard module whose Langlands quotient is isomorphic to $\Pi_v$.  Hence by \cite[Lemma 2.5]{J09} and \cite{JS83}, there is a unique subrepresentation 
\begin{equation}\label{whittakermodel}
\RI_v^{\rm whi}(\Pi)\subset\mathrm{Ind}^{\GL_n(\rk_v)\times\GL_n(\rk_v)}_{\RN_n(\rk_v)\times\RN_n(\rk_v)}\psi_{n,v}\qquad(\text{the Whittaker model})
\end{equation}
that is isomorphic to $\RI_v(\Pi)$. We put
\[
\RI^{\rm whi}(\Pi):=\widehat \otimes_v'\RI_v^{\rm whi}(\Pi),\qquad \RI_{\rm f}^{\rm whi}(\Pi):=\otimes_{v\nmid\infty}'\RI_v^{\rm whi}(\Pi),\qquad\RI^{\rm whi}_{\infty}(\Pi):=\widehat\otimes_{v|\infty}\RI_v^{\rm whi}(\Pi).
\]
Then $\lambda_{\Pi}$ induces a $\GL_n(\A)\times\GL_n(\A)$-equivariant isomorphism
\begin{equation}\label{lambdapi}
\Lambda_{\Pi}:\RI^{\rm aut}(\Pi)\to\RI^{\rm whi}(\Pi).
\end{equation}

\subsection{Cohomology spaces}
Denote by $\CR^{\rm coh}$ the set of all isomorphism classes of irreducible automorphic representations of $\GL_n(\A)\times \GL_n(\A)$ of the form 
\[
\Pi=\Sigma \,\widehat \otimes\, \Sigma',
\]
where
\begin{itemize}
    \item  $\Sigma$ is a cohomological tamely isobaric automorphic representation of $\GL_n(\A)$ with generic infinite part; and 
    \item $\Sigma'$ is a cohomological irreducible cuspidal smooth  automorphic representation of $\GL_n(\A)$.
\end{itemize}
Note that an isobaric automorphic representation $\Sigma$ has generic infinite part if and only if $\RI_{\infty}(\Sigma)$ is irreducible (see \cite{BZ77, JS83, Vo78, Z80}).

Let $\Pi=\Sigma\,\widehat \otimes\, \Sigma'\in \CR^{\rm coh}$.  Then there exist dominant weights $\mu$ and $\nu$ such that $F_{\mu}\otimes F_{\nu}$ (the coefficient system) has the same infinitesimal character as $\Pi_{\infty}$. We consider the relative Lie algebra cohomology spaces
	\begin{equation}\label{HPi}
		\begin{aligned}
			\CH(\Pi)&:=\RH^{2b_n}\left(\mathfrak{g}_{n,\infty}\times\mathfrak{g}_{n,\infty},\widetilde{K}_{n,\infty}\times\widetilde{K}_{n,\infty};(F_{\mu}^{\vee}\otimes F_{\nu}^{\vee})\otimes\RI^{\rm aut}(\Pi)\right),\\
			\CH(\Pi_{\infty})&:=\RH^{2b_n}\left(\mathfrak{g}_{n,\infty}\times\mathfrak{g}_{n,\infty},\widetilde{K}_{n,\infty}\times\widetilde{K}_{n,\infty};(F_{\mu}^{\vee}\otimes F_{\nu}^{\vee})\otimes\RI^{\rm whi}(\Pi_{\infty})\right),
		\end{aligned}
	\end{equation}
    where
    \[
	b_n:=\frac{[\rk:\BQ]}{2}\cdot\frac{ n(n-1)}{2}.
	\]
    
The inverse of \eqref{lambdapi} defines a $\GL_n(\A)\times\GL_n(\A)$-equivariant isomorphism
\[
\RI_{\rm f}^{\rm whi}(\Pi)\otimes\RI_{\infty}^{\rm whi}(\Pi)=\RI^{\rm whi}(\Pi)\xrightarrow{\Lambda_{\Pi}^{-1}}\RI^{\rm aut}(\Pi)
\]
which induces a canonical isomorphism
	\begin{equation}\label{canpi}
		\imath_{\mathrm{can}}:\CH(\Pi_{\infty})\otimes\RI^{\rm whi}_{\mathrm{f}}(\Pi)\xrightarrow{\sim}\CH(\Pi).
	\end{equation}
	As a consequence of the Künneth formula and Delorme's lemma \cite[Theorem III.3.3]{BW} (see also \cite[Section 1.1]{local}), we have that 
    \[
    \dim\CH(\Pi_{\infty})=1.
	\]
	By \cite[Lemma 3.15]{Clo} and \cite[Satz 4.11]{Sch83} (see also \cite[Proposition 1.6]{G} and \cite[Lemma 4.11]{dual}), we have a canonical $\GL_n(\A_{\mathrm{f}})\times\GL_n(\A_{\rm f})$-equivariant embedding
	\begin{equation}\label{iotapi}
		\imath_{\Pi}:\CH(\Pi)\hookrightarrow\RH^{b_n}(\CX_n,F_{\mu}^{\vee})\otimes\RH_{\mathrm{c}}^{b_n}(\CX_n,F_{\nu}^{\vee}).
	\end{equation}
    Here $\RH_{\mathrm{c}}^{b_n}$ indicates the compactly supported cohomology (of degree $b_n$).	As in  \eqref{sheafsigma},  for every $\sigma\in\mathrm{Aut}(\C)$, we have the $\sigma$-linear isomorphism
\begin{equation}\label{sigmasheaf}
		\sigma:\RH^{b_n}(\CX_n,F_{\mu}^{\vee})\otimes\RH_{\mathrm{c}}^{b_n}(\CX_n,F_{\nu}^{\vee})\to\RH^{b_n}(\CX_n,F_{{^{\sigma}\mu}}^{\vee})\otimes\RH_{\mathrm{c}}^{b_n}(\CX_n,F_{{^{\sigma}\nu}}^{\vee}).
	\end{equation}

	\subsection{$\Aut(\C)$-twists} \label{sec:sigmapi}

    Let $\sigma\in\mathrm{Aut}(\C)$. For all $v\nmid \infty$,  we have a $\sigma$-linear isomorphism
	\be\label{sigmawhi}
	\sigma:\mathrm{Ind}^{\GL_n(\rk_v)\times\GL_n(\rk_v)}_{\RN_n(\rk_v)\times\RN_n(\rk_v)}\psi_{n,v}\to\mathrm{Ind}^{\GL_n(\rk_v)\times\GL_n(\rk_v)}_{\RN_n(\rk_v)\times\RN_n(\rk_v)}\psi_{n,v}
	\ee
	defined as in \cite[p.594]{Ma05} (see also \cite[Section 2.7]{dual}). 
    
    By \cite[Lemma 4.9]{dual} and \cite[Proposition 4.10]{dual},  there exists a unique  representation $\,^\sigma \Pi\in \CR^{\rm coh}$ such that 
    \[
    \RI_v^{\rm whi}(\,^\sigma \Pi)=\sigma(\RI_v^{\rm whi}(\Pi))\quad \textrm{ for all $v\nmid \infty$.}
 \]
The map \eqref{sigmawhi} for various $v\nmid \infty$ induces a $\sigma$-linear isomorphism
    \begin{equation}\label{sigmapi}
		\sigma:\RI^{\rm whi}_{\mathrm{f}}(\Pi)\to \RI^{\rm whi}_{\mathrm{f}}(^{\sigma}\Pi).
	\end{equation}

Note that $^\sigma\Pi$  has coefficient system $F_{^\sigma\mu}\otimes F_{^\sigma\nu}$. The rationality field of $\Pi$, denoted by $\BQ(\Pi)$, is by definition the fixed field of the group of field automorphisms $\sigma\in\mathrm{Aut}(\C)$ such that ${}^\sigma\Pi \cong \Pi$. We remark that $\BQ(\Pi)$ is a number field.

	\section{Modular symbols and special values of L-functions}

    We now study the special values of L-functions via modular symbols and prove the main theorem. We retain the notation of the Introduction. In particular, $\Pi = \Sigma \,\widehat{\otimes}\, \Sigma' \in \CR^{\rm coh}$ is an irreducible smooth automorphic representation of $\GL_n(\A) \times \GL_n(\A)$, cohomological with coefficient system $F_{\mu} \otimes F_{\nu}$, where $\mu$ and $\nu$ are the dominant weights in \eqref{writemu} and \eqref{writenu}, respectively. Moreover, $\Sigma$ is tamely isobaric with generic infinite part, and $\Sigma'$ is cuspidal. Let $\chi \in \RB(\mu,\nu)$, and let $\eta$ be as in \eqref{eta}. Throughout Sections~\ref{sec7.1}--\ref{sec7.5}, we assume $\chi \in \RB(\mu,\nu)^+$ and first prove Theorem~\ref{mainthm} under this hypothesis. The remaining case will then be treated in Section~\ref{sec7.6} via the functional equation.

	\subsection{Rankin-Selberg integrals}\label{sec7.1}
	
As usual, write $\mathrm{PGL}_n:=\RZ_n\backslash \GL_n$ for the projective general linear group. Then $\RN_n$ is obviously identified as an algebraic subgroup of $\mathrm{PGL}_n$. 
For each place $v$ of $\rk$, let $\mathfrak{M}_{n,v}$ denote the one-dimensional space of left invariant Borel measures on $\mathrm{PGL}_n(\rk_v)$. When $v$ is non-archimedean, it has a distinguished element with respect to which all the maximal open compact subgroups have total volume $1$.
Denote by $\mathfrak{M}_n$ the one-dimensional space of left-invariant measures on $\mathrm{PGL}_n(\A)$. Then we have the usual decomposition 
	\[
	\mathfrak{M}_n=\otimes_v'\mathfrak{M}_{n,v}=\mathfrak{M}_{n,\infty}\otimes\mathfrak{M}_{n,\mathrm{f}},
	\]
	where the restricted tensor product is defined with respect to those distinguished elements. 

     Equip $\mathrm{PGL}_n(\rk)$ with the counting measure, and recall the measures on $\RN_n(\A)$ and $\RN_n(\rk_v)$ fixed in Section \ref{sec2.2}. With respect to these measures, for each $\mathrm{d}g\in\mathfrak{M}_n$ or $\mathfrak{M}_{n,v}$, the quotient measures on $\RN_n(\A)\backslash\mathrm{PGL}_n(\A)$, $\mathrm{PGL}_n(\rk)\backslash\mathrm{PGL}_n(\A)$, or $\RN_n(\rk_v)\backslash\mathrm{PGL}_n(\rk_v)$ are all denoted by $\overline{\mathrm{d}}g$.

We view $\GL_n$ as an algebraic subgroup of $\GL_n\times\GL_n$ via the diagonal embedding. The global Rankin-Selberg integral is defined by
	\begin{equation}\label{globalRS}
		\RZ(s,\phi,\varphi;\chi,\mathrm{d}g):=\int_{\mathrm{PGL}_n(\rk)\backslash\mathrm{PGL}_n(\A)}\phi(g)\RE(g;\varphi_s,\eta)\chi(\det g)\abs{\det g}^s_{\A}\overline{\mathrm{d}}g,
	\end{equation}
 where $\phi\in \RI^{\rm aut}(\Pi)$, $\varphi\in I_{\eta}$, and $\mathrm{d}g\in\mathfrak{M}_n$. Here we recall that $\varphi_s\in I_{\eta,s}$ is defined as in \eqref{varphis}. When $\mathrm{Re}(s)$ is sufficiently large so that the Eisenstein series $\RE(g;\varphi_s,\eta)$ absolutely converges, we unfold the Eisenstein series in \eqref{globalRS} to obtain
    \[
\RZ(s,\phi,\varphi;\chi,\mathrm{d}g)=\int_{\RN_n(\A)\backslash\mathrm{PGL}_n(\A)}\left\langle \lambda_{\Pi},g.\phi\right\rangle\varphi_s(g)\chi(\det g)\abs{\det g}_{\A}^s\overline{\mathrm{d}}g.
	\] 
	Therefore, for decomposable data
    \[
    \Lambda_\Pi(\phi) = \otimes_v\phi_v, \quad \phi_v\in\RI^{\rm whi}_v(\Pi),
    \]
    and
    \[
        \varphi_v = \otimes_v \varphi_v, \quad \varphi_v \in I_{\eta, v},
    \]
    the global Rankin-Selberg integral admits an Euler product factorization into the local Rankin-Selberg integrals
	\begin{equation}\label{localRS}
		\RZ_v(s,\phi_v,\varphi_v;\chi_v,\mathrm{d}g):=\int_{\RN_n(\rk_v)\backslash\mathrm{PGL}_n(\rk_v)}\phi_v(g)\varphi_{s,v}(g)\chi_v(\det g)\abs{\det g}^s_v\overline{\mathrm{d}}g, 
	\end{equation}
    where $\mathrm{d}g\in\mathfrak{M}_{n,v}$. We further define the normalized local Rankin-Selberg integrals
	\begin{equation}\label{normalizedRS}
		\RZ^{\circ}_v(s,\phi_v,\varphi_v;\chi_v,\mathrm{d}g):=\frac{\RL(ns,\omega_{\Pi_v}\chi_v^n)}{\RL(s,\Pi_v\times\chi_v)}\cdot\RZ_v(s,\phi_v,\varphi_v;\chi_v,\mathrm{d}g).
	\end{equation}

By \cite{JS1, JPSS83, Cog} and \cite[Theorem 4.1]{dual}, the normalized local Rankin-Selberg integral \eqref{normalizedRS} is holomorphic at $s=0$, and evaluating $\RZ_v^{\circ}$ at $s=0$ yields a nonzero element 	        \begin{equation}\label{functionalRS}
\RZ^{\circ}_{\Pi_v,\chi_v}\in\mathrm{Hom}_{\GL_n(\rk_v)}\left(\RI^{\rm whi}_v(\Pi)\otimes I_{\eta,v}\otimes \chi_v\otimes\mathfrak{M}_{n,v},\C\right).
\end{equation}
Denote 
\[
\begin{aligned}
\RZ^{\circ}_{\Pi_{\infty},\chi_{\infty}}&:=\bigotimes_{v\mid\infty}\RZ^{\circ}_{\Pi_v,\chi_v}\in\mathrm{Hom}_{\GL_n(\rk_\infty)}\left(\RI^{\rm whi}_\infty(\Pi)\otimes I_{\eta,\infty}\otimes \chi_\infty\otimes\mathfrak{M}_{n,\infty},\C\right),\\
\RZ^{\circ}_{\Pi_{\mathrm{f}},\chi_{\mathrm{f}}}&:=\bigotimes_{v\nmid\infty}\RZ^{\circ}_{\Pi_v,\chi_v}\in\mathrm{Hom}_{\GL_n(\A_{\rm f})}\left(\RI^{\rm whi}_{\rm f}(\Pi)\otimes I_{\eta,\rm f}\otimes \chi_{\rm f}\otimes\mathfrak{M}_{n,\rm f},\C\right).
\end{aligned}
\]

	\subsection{Definition of modular symbols}

    Denote by $\mathfrak{O}_{n,\infty}$ the one-dimensional space of $\GL_n(\rk_{\infty})$-invariant sections of the orientation line bundle of $\GL_n(\rk_{\infty})/\widetilde{K}_{n,\infty}$ with complex coefficients. Set $\mathfrak{M}_n^{\natural}:=\mathfrak{M}_{n,\mathrm{f}}\otimes\mathfrak{O}_{n,\infty}$. 
    By using the push-forward of measures through the natural proper map 
    \[
    \mathrm{PGL}_n(\rk_{\infty})\to\GL_n(\rk_{\infty})/\widetilde K_{n,\infty},
    \]
    we identify $\mathfrak{M}_{n,\infty}$ with the space of $\GL_n(\rk_{\infty})$-invariant Borel measures on $\GL_n(\rk_{\infty})/\widetilde{K}_{n,\infty}$, which is further identified with the space 
    \[
\wedge^{d_n}\left(\mathfrak{g}_{n,\infty}/\widetilde{\mathfrak{k}}_{n,\infty}\right)^{\ast} \otimes\mathfrak{O}_{n,\infty}.
    \]
    See \cite[Section 3.1]{LLS} for more details. Here we use the superscript `$^\ast$' to indicate the dual vector space and
    \[
    d_n:=\dim \left(\GL_n(\rk_{\infty})/\widetilde{K}_{n,\infty}\right)=\frac{[\rk:\BQ]}{2}\cdot(n^2-1).
    \]

	Recall the cohomology spaces $\CH(\Pi)$ and $\CH(\Pi_{\infty})$ defined in \eqref{HPi}. We similarly write for short
	\begin{equation}
		\label{HI}
		\begin{aligned}
			\CH(I_{\eta})&:=\RH^{c_n}\left(\mathfrak{g}_{n,\infty},\widetilde{K}_{n,\infty};F_{\eta}^{\vee}\otimes I_{\eta}\right),\\
			\CH(I_{\eta_{\infty}})&:=\RH^{c_n}\left(\mathfrak{g}_{n,\infty},\widetilde{K}_{n,\infty};F_{\eta}^{\vee}\otimes I_{\eta_{\infty}}\right).
		\end{aligned}
	\end{equation}
	    Denote  
	\[
	\CH(\chi_{\infty}):=\RH^0(\mathfrak{g}_{n,\infty},\widetilde{K}_{n,\infty};F_{\chi}^{\vee}\otimes\chi_{\infty}),
	\]
	which is canonically identified with $\C$, and denote
	\[
\CH(\chi):=\RH^0(\mathfrak{g}_{n,\infty},\widetilde{K}_{n,\infty};F_{\chi}^{\vee}\otimes\chi)\cong\chi_{\mathrm{f}}.
	\]
	We further denote 
	\[
	\begin{aligned}
		\CH(\Pi,\chi)_{\mathrm{glob}}&:=\CH(\Pi)\otimes\CH(I_{\eta})\otimes\CH(\chi)\otimes\mathfrak{M}_n^{\natural},\\
		\CH(\Pi_{\infty},\chi_{\infty})&:=\CH(\Pi_{\infty})\otimes\CH(I_{\eta_{\infty}})\otimes\CH(\chi_{\infty})\otimes\mathfrak{O}_{n,\infty},\\
		\CH(\Pi_{\mathrm{f}},\chi_{\mathrm{f}})&:=\RI^{\rm whi}_{\rm f}(\Pi)\otimes I_{\eta,{\mathrm{f}}}\otimes\chi_{\mathrm{f}}\otimes\mathfrak{M}_{n,\mathrm{f}},\\
		\CH(\Pi,\chi)_{\mathrm{loc}}&:=\CH(\Pi_{\infty},\chi_{\infty})\otimes\CH(\Pi_{\mathrm{f}},\chi_{\mathrm{f}}),
	\end{aligned}
	\]
	and the isomorphism \eqref{canpi} induces  the canonical isomorphism
	\begin{equation}
		\imath_{\mathrm{can}}:\CH(\Pi,\chi)_{\mathrm{loc}}\xrightarrow{\sim}\CH(\Pi,\chi)_{\mathrm{glob}}.
	\end{equation}
	
    We define an invariant linear functional
	\begin{equation}
		\begin{aligned}
			\phi_{\mu,\nu,\chi}&\in\mathrm{Hom}_{\GL_n(\rk\otimes_{\BQ}\C)}\left(F_{\mu}^{\vee}\otimes F_{\nu}^{\vee}\otimes F_{\eta}^{\vee}\otimes F_{\chi}^{\vee},\C\right)\\
			&=\left(F_{\mu}\otimes F_{\nu}\otimes F_{\eta}\otimes F_{\chi}\right)^{\GL_n(\rk\otimes_{\BQ}\C)}
		\end{aligned}
	\end{equation}
	as in \cite[Section 1.3]{local}.

	The  pairing with the fundamental class yields a linear map (see \cite[Section 5.1]{Ma05} for more explanation)
	\[
	\int_{\CX_n}:\RH_{\mathrm{c}}^{d_n}(\CX_n,\C)\otimes\mathfrak{M}_n^{\natural}\to\C.
	\]
	Note that
	\[
	d_n=2b_n+c_n.
	\]
	We define the global modular symbol
	\begin{equation}\label{globalmodularsymbol}
		\CP:=\CP_{\Pi,\chi}:\CH(\Pi,\chi)_{\mathrm{glob}}\to\C
	\end{equation}
	to be the composition of
	\[
	\begin{aligned}
		\CP_{\Pi,\chi}:\ &\CH(\Pi,\chi)_{\mathrm{glob}}\\
		\xrightarrow{\imath_{\Pi}\otimes\mathrm{Eis}_{\eta}\otimes \imath_\chi\otimes \rm id}\ &\RH^{b_n}(\CX_n,F_{\mu}^{\vee})\otimes\RH_{\mathrm{c}}^{b_n}(\CX_n,F_{\nu}^{\vee})\otimes\RH^{c_n}(\CX_n,F_{\eta}^{\vee})\otimes\RH^0(\CX_n,F_{\chi}^{\vee})\otimes\mathfrak{M}_n^{\natural}\\
		\xrightarrow{\phi_{\mu,\nu,\chi}}\ &\RH_{\mathrm{c}}^{d_n}(\CX_n,\C)\otimes\mathfrak{M}_n^{\natural}\xrightarrow{\int_{\CX_n}}\C.
	\end{aligned}
	\]
	Here the first arrow is the tensor product of the map $\imath_\Pi$ given in \eqref{iotapi}, the map $\mathrm{Eis}_{\eta}$ given in \eqref{Eis}, the natural map 
    \[
    \imath_\chi: \CH(\chi)\to\RH^0(\CX_n,F_{\chi}^{\vee}),
    \]
    and the identity map $\rm id:\mathfrak{M}_n^{\natural}\to\mathfrak{M}_n^{\natural}$.
	On the other hand, we define the archimedean modular symbol
	\begin{equation}\label{archimedeanmodularsymbol}
		\CP_{\infty}:=\CP_{\Pi_{\infty},\chi_{\infty}}:\CH(\Pi_{\infty},\chi_{\infty})\to\C
	\end{equation}
	to be the composition of
	\[
	\begin{aligned}
		\CP_{\Pi_{\infty},\chi_{\infty}}:\ &\CH(\Pi_{\infty},\chi_{\infty})\\
		\xrightarrow{\text{res}}&\,\RH^{d_n}\left(\mathfrak{g}_{n,\infty},\widetilde{K}_{n,\infty};F_{\mu}^{\vee}\otimes F_{\nu}^{\vee}\otimes F_{\eta}^{\vee}\otimes F_{\chi}^{\vee}\otimes\RI^{\rm whi}(\Pi_{\infty})\otimes I_{\eta_{\infty}}\otimes\chi_{\infty}\right)\otimes\mathfrak{O}_{n,\infty}\\
		\xrightarrow{\phi_{\mu,\nu,\chi}\otimes\RZ^{\circ}_{\Pi_{\infty},\chi_{\infty}}}\,&\RH^{d_n}(\mathfrak{g}_{n,\infty},\widetilde{K}_{n,\infty};\mathfrak{M}_{n,\infty}^{\ast})\otimes\mathfrak{O}_{n,\infty}=\C,
	\end{aligned}
	\]
	where the first arrow is induced by the restriction of cohomology. See \cite[Section 3.1]{LLS} and \cite[Section 1.5]{local} for more details.  We also have the non-archimedean normalized Rankin-Selberg period
	\begin{equation}\label{nonarchimedeanmodularsymbol}
		\CP_{\mathrm{f}}:=\RZ^{\circ}_{\Pi_{\mathrm{f}},\chi_{\mathrm{f}}}:\CH(\Pi_{\mathrm{f}},\chi_{\mathrm{f}})\to\C.
	\end{equation}
	
	Note that $\RL(0,\omega_{\Pi}{\chi}^n)\in \BC^\times$ since $\chi \in \RB(\mu,\nu)^+$. The global modular symbol is related to the archimedean modular symbol and the non-archimedean Rankin-Selberg period  by the following proposition. 
	
	\begin{prp}
        The diagram
		\begin{equation}\label{frontback}
			\begin{CD}
				\CH(\Pi,\chi)_{\mathrm{loc}} @>\CP_{\infty}\otimes\CP_{\mathrm{f}}>>\C\\
				@V\imath_{\mathrm{can}}VV @VV\CL(\Pi,\chi)V\\
				\CH(\Pi,\chi)_{\mathrm{glob}}@>\CP>>\C
			\end{CD}
		\end{equation}
		commutes, where the right vertical arrow is the multiplication by 
		\[
		\CL(\Pi,\chi):=\frac{\RL(0,\Pi\times\chi)}{\RL(0,\omega_{\Pi}\chi^n)}.
      		\]
               \end{prp}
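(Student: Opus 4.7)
The plan is to unfold both sides of the diagram and reduce the identity to the Euler product factorization of the global Rankin-Selberg integral at $s=0$, exactly paralleling the argument of \cite[Proposition 7.2]{LLS}. Starting from a pure tensor in $\CH(\Pi,\chi)_{\mathrm{loc}}$, push it through $\iota_{\mathrm{can}}$ and then through the global modular symbol $\CP$. By the definition of $\CP$, one first applies $\iota_\Pi\otimes\mathrm{Eis}_{\eta,\chi}$ to land in cohomology on $\CX_n$, then contracts the coefficient systems by the balanced map $\phi_{\mu,\nu,\chi}$ to produce a compactly supported top-degree class with values in the line of invariant measures, and finally integrates over $\CX_n$. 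The key observation is that pairing with the fundamental class of $\CX_n$ followed by the contraction $\phi_{\mu,\nu,\chi}$ realizes concretely the adelic integral $\int_{\RZ_n(\A)\GL_n(\rk)\backslash\GL_n(\A)} \phi(g)\,\RE(g;\varphi_s)\,\overline{\mathrm{d}}g$ at $s=0$, which is precisely $\RZ(0,\phi,\varphi;\chi,\mathrm{d}g)$ as defined in \eqref{globalRS}.

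Next, I would invoke the unfolding (valid a priori for $\mathrm{Re}(s)$ large) together with meromorphic continuation to express the global Rankin-Selberg integral at $s=0$ as the Euler product of the local Rankin-Selberg integrals $\RZ_v(0,\phi_v,\varphi_v;\chi_v,\mathrm{d}g)$. Multiplying and dividing by the archimedean and non-archimedean $L$-factors and collecting yields
\[
\RZ(0,\phi,\varphi;\chi,\mathrm{d}g)=\frac{\RL(0,\Pi\times\chi)}{\RL(0,\omega_\Pi\chi^n)}\cdot\prod_v \RZ_v^{\circ}(0,\phi_v,\varphi_v;\chi_v,\mathrm{d}g),
\]
where the normalized factors are holomorphic at $s=0$ by \eqref{functionalRS}. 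The right hand side equals $\CL(\Pi,\chi)\cdot (\RZ^{\circ}_{\Pi_\infty,\chi_\infty}\otimes\RZ^{\circ}_{\Pi_{\mathrm{f}},\chi_{\mathrm{f}}})$ applied to the chosen data.

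Finally, I would identify the non-archimedean half of the product with $\CP_{\mathrm{f}}=\RZ^{\circ}_{\Pi_{\mathrm{f}},\chi_{\mathrm{f}}}$, essentially by definition \eqref{nonarchimedeanmodularsymbol}, and the archimedean half with $\CP_\infty$. For the archimedean identification one uses the definition \eqref{archimedeanmodularsymbol}: restricting cohomology from $\CX_n$ to the archimedean fiber $\RZ_n(\rk_\infty)\backslash\GL_n(\rk_\infty)$, contracting via $\phi_{\mu,\nu,\chi}\otimes\RZ^\circ_{\Pi_\infty,\chi_\infty}$, and pairing with the orientation class reproduces the archimedean factor of the local Rankin-Selberg integral. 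The compatibility of the canonical isomorphisms \eqref{canpi} and \eqref{canI} with the restriction-of-cohomology map is what makes the diagram commute.

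The main obstacle is the careful bookkeeping at the interface between cohomology and analysis: one must justify that the cohomological pairing (fundamental class paired against $\phi_{\mu,\nu,\chi}$ applied to cohomological cocycles) really equals the automorphic integral defining $\RZ$, and that meromorphic continuation plus the holomorphy of $\RE(g;\varphi_s)$ at $s=0$ (provided by \cite[Proposition 2.1]{eisenstein}) together with holomorphy of $\RZ_v^\circ$ at $s=0$ allow evaluation at $s=0$ without spurious poles. This is precisely what is handled in \cite[Proposition 7.2]{LLS} and transfers verbatim here once one has the ingredients for tamely isobaric $\Sigma$ in the generalized sense of Definition \ref{def:isobaric}, which is why the author defers to that reference.
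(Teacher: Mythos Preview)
Your proposal is correct and follows exactly the approach the paper indicates: the paper states that the proposition ``is essentially due to the Euler product factorization of the Rankin-Selberg integrals'' and defers to \cite[Proposition 7.2]{LLS}, which is precisely the reduction you outline. Your additional remarks on holomorphy at $s=0$ and the bookkeeping between cohomological pairings and automorphic integrals are accurate elaborations of what that reference covers.
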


\begin{proof}
This is well known and the proof is the same as that of \cite[Proposition 7.2]{LLS}. 
\end{proof}

	\subsection{Non-archimedean period relations}

	We say that an invariant measure in $\mathfrak{M}_{n,\mathrm{f}}$  is rational if every open compact subgroup of $\mathrm{PGL}_n(\mathbb{A}_{\mathrm{f}})$ has rational volume with respect to it. All these rational measures  form a rational structure of $\mathfrak{M}_{n,\mathrm{f}}$. With respect to it, we have the $\sigma$-linear isomorphism
	\begin{equation}\label{sigmaM}
		\sigma:\mathfrak{M}_{n,\mathrm{f}}\to\mathfrak{M}_{n,\mathrm{f}}
	\end{equation}
    for   every $\sigma\in\mathrm{Aut}(\C)$. From  \eqref{sigmaIf}, \eqref{sigmapi} and \eqref{sigmaM}, we obtain a $\sigma$-linear isomorphism
	\begin{equation}
		\sigma:\CH(\Pi_{\mathrm{f}},\chi_{\mathrm{f}})\to\CH({^{\sigma}\Pi}_{\mathrm{f}},{^{\sigma}\chi_{\mathrm{f}}}).
	\end{equation}
	
	For each non-archimedean place $v$, denote by $\mathfrak{c}(\chi_v)$ and $\mathfrak{c}(\psi_v)$ the conductors of $\chi_v$ and $\psi_v$ respectively, which are fractional ideals of $\rk_v$. Fix $y_v\in\rk_v^{\times}$ such that $\mathfrak{c}(\psi_v)=y_v\cdot\mathfrak{c}(\chi_v)$. Define the local Gauss sum by
	\begin{equation}\label{localgauss}
		\CG(\chi_v):=\CG(\chi_v,\psi_v):=\int_{\CO_v^{\times}}\chi_v(y_vx)^{-1}\cdot\psi_v(y_v x)\mathrm{d}x,
	\end{equation}
	where $\mathrm{d}x$ is the normalized Haar measure so that $\CO_{v}^{\times}$ has volume $1$. We note that the above definition is independent of the choice of $y_v$. Define the Gauss sum of $\chi$ by
	\begin{equation}\label{globalgauss}
		\CG(\chi):=\prod_{v\nmid\infty}\CG(\chi_v).
	\end{equation}
	This is a well-defined nonzero complex number, and the definition obviously applies to all characters of $\rk^\times \backslash \A^\times$. 
	
	Let $\sigma\in\mathrm{Aut}(\C)$. Define a  linear functional
	\[
	{^{\sigma}\CP}_{\mathrm{f}}^{\circ}:=\CG({^{\sigma}\chi})^{\frac{n(n-1)}{2}}\cdot\RZ^{\circ}_{{^{\sigma}\Pi_{\mathrm{f}}},{^{\sigma}\chi_{\mathrm{f}}}}:\CH({^{\sigma}\Pi_{\mathrm{f}}},{^{\sigma}\chi_{\mathrm{f}}})\to\C.
	\]
	When $\sigma$ is the identity map, we simply write $\CP_{\mathrm{f}}^{\circ}$ for ${^{\sigma}\CP}_{\mathrm{f}}^{\circ}$. The non-archimedean period relation is the following commutative diagram
	\begin{equation}
		\label{nonarchimedeanperioerelation}
		\begin{CD}
			\CH(\Pi_{\mathrm{f}},\chi_{\mathrm{f}}) @>\CP_{\mathrm{f}}^{\circ}>>\C\\
			@V\sigma VV @VV\sigma V\\
			\CH({^{\sigma}\Pi_{\mathrm{f}}},{^{\sigma}\chi_{\mathrm{f}}}) @>{^{\sigma}\CP}_{\mathrm{f}}^{\circ}>>\C.
		\end{CD}
	\end{equation}
This is essentially due to Raghuram and Shahidi (\cite{RS08}). In view of \cite[Theorem 4.1]{dual}, it can be obtained by the same proof as in \cite[Propositions 5.1  and 7.4]{LLS}, 
following the ideas of \cite[Section III]{Har83} (for $n=2$), \cite[Section 3.4]{Ma05}, and \cite[Section 3.3]{RS08}.
    

	\subsection{Archimedean period relations}
	
	Note that $\mathfrak{O}_{n,\infty}$ has a natural $\BQ$-rational structure and we obtain a $\sigma$-linear isomorphism
	\begin{equation}\label{sigmao}
		\sigma:\mathfrak{O}_{n,\infty}\to\mathfrak{O}_{n,\infty}.
	\end{equation}
	We fix an orientation $\mathbf{o}_n$ of $\GL_n(\rk_{\infty})/\widetilde{K}_{n,\infty}$ which is a $\BQ$-rational generator of the one-dimensional space $\mathfrak{O}_{n,\infty}$.

	Recall that $\dim\CH(\Pi_{\infty})=1$. We are going to fix a generator $\kappa_{\mu,\nu}$ of this one-dimensional space. We first consider the case when $\mu=\nu=0_n$, where $0_n$ is the zero element of the abelian group $(\Z^n)^{\CE_{\rk}}$. Recall the character $\eta^\circ = \otimes_{v\mid\infty} \eta_v^\circ$ of $\rk_\infty^\times$ in Section \ref{sec5.2}.
    Then $\eta^\circ = (\chi^\circ)^n$ with 
    \[
 \chi^\circ:=\prod_{v|\infty}\iota_v': \rk_{\infty}^{\times}\to\C^{\times}.
    \]

    Let $\pi_n$ be the irreducible subrepresentation of $\mathrm{Ind}^{\GL_n(\rk_\infty)\times\GL_n(\rk_\infty)}_{\RN_n(\rk_\infty)\times\RN_n(\rk_\infty)}\psi_{n,\infty}$ that has the same infinitesimal character as the trivial representation.  
    By using the non-vanishing hypothesis of the archimedean modular symbol (\cite[Theorem 1.6 (b)]{local}), we take 
    \[
    \kappa_{0_n,0_n}\in \CH(\pi_n):=\RH^{2b_n}\left(\mathfrak{g}_{n,\infty}\times\mathfrak{g}_{n,\infty},\widetilde{K}_{n,\infty}\times\widetilde{K}_{n,\infty};\pi_n\right)
    \]
    such that
	\begin{equation}\label{P1}
	\CP_{\pi_n, \chi^\circ}\left(\kappa_{0_n,0_n}\otimes\kappa_{\eta^\circ}\otimes 1\otimes\mathbf{o}_n\right)=1, 
	\end{equation}
	where $\CP_{\pi_n, \chi^\circ}:\CH(\pi_n)\otimes\CH(I_{\eta^\circ})\otimes\CH(\chi^{\circ})\otimes\mathfrak{O}_{n,\infty}\to\C$
    is defined as in \eqref{archimedeanmodularsymbol}.

      \begin{lemp} \label{lem:wd}
The generator $\kappa_{0_n,0_n}\in\CH(\pi_n)$ is independent of the character $\chi^\circ$.
\end{lemp}

\begin{proof}
The element $\kappa_{0_n,0_n}\in\CH(\pi_n)$ satisfying \eqref{P1} can be constructed explicitly by \cite[Section 3.2 and Theorem 1.2]{JT26}. Let
\[
\chi':=\prod_{v|\infty}\iota_v'':\rk_{\infty}^{\times}\to\C^{\times}
\]
be an arbitrary character with $\iota_v''\in\CE_{\rk_v}$ for each archimedean place $v$ of $\rk$, and put $\eta'=(\chi')^n$. By \cite[Theorem 4.5]{JT26}, we have that
\[
\CP_{\pi_n, \chi'}\left(\kappa_{0_n,0_n}\otimes\kappa_{\eta'}\otimes 1\otimes\mathbf{o}_n\right)=1,
\]
which completes the proof of the lemma.
\end{proof}
    
    In general, we define a generator
    \[
    \kappa_{\mu,\nu}:=\jmath_{\mu,\nu}(\kappa_{0_n,0_n})\in\CH(\Pi_{\infty}),
    \]
where 
\[
\jmath_{\mu,\nu}:=\jmath_{\mu}\otimes\jmath_{\nu}:\CH(\pi_n)\to\CH(\Pi_{\infty})
\]
is the translation map defined in \cite[Proposition 1.1]{local}. 
	
    With respect to above choice of generators, we define a $\sigma$-linear isomorphism
	\begin{equation}\label{sigmaaA}
		\sigma:\CH(\Pi_{\infty})\to\CH({^{\sigma}\Pi_{\infty}})
	\end{equation}
	such that $\sigma(\kappa_{\mu,\nu})=\kappa_{{^{\sigma}\mu},{^{\sigma}\nu}}$.  
    From  \eqref{sigmaI}, \eqref{sigmao} and \eqref{sigmaaA}, we obtain a $\sigma$-linear isomorphism
    \begin{equation}
    \sigma:\CH(\Pi_{\infty},\chi_{\infty})\to\CH({^{\sigma}\Pi_{\infty}},{^{\sigma}\chi_{\infty}}).
    \end{equation}
    Define the normalized archimedean modular symbol
	\[
	{^{\sigma}\CP}_{\infty}^{\circ}:=\Omega_{\infty}({^{\sigma}\Pi},{^{\sigma}\chi})\cdot\CP_{{^{\sigma}\Pi_{\infty}},{^{\sigma}\chi_{\infty}}}:\CH({^{\sigma}\Pi_{\infty}},{^{\sigma}\chi_{\infty}})\to\C,
	\]
	where the constant $\Omega_{\infty}({^{\sigma}\Pi},{^{\sigma}\chi})\in\C^{\times}$ is defined as in \eqref{omegainfty}. When $\sigma$ is the identity map, we omit $\sigma$ and simply write  ${\CP}_{\infty}^{\circ}$ for ${^{\sigma}\CP}_{\infty}^{\circ}$. 
    
    Then we have the following archimedean period relation.

    \begin{prp} \label{prop:APR} 
    The diagram
	\begin{equation}\label{archimedeanperiodrelation}
		\begin{CD}
			\CH(\Pi_{\infty},\chi_{\infty}) @>\CP_{\infty}^{\circ}>>\C\\
			@V\sigma VV @VV\sigma V\\
			\CH({^{\sigma}\Pi_{\infty}},{^{\sigma}\chi_{\infty}}) @>{^{\sigma}\CP}_{\infty}^{\circ}>>\C
		\end{CD}
	\end{equation}
    commutes.
    \end{prp}

    \begin{proof}
        By Lemma \ref{sigmakappaeta}, we have that
    \[
    \sigma(\kappa_\eta) =\frac{\sigma(\delta_{\infty}(\eta)^{n-1})}{\delta_{\infty}(^\sigma\eta)^{n-1}}\cdot \kappa_{^\sigma\eta}.
    \]
    Recall from \eqref{ind-trans} that $\jmath_{\eta}(\kappa_{\eta^{\circ}})=\kappa_{\eta}$. Then the proposition follows easily from \cite[Theorem 1.6]{local}, with a corrigendum on arXiv (see also \cite[Proposition 7.3]{LLS}). 
   In particular, the constants in Case $(\pm)$ of \cite[Theorem 1.6 (a)]{local} should be corrected as
\[
\begin{aligned}
          	c_{\xi,\chi}'&:=\prod_{i+k\leq n}(\varepsilon_{\psi_\R}\cdot\mathrm{i})^{\mu_i^{\iota}+\mu_i^{\overline{\iota}}+\nu_k^{\iota}+\nu_k^{\overline{\iota}}+\chi_{\iota}+\chi_{\overline{\iota}}-1},\\
			\varepsilon'_{\xi,\chi}&:=\prod_{i=1}^n(-1)^{\mu_i^{\overline{\iota}}+n(\nu_i^{\overline{\iota}}-1)+\chi_{\overline{\iota}}}
			\cdot\prod_{i>k,\,i+k\leq n}(-1)^{\mu_i^{\iota}+\nu_k^{\iota}+\mu_{n+1-i}^{\overline{\iota}}+\nu_{n+1-k}^{\overline{\iota}}+\chi_{\iota}+\chi_{\overline{\iota}}}.
		\end{aligned}
\]
    \end{proof}

	\subsection{Global period relations}\label{sec7.5}
We prove Theorem \ref{mainthm} when $\chi\in\RB(\mu,\nu)^+$ using modular symbols.

\subsubsection{Deligne periods associated to critical algebraic Hecke characters}\label{751}

Recall that an algebraic Hecke character $\xi:\rk^\times \backslash \A^\times \rightarrow \C^\times$ is said to be critical if $s=0$ is not a pole of $\RL(s,\xi_{\infty})$ or $\RL(1-s,\xi_\infty^{-1})$, where $\xi_\infty$ is the archimedean part of $\xi$. Note that if $\xi$ is critical, then $^\sigma\xi$ is critical for every $\sigma\in\mathrm{Aut}(\C)$. 

Suppose that $\RE$ is a number field and  $\widetilde{\xi}:\A^{\times}\to\RE^{\times}$ is a locally constant homomorphism that is algebraic in the following sense: the restriction $\widetilde{\xi}|_{\rk^\times}: \rk^\times \rightarrow \RE^\times$ (uniquely) extends to an algebraic homomorphism 
    \[ 
   w_{\widetilde{\xi}}=\{w_{\widetilde{\xi},\rho}\}_{\rho\in \CE_\RE}: (\rk\otimes_\mathbb Q \C)^\times=(\C^\times)^{\CE_{\rk} }\rightarrow (\RE\otimes_\mathbb Q \C)^\times =(\C^\times)^{\CE_{\RE}}
   \] 
    of complex algebraic tori. For every  $\rho\in\CE_{\RE}$, we have an algebraic Hecke character ${^{\rho}\widetilde{\xi}}: \rk^\times\backslash \A^\times\rightarrow \C^\times$ defined by
        \[ 
        {^{\rho}\widetilde{\xi}}(x)=((\rho\circ \widetilde{\xi})(x))\cdot w_{\widetilde{\xi},\rho}^{-1}(x_\infty)
        \]
        for all $x=(x_\infty,x_{\mathrm{f}})\in \A^\times$, where $x_\infty\in\rk_{\infty}$ and $x_\mathrm f\in \A_{\mathrm{f}}$ are  respectively the archimedean and non-archimedean components of $x$. See \cite[Chapter 0]{S88} for more details. If $\widetilde{\xi}$ is critical, in the sense that ${^\rho\widetilde{\xi}}$ is critical for all $\rho\in\CE_{\RE}$, then we have the Deligne period
 \[
 \mathrm{c}^+(\widetilde{\xi})=\{\mathrm{c}^+(\widetilde{\xi})_{\rho}\}_{\rho\in\CE_{\RE}}\in(\RE\otimes_{\BQ}\C)^{\times}=(\C^{\times})^{\CE_{\RE}},
 \] 
 which is uniquely defined up to multiplication by scalars in $\RE^{\times}$. See \cite{D, S88, Ku24} for the precise definition. Deligne's conjecture for $\widetilde{\xi}$ (which is completely proved in \cite{Ku24}) asserts that
\begin{equation}\label{deligneA}
    \frac{\mathbb{L}(0,\widetilde{\xi}_{\mathrm{f}})}{\mathrm{c}^+(\widetilde{\xi})}\in\RE\subset\RE\otimes_{\BQ}\C,\qquad \text{where}\quad \mathbb{L}(0,\widetilde{\xi}_{\mathrm{f}}):=\{\RL(0,{^\rho\widetilde{\xi}}_{\mathrm{f}})\}_{\rho\in\CE_{\RE}}.
\end{equation}
Here $\widetilde{\xi}_{\mathrm{f}}$ is the non-archimedean part of $\widetilde{\xi}$.

\begin{lemp}\label{lemcplus}
    There is a map
    \be\label{cplus}
    \mathrm{c}^+: \{\textrm{critical algebraic Hecke characters $\rk^\times \backslash \A^\times \rightarrow \C^\times$} \}\rightarrow \C^\times
    \ee
    such that 
    \[
    \frac{\{\mathrm{c}^+(^\rho \widetilde{\xi})\}_{\rho\in\CE_{\RE}}}{\mathrm{c}^+( \widetilde{\xi})} \in \RE^\times\subset (\RE\otimes_\BQ\C)^\times,
    \]
    for every number field $\RE$ and every  locally constant homomorphism $\widetilde{\xi}:\A^{\times}\to\RE^{\times}$ that is algebraic and critical.
    Moreover, such a map is unique up to multiplication by rational scalars in the following sense: if $\tilde c^+$ is another map that has the same property as $c^+$, then the map
    \[
    \frac{\widetilde{\mathrm{c}}^+}{\mathrm{c}^+}: \{\textrm{critical algebraic Hecke characters $\rk^\times \backslash \A^\times \rightarrow \C^\times$} \}\rightarrow \C^\times
    \]
    is $\mathrm{Aut}(\C)$-equivariant. 
\end{lemp}
\begin{proof}
    This follows from the fact that  the Deligne periods are preserved under extensions of the  coefficient fields. We omit the details.  
\end{proof}

For every critical algebraic Hecke character $\xi: \rk^\times \backslash \A^\times \rightarrow \C^\times$, we also call the number $\mathrm{c}^+(\xi)\in \C^\times$ given by \eqref{cplus} the Deligne period of $\xi$. Denote 
\[\mathrm{c}(\xi):=\mathrm{c}^+(\xi)\cdot\RL(0,\xi_{\infty}).
\]
We remark that \eqref{deligneA} is equivalent to saying that 
\begin{equation}\label{deligneB}
    \sigma\left(\frac{\RL(0,\xi)}{\mathrm{c}(\xi)}\right)=\frac{\RL(0,{^\sigma\xi})}{\mathrm{c}({^\sigma}\xi)},\qquad \text{for every }\sigma\in\mathrm{Aut}(\C).
\end{equation}

\subsubsection{Whittaker periods}\label{sec:whittaker}

We recall the definition of Whittaker periods in \cite[Sections 3.2, 3.3]{Ma05} and \cite{dual} (see also \cite{RS08}). 

We know that there exists a family $\{\varpi_{\Pi}\}_{\Pi\in\CR^{\rm coh}}$ of $\GL_n(\A_{\mathrm{f}})\times\GL_n(\A_{\mathrm{f}})$-equivariant linear embeddings
\begin{equation}\label{varpisigma}
\varpi_{\Pi}:{\RI^{\rm whi}_{\mathrm{f}}(\Pi)}\to\RH^{b_n}(\CX_n,F_{{\mu}}^\vee)\otimes\RH_{\mathrm{c}}^{b_n}(\CX_n,F_{\nu}^\vee)
\end{equation}
where $F_\mu\otimes F_\nu$ is the coefficient system of $\Pi$,
such that the diagram (\cite[(71)]{LLS})
\[
\begin{CD}
    \RI^{\rm whi}_{\mathrm{f}}(\Pi) @>\varpi_{\Pi}>>\RH^{b_n}(\CX_n,F_{{\mu}}^{\vee})\otimes\RH_{\mathrm{c}}^{b_n}(\CX_n,F_{\nu}^{\vee})\\
    @V\sigma VV @VV\sigma V\\
    \RI^{\rm whi}_{\mathrm{f}}(^\sigma\Pi) @>\varpi_{^\sigma\Pi}>>\RH^{b_n}(\CX_n,F_{{^{\sigma}\mu}}^\vee)\otimes\RH_{\mathrm{c}}^{b_n}(\CX_n,F_{^{\sigma}\nu}^\vee)
\end{CD}
\]
commutes for every $\sigma\in\mathrm{Aut}(\C)$ and $\Pi\in\CR^{\rm coh}$. See \cite[Lemma 3.15 and Theorem 3.19]{Clo}, \cite[Lemma 6.4]{LLS} and \cite[Theorem 4.13]{dual}.

For every $\Pi\in\CR^{\rm coh}$, the Whittaker period $\Omega(\Pi)\in\C^{\times}$ is defined to be the unique scalar such that the diagram
\begin{equation}\label{def:whittaker}
\begin{CD}
   \RI^{\rm whi}_{\mathrm{f}}(\Pi) @>\mathrm{id}\otimes\kappa_{{\mu},{\nu}}>> \RI^{\rm whi}_{\mathrm{f}}(\Pi)\otimes\mathcal{H}(\Pi_{\infty})\\
    @V\Omega(\Pi)VV @VV\imath_{\Pi}\circ\imath_{\mathrm{can}}V\\
   \RI^{\rm whi}_{\mathrm{f}}(\Pi) @>\varpi_{\Pi}>>\RH^{b_n}(\CX_n,F_{{^{\sigma}\mu}}^\vee)\otimes\RH_{\mathrm{c}}^{b_n}(\CX_n,F_{\nu}^\vee)
\end{CD}
\end{equation}
commutes. By Lemma \ref{lem:wd}, $\kappa_{\mu,\nu}$ and thus $\Omega(\Pi)$ are independent of the CM type $\{\iota'_v\}_{v\mid \infty}$. 

As a consequence of the definition, the diagram
	\begin{equation}\label{diagrampi}
		\begin{CD}
			\CH(\Pi_{\infty})\otimes\RI_{\mathrm{f}}^{\rm whi}(\Pi) @>\imath_{\mathrm{can}}>> \CH(\Pi) @>\Omega(\Pi)^{-1}\cdot\imath_{\Pi}>>\RH^{b_n}(\CX_n,F_{\mu}^{\vee})\otimes\RH_{\mathrm{c}}^{b_n}(\CX_n,F_{\nu}^{\vee})\\
			@V\sigma VV @VV\sigma V @VV\sigma V\\
			\CH({^{\sigma}\Pi_{\infty}})\otimes{\RI^{\rm whi}_{\mathrm{f}}(^\sigma\Pi)} @>\imath_{\mathrm{can}}>>\CH({^{\sigma}\Pi}) @>\Omega({^{\sigma}\Pi})^{-1}\cdot\imath_{{^{\sigma}\Pi}}>>\RH^{b_n}(\CX_n,F_{{^{\sigma}\mu}}^{\vee})\otimes\RH_{\mathrm{c}}^{b_n}(\CX_n,F_{{^{\sigma}\nu}}^{\vee})
		\end{CD}
	\end{equation}
	commutes for every $\sigma\in\mathrm{Aut}(\C)$. Here the middle vertical arrow is defined such that the left square is commutative. 

Let $\Pi\in\CR^{\rm coh}$. Denote by $\Pi^{\vee}$ the contragredient of $\Pi$. Note that $\Pi^{\vee}\in\CR^{\rm coh}$, and 
\[
^\sigma\Pi^{\vee}:=(^\sigma\Pi)^{\vee}={^\sigma(\Pi^{\vee})} \quad\textrm{ for every $\sigma\in\mathrm{Aut}(\C)$}.
\]
We record the following proposition proved in \cite[Theorem 1.3]{dual}.

\begin{prp}
    For every $\sigma\in\mathrm{Aut}(\C)$, it holds that
\begin{equation}\label{whittakergalois}
    \sigma\left(\frac{\Omega(\Pi)}{\mathcal{G}(\omega_{\Pi})^{n-1}\cdot\Omega(\Pi^{\vee})}\right)=\frac{\Omega(^\sigma\Pi)}{\mathcal{G}(\omega_{^\sigma\Pi})^{n-1}\cdot\Omega(^\sigma\Pi^{\vee})}.
\end{equation}
\end{prp}

\subsubsection{Proof of Theorem \ref{mainthm} for $\chi\in\RB(\mu,\nu)^+$}
    
	For every $\sigma\in\mathrm{Aut}(\C)$, set 
	 \[
      {^{\sigma}\CL}:= \frac{\RL(0,{^{\sigma}\Pi}\times{^{\sigma}\chi})}{\RL(0,\omega_{^{\sigma}\Pi}\cdot{^{\sigma}\chi}^n)\cdot\CG({^{\sigma}\chi})^{\frac{n(n-1)}{2}}\cdot\Omega_{\infty}({^{\sigma}\Pi},{^{\sigma}\chi})\cdot\Omega({^{\sigma}\Pi})}.
    \]	
  	The symbol $\sigma$ will be omitted from the notation if it is the identity element of $\Aut(\C)$. 
       By \eqref{deligneB}, we have that
    \[
\sigma\left(\frac{\RL(0,\omega_{\Pi}\chi^n)}{\mathrm{c}(\omega_{\Pi}\chi^n)}\right) = \frac{\RL(0,\omega_{^{\sigma}\Pi}\cdot{^{\sigma}\chi}^n)}{\mathrm{c}(\omega_{^\sigma\Pi}\cdot{^\sigma\chi^n})}
    \]
    for every $\sigma\in\mathrm{Aut}(\C)$. Therefore, we only need to prove that 
	\begin{equation}
		\label{thmsigma'}
		\sigma(\CL)={^{\sigma}\CL},
	\end{equation}
	which is equivalent to \eqref{thmsigma}. Then \eqref{thmquotient} follows from \eqref{thmsigma} immediately.
	
	We form the following diagram
	\begin{equation}
		\label{diagram}
		\xymatrixrowsep{0.3in}
		\xymatrixcolsep{0.35in}
		\xymatrix{
			\mathcal{H}(\Pi,\chi)_{\mathrm{loc}} \ar[rd]^{\sigma} \ar[ddd]_{\imath_{\mathrm{can}}} \ar[rrr]^{  \CP^{ \circ}_{\mathrm{f}}\otimes  \CP^{ \circ}_{\infty} } 
			&&& \BC \ar^\sigma[rrd] \ar^{ \sigma}[rrd] \ar[ddd]^{\mathcal{L}}|!{[dll];[dr]}\hole & \\
			&\mathcal{H}({^{\sigma}\Pi},{^{\sigma}\chi})_{\mathrm{loc}}  \ar[ddd]_{\imath_{\mathrm{can}}}  \ar[rrrr]^(0.4){ {^{\sigma}\CP}^{\circ}_{\mathrm{f}}\otimes  {^{\sigma}\CP}^{ \circ}_{\infty} } 
			&&& & \BC  \ar[ddd]^{^{\sigma}\mathcal{L}}\\
			&&& & \\
			\mathcal{H}(\Pi,\chi)_{\mathrm{glob}} \ar[rd]_{\sigma} \ar[rrr]^(0.6){\Omega(\Pi)^{-1}\cdot\CP} |!{[ur];[dr]}\hole
			&& & \BC \ar[drr]^{\sigma}  & \\
			& 	\mathcal{H}({^{\sigma}\Pi},{^{\sigma}\chi})_{\mathrm{glob}}\ar[rrrr]^{\Omega({^{\sigma}\Pi})^{-1}\cdot{}^\sigma\CP} 
			&&& & \BC.
		}
	\end{equation}
	Then \eqref{thmsigma'} is the commutativity of the right square of \eqref{diagram}. Since the top horizontal arrow is surjective by the non-vanishing of the archimedean modular symbol (\cite[Theorem 1.6(b)]{local}), it suffices to show the commutativity of the other squares of the cubic diagram. The front and back squares are commutative by \eqref{frontback}. The left square commutes by  \eqref{diagrameis} and \eqref{diagrampi}. The bottom square is commutative by \cite[Proposition 3.14]{Rag1}, \eqref{maindiagram} and \eqref{diagrampi}. The top square is commutative by the non-archimedean period relation \eqref{nonarchimedeanperioerelation} and the archimedean period relation \eqref{archimedeanperiodrelation}. We therefore obtain \eqref{thmsigma'} and complete the proof of Theorem \ref{mainthm} for $\chi\in\RB(\mu,\nu)^+$.

\subsection{The functional equation}\label{sec7.6}
We finally complete the proof of Theorem \ref{mainthm} for all $\chi\in\RB(\mu,\nu)$ using the functional equation.
Assume that $\chi \in \RB(\mu,\nu)\setminus \RB(\mu,\nu)^+$. Then $\chi^{-1}\abs{\cdot}_\A\in \RB(\check\mu,\check\nu)^+$, where
\[
\check\mu:=\{(-\mu_n^{\iota},-\mu_{n-1}^{\iota},\dots,-\mu_1^{\iota})\}_{\iota\in\CE_{\rk}},\quad\check\nu:=\{(-\nu_n^{\iota},-\nu_{n-1}^{\iota},\dots,-\nu_1^{\iota})\}_{\iota\in\CE_{\rk}}.
\]
We have already proved that
\begin{equation}
    \label{chi-1}
    \begin{aligned}
        &\sigma\left(\frac{\RL(0,\Pi^{\vee}\times\chi^{-1}\abs{\cdot}_{\A})}{\mathrm{c}(\omega_{\Pi}^{-1}\chi^{-n}\abs{\cdot}^n_{\A})\cdot\mathcal{G}(\chi^{-1})^{\frac{n(n-1)}{2}}\cdot\Omega_{\infty}(\Pi^{\vee},\chi^{-1}\abs{\cdot}_{\A})\cdot\Omega(\Pi^{\vee})}\right)\\
        =\ &\frac{\RL(0,{^\sigma\Pi^{\vee}}\times{^\sigma\chi^{-1}}\abs{\cdot}_{\A})}{\mathrm{c}(\omega_{^\sigma\Pi}^{-1}\,{^\sigma\chi^{-n}\abs{\cdot}_{\A}^n})\cdot\mathcal{G}(^\sigma\chi^{-1})^{\frac{n(n-1)}{2}}\cdot\Omega_{\infty}(^\sigma\Pi^{\vee},{^\sigma\chi^{-1}}\abs{\cdot}_{\A})\cdot\Omega(^\sigma\Pi^{\vee})}
    \end{aligned}
\end{equation}
for every $\sigma\in\mathrm{Aut}(\C)$.

Recall the functional equation (see e.g. \cite{JPSS83} and  \cite[Lecture 9]{Cog})
\begin{equation}\label{FE}
\RL(s,\Pi\times\chi)=\varepsilon(s,\Pi\times\chi)\cdot\RL(1-s,\Pi^{\vee}\times\chi^{-1}),
\end{equation}
where $\varepsilon(s,\Pi\times\chi)$ is the epsilon factor whose infinite part is (see \cite{Kn91} and \cite[Lemma 3.13]{local}) 
\be \label{epsilon}
\begin{aligned}
    \varepsilon_{\infty}(\Pi\times\chi):=&\,\prod_{v|\infty}\varepsilon(0,\Pi_v\times\chi_v,\psi_v)\\
      =&\,\prod_{v|\infty}\prod_{i+k\leq n} (\varepsilon_{\Pi,\chi,v}\cdot {\rm i})^{\mu_i^{\iota_v}+\nu_k^{\iota_v}+\chi_{\iota_v}  - \mu^{\bar\iota_v}_{n+1-i} - \nu^{\bar\iota_v}_{n+1-k} -\chi_{\bar\iota_v}+ n+1-i -k } \\
    & \cdot \prod_{v\mid\infty} \prod_{i+k\geq n+1}(\varepsilon_{\Pi,\chi,v}\cdot {\rm i})^{- \mu_i^{\iota_v} - \nu_k^{\iota_v}- \chi_{\iota_v}  + \mu^{\bar\iota_v}_{n+1-i} + \nu^{\bar\iota_v}_{n+1-k} +\chi_{\bar\iota_v}+ i+k-n-1 }. 
\end{aligned}
\ee
Note that 
\[
\varepsilon_{\infty}(^\sigma\Pi\times {^\sigma\chi})=\varepsilon_{\infty}(\Pi\times\chi)\qquad\text{for every }\sigma\in\mathrm{Aut}(\C).
\]

\begin{lemp} \label{lem:epsilon}
    For every $\sigma\in\mathrm{Aut}(\C)$, 
   it holds that  \begin{equation}
    \label{epsilongalois}
    \sigma\left(\frac{\varepsilon(0,\Pi\times\chi)}{|\delta_{\rk}|^{\frac{n}{2}}\cdot\mathcal{G}(\omega_{\Pi}\chi^n)^n\cdot\varepsilon_{\infty}(\Pi\times\chi)}\right)=\frac{\varepsilon(0,{^\sigma\Pi}\times{^\sigma\chi})}{|\delta_{\rk}|^{\frac{n}{2}}\cdot\mathcal{G}(\omega_{^\sigma\Pi}\,{^\sigma\chi}^n)^n\cdot\varepsilon_{\infty}(^\sigma\Pi\times {^\sigma\chi})}.
\end{equation}

\end{lemp}

\begin{proof}
This can be proved by using the same argument as in the proof of \cite[Lemma 3.1]{C24}. 
\end{proof}

On the other hand, by \cite[Proposition 5.1, Section 8]{D} (see also \cite[Section 5.4.1]{R-imaginary}) we have
\begin{equation}\label{deligneFE}
\begin{aligned}
&\sigma\left(\frac{\mathrm{c}(\omega_{\Pi}\chi^n)}{\mathrm{c}(\omega_{\Pi}^{-1}\chi^{-n}\abs{\cdot}_{\A}^n)\cdot|\delta_{\rk}|^{\frac{1}{2}}\cdot(\mathrm{i}^{\frac{[\rk:\BQ]}{2}}\cdot\Delta_{\rk})^{1-n}\cdot\mathcal{G}(\omega_{\Pi}\chi^n)\cdot\varepsilon_{\infty}(\omega_{\Pi}\chi^n)}\right)\\
=\,&\frac{\mathrm{c}(\omega_{^\sigma\Pi}{^\sigma\chi^n})}{\mathrm{c}(\omega_{^\sigma\Pi}^{-1}\,{^\sigma\chi^{-n}\abs{\cdot}_{\A}^n})\cdot|\delta_{\rk}|^{\frac{1}{2}}\cdot(\mathrm{i}^{\frac{[\rk:\BQ]}{2}}\cdot\Delta_{\rk})^{1-n}\cdot\mathcal{G}(\omega_{^\sigma\Pi}\,{^\sigma\chi^n})\cdot \varepsilon_{\infty}(\omega_{^\sigma\Pi}\,{^\sigma\chi^n})}
\end{aligned}
\end{equation}
for every $\sigma\in\mathrm{Aut}(\C)$, where
\be \label{cinf}
\varepsilon_{\infty}(\omega_{^\sigma\Pi}{^\sigma\chi^n})=\varepsilon_{\infty}(\omega_{\Pi}\chi^n):= \prod_{v\mid\infty} \varepsilon(0, \omega_{\Pi_v}\chi_v^n,\psi_v) = \prod_{v|\infty} (\varepsilon_{\Pi,\chi,v}\cdot \mathrm{i})^{\eta_{\bar\iota_v} - \eta_{\iota_v}}.
\ee

By noting that $\varepsilon_{\Pi^\vee,\chi^{-1}\abs{\cdot}_\A,v} = -\varepsilon_{\Pi,\chi,v}$ for every $v|\infty$ and
\[
\frac{\delta_{\infty}(\omega_{\Pi}^{-1}\chi^{-n}\abs{\cdot}^n_{\A})}{\delta_{\infty}(\omega_{\Pi}\chi^n)}=\prod_{\tau\in\CE_{\rk_1}}\delta(\rk/\rk_1,\tau)^{n-1}=\nabla_{\rk}^{n-1},
\]
we find that 
\be \label{omegainf}
\begin{aligned}
    \frac{\Omega_{\infty}(\Pi^{\vee},\chi^{-1}\abs{\cdot}_{\A})}{\Omega_{\infty}(\Pi,\chi)}=\nabla_{\rk}^{n-1}\cdot\, &\prod_{v|\infty}\prod_{i+k\leq n}(\varepsilon_{\Pi,\chi,v}\cdot\mathrm{i})^{-\sum_{\iota\in\CE_{\rk_v}}(\mu_i^{\iota}+\nu_k^{\iota}+\chi_{\iota})+1}\\
    \cdot&\prod_{v|\infty}\prod_{
    i+k\geq n+2}(\varepsilon_{\Pi,\chi,v}\cdot\mathrm{i})^{\sum_{\iota\in\CE_{\rk_v}}(\mu_i^{\iota}+\nu_k^{\iota}+\chi_{\iota}-1)+1}\\
    \cdot\,&\prod_{v|\infty}\prod_{i=1}^n(-1)^{\sum_{\iota\in\CE_{\rk_v}}\left(\mu_i^{\iota}+n\nu_i^{\iota}+\chi_{\iota}\right)+1}.
\end{aligned}
\ee
Using \eqref{epsilon}, \eqref{cinf} and \eqref{omegainf}, it is straightforward to verify that
\be \label{inf}
\frac{\varepsilon_\infty(\Pi \times\chi)}{\varepsilon_{\infty}(\omega_{\Pi}{\chi^n})} \cdot \frac{\Omega_{\infty}(\Pi^{\vee},\chi^{-1}\abs{\cdot}_{\A})}{\Omega_{\infty}(\Pi,\chi)} = \nabla_{\rk}^{n-1}\cdot \prod_{v|\infty}\prod_{i=1}^n(-1)^{\sum_{\iota\in\CE_{\rk_v}}\left(\mu_i^{\iota}+n\nu_i^{\iota}+\chi_{\iota}\right)+1}.
\ee

Finally, by combining \eqref{whittakergalois}, \eqref{epsilongalois}, \eqref{deligneFE},  and \eqref{inf}, we have that
\[
\begin{aligned}
    &\sigma\left(\varepsilon(0,\Pi\times\chi)\cdot\frac{\mathrm{c}(\omega_{\Pi}^{-1}\chi^{-n}\abs{\cdot}^n_{\A})\cdot\mathcal{G}(\chi^{-1})^{\frac{n(n-1)}{2}}\cdot\Omega_{\infty}(\Pi^{\vee},\chi^{-1}\abs{\cdot}_{\A})\cdot\Omega(\Pi^{\vee})}{\mathrm{c}(\omega_{\Pi}\chi^n)\cdot\mathcal{G}(\chi)^{\frac{n(n-1)}{2}}\cdot\Omega_{\infty}(\Pi,\chi)\cdot\Omega(\Pi)}\right)\\
    =\, &\varepsilon(0,^\sigma\Pi\times{^\sigma\chi})\cdot\frac{\mathrm{c}(\omega_{^\sigma\Pi}^{-1}\,{^\sigma\chi^{-n}\abs{\cdot}_{\A}^n})\cdot\mathcal{G}(^\sigma\chi^{-1})^{\frac{n(n-1)}{2}}\cdot\Omega_{\infty}(^\sigma\Pi^{\vee},{^\sigma\chi^{-1}}\abs{\cdot}_{\A})\cdot\Omega(^\sigma\Pi^{\vee})}{\mathrm{c}(\omega_{{^{\sigma}\Pi}}\, {}^{\sigma}\chi^n)\cdot
					\CG({^{\sigma}\chi})^{\frac{n(n-1)}{2}}\cdot\Omega_{\infty}({^{\sigma}\Pi},{^{\sigma}\chi})\cdot\Omega({^{\sigma}\Pi})}.
\end{aligned}
\]
By the functional equation \eqref{FE}, we obtain \eqref{thmsigma} from \eqref{chi-1}, which completes the proof of Theorem \ref{mainthm}.

	\section*{Acknowledgments}
	The authors are grateful to the anonymous referee for the very helpful suggestions regarding the history of the  period relation problem. D. Liu was supported in part by National Key R \& D Program of China No. 2022YFA1005300 and National Natural Science Foundation of China No. 12526208.  B. Sun was supported in part by National Key R \& D Program of China No. 2022YFA1005300  and New Cornerstone Science Foundation.

\end{document}